\newtheorem{theorem}{Theorem}[section]
\newtheorem{claim}[theorem]{Claim}
\newtheorem{question}[theorem]{Question}
\newtheorem{corollary}[theorem]{Corollary}
\newtheorem{lemma}[theorem]{Lemma}
\newtheorem{proposition}[theorem]{Proposition}
\theoremstyle{definition}
\newtheorem{definition}[theorem]{Definition}
\newtheorem{example}[theorem]{Example}
\newtheorem{remark}[theorem]{Remark}
\newtheorem{assumption}[theorem]{Assumption}
\numberwithin{equation}{subsection}
\DeclareMathAlphabet{\mathpgoth}{OT1}{pgoth}{m}{n}
\DeclareMathAlphabet{\mathpzc}{OT1}{pzc}{m}{it}
\newcommand{\R}{\mathbb{R}}
\newcommand{\Z}{\mathbb{Z}}
\newcommand{\V}{\mathbb{V}}
\newcommand{\Kur}{\mathscr{K}}
\newcommand{\be}{\begin{enumerate}}
\newcommand{\ee}{\end{enumerate}}
\newcommand{\op}{\operatorname}
\newcommand{\bs}{\boldsymbol}
\newcommand{\arr}{\overrightarrow}
\newcommand{\rr}{\mathbb R}
\newcommand{\hh}{\mathbb H}
\DeclareMathOperator{\coker}{coker}
\DeclareMathOperator{\ind}{Ind}
\DeclareMathOperator{\crit}{Crit}
\newcommand{\h}{\bm H} %% t-dependent Hamiltonian
\newcommand{\cleanIntersect}{\Cap}
\newcommand{\grad}[2]{\text{grad}_{({#1},{#2})}}
\newcommand{\hess}[1]{\text{Hess}_{#1}}
\newcommand{\tildeM}{\widetilde{\mathcal M}}
\newcommand{\M}{\mathcal M}
\newcommand{\morsedbar}{\mathcal L}
\newcommand{\closeToBreaking}[3]{\mathcal N^{#1}({#2},{#3})}
\newcommand{\closeToBreakings}[2]{\mathcal N^{#1}(\compactSubset_{\iT{#2}})}
\newcommand{\trans}[1]{\tau_{#1}}
\newcommand{\bump}[2]{{\beta}_{#1}^{#2}}
\newcommand{\bPlus}[1]{\bump{+}{#1}}
\newcommand{\bMinus}[1]{\bump{-}{#1}}
\newcommand{\umap}[2]{u_{#1, #2}}
\newcommand{\uPlus}[1]{\umap{+}{#1}}
\newcommand{\uMinus}[1]{\umap{-}{#1}}
\newcommand{\Perturb}{\psi}
\newcommand{\perturb}[2]{{\psi}_{#1, #2}}
\newcommand{\pPlus}[1]{\perturb{+}{#1}}
\newcommand{\pMinus}[1]{\perturb{-}{#1}}
\newcommand{\iT}[1]{#1}
\newcommand{\indexTuple}[1]{(\MakeLowercase{#1}_1,\MakeLowercase{#1}_2,\dots, \MakeLowercase{#1}_{n(\iT{#1})})}
\newcommand{\SIC}[1]{\arr{#1}}
\newcommand{\contraction}[2]{{#1}/{#2}}
\newcommand{\compactSubset}{\mathtt K}
\newcommand{\openSubset}{U}
\newcommand{\obstructionSectionS}{\mathfrak{s}}
\newcommand{\obstructionSection}[1]{\mathfrak{s}_{\iT{#1}}}
\newcommand{\obstructionBundle}{\mathcal O}
\newcommand{\productBundle}[1]{\mathbb{O}_{\iT{#1}}}
\newcommand{\obstructionBundlePM}{\obstructionBundle_{+-}}
\newcommand{\parametrizedModF}[2]{\widetilde{\mathcal M}(#1,#2)}
\newcommand{\modF}[2]{{\mathcal M}(#1,#2)}
\newcommand{\gluableSet}{\mathcal V}
\newcommand{\source}{\mathtt{s}}
\newcommand{\target}{\mathtt{t}}
\newcommand{\kChart}[1]{V_{\iT{#1}}}
\newcommand{\chartData}[1]{\mathcal{C}_{\iT{#1}}}
\newcommand{\productModuliSpaces}[2]{{\mathbb U}_{\iT{#1}}^{#2}}
\newcommand{\interiorKChart}[1]{V_{#1}}
\newcommand{\breakingBase}[1]{B_{{#1}}}
\newcommand{\breakingBundle}[1]{B^\sharp_{{#1}}}
\newcommand{\lOc}{\widetilde{\mathcal{O}}}
\newcommand{\lavg}{f}
\newcommand{\kur}{\mathcal K}
\newcommand{\kuranishiSection}[1]{\sigma_{\iT{#1}}}
\newcommand{\kuranishiSectionBold}{\boldsymbol{\sigma}}
\newcommand{\productCharts}[1]{\mathbb{V}_{\iT{#1}}}
\newcommand{\productChartsIntersection}[2]{\mathbb{V}_{\iT{#1}\iT{#2}}}
\newcommand{\lN}{\mathcal N}
\newcommand{\lO}{\mathcal O}
\newcommand{\lW}{\mathcal W}
\newcommand{\lH}{\mathcal H}
\newcommand{\dPlus}{D_+}
\newcommand{\dMinus}{D_-}
\newcommand{\labitem}[2]{%
\def\@itemlabel{#1}
\item
\def\@currentlabel{#1}\label{#2}}
\begin{document}
\title[Computable, obstructed Morse homology for clean intersections]
{Computable, obstructed Morse homology for clean intersections}

\author{Erkao Bao}
\address{School of Mathematics, University of Minnesota, Minneapolis, MN 55455}
\email{bao@umn.edu} \urladdr{https://erkaobao.github.io/math/}

\author{Ke Zhu}
\address{Department of Mathematics and Statistics, Minnesota State University Mankato, Mankato, MN 56001}
\email{ke.zhu@mnsu.edu}
\keywords{Morse homology, Morse-Smale condition, transversality, clean intersection, obstruction bundle gluing, semi-global Kuranishi structure, iterated gluing = simultaneous gluing}

\subjclass[2010]{Primary 53D40; Secondary 53D58.}
\thanks{Erkao Bao is supported by NSF Grants DMS-2404529.}

\begin{abstract} 
In this paper, we develop a method to compute the Morse homology of a manifold when descending manifolds and ascending manifolds intersect {\em cleanly}, but not necessarily transversely.

While obstruction bundle gluing \cite{hutchings2009gluing} is a computable tool to handle non-transverse intersections, it has only been developed for specific cases. In contrast, most virtual techniques apply to general cases but lack computational efficiency. 
To address this, we construct {\em minimal semi-global Kuranishi structures} for the moduli spaces of Morse trajectories, which generalize obstruction bundle gluing while maintaining its computability feature, in the spirit of semi-global Kuranishi structures \cite{bao2015semi}. Through this construction, we obtain iterated gluing equals simultaneous gluing.
\end{abstract}

\maketitle

\setcounter{tocdepth}{2}

% \tableofcontents
% \todo{remove table of contents}

\section{Introduction}
In Morse homology or Floer homology, it is typical to perturb the metric or the almost complex structure so that the moduli space of Morse trajectories or pseudo-holomorphic curves is transversely cut out. However, such perturbations are sometimes  insufficient to achieve transversality. In the Floer case, this insufficiency is due to the presence of bubbles or multiple covers of pseudo-holomorphic curves. In the Morse case, when studying equivariant Morse homology, it is well-known that equivariant transversality cannot always be achieved when the Morse function is unstable (see \cite{bao2024morse}).

To address this issue, several abstract perturbation schemes have been developed, including: Kuranishi structures \cite{fukaya1999arnold, fooo, ishikawa2021smooth}, implicit atlases \cite{pardon2019contact}, Kuranishi atlases \cite{mcduff2018fundamental}, polyfolds \cite{hofer2007general}, semi-global Kuranishi structures \cite{bao2015semi}, global Kuranishi charts \cite{abouzaid2021arnold}, D-manifolds \cite{joyce2012d-manifold}, and the virtual fundamental cycles \cite{li1998virtual, liu1998floer, ruan1999virtual}. 
It is fairly straightforward to construct a global Kuranishi chart for Morse homology, for instance, by turning on perturbations of several vector fields near each critical point. However, these perturbations often excessively thicken the moduli spaces, making computation infeasible. On the other hand, obstruction bundle gluing, originally defined in \cite{hutchings2009gluing}, has the advantage of being computable, though it was developed for a special case. We identify the condition for generalizing obstruction bundle gluing as clean intersection and extend the obstruction bundle gluing in the spirit of a semi-global Kuranishi structure, while maintaining its computability.

When the moduli spaces of Morse trajectories are transversely cut out, the compactified moduli space is a smooth manifold with corners. Such a corner structure is constructed in \cite{wehrheim2012smooth}. The boundary/corner charts are given by gluing broken trajectories consisting of at least three smooth pieces. The corner structure requires the compatibility condition: iterated gluing = simultaneous gluing. As the gluing map is defined in a non-canonical way, simultaneous gluing and iterated gluing are generally not the same, but their difference vanishes as the gluing parameters go to infinity. In \cite{wehrheim2012smooth} the gluing map was meticulously constructed so that simultaneous gluing = iterated gluing. Kenji Fukaya suggested that one could modify the gluing maps by averaging them using the idea of the center of mass in Riemannian geometry. This idea is very likely to lead to a very simple alternative proof of this result. When the moduli spaces are not transverse, not every broken trajectory can be glued into a smooth trajectory; usual gluing is upgraded into obstruction bundle gluing. The simultaneous gluing $\neq$ iterated gluing problem persists and becomes more chanllenging as the dimensions of the charts vary. The issue is inconvenient but not unavoidable if one only cares about dimension 0 or 1 moduli spaces (see \cite{bao2015semi}). The averaging method can be hard to adapt to the obstruction bundle gluing case. We shrink the charts so that different charts only intersect only when the broken curves in one chart are eitheralso  broken curves of the other chart or arise from gluing some broken curves in the other chart. We then modify the gluing maps inductively.

When the moduli space $\M$ of Morse trajectories is cleanly cut out, $\M$ is a smooth manifold, and over it, there exists an obstruction bundle $\obstructionBundle$, which is the cokernel bundle. In this case, there is no need to construct a thickened moduli space as is usually done in Kuranishi structures. Our goal is to perturb the moduli space so that it is cut out transversely. By choosing a section $\sigma$ of the obstruction bundle such that $\sigma$  intersects the zero section of $\obstructionBundle$ transversely, we replace a large open subset of the moduli space $\M$ with $\sigma^{-1}(0)$. This is sufficient for the ``interior" of $\M$. For the boundary of $\M$, the perturbed moduli space needs to exhibit some boundary/corner structure, provided by the products of the relevant lower energy strata. The boundary chart is obtained by gluing broken trajectories. In particular, unlike the transverse case, not all broken trajectories can be glued to smooth ones. 

Indeed, on the space of broken trajectories, together with the space of gluing parameters, there is a bundle formed by taking the direct sum of the obstruction bundles of the moduli spaces of smooth pieces. Over this bundle, there exists a section, called the obstruction section $\mathfrak s$, such that $\mathfrak s^{-1}(0)$ is the space of broken trajectories (and gluing parameters) that can be glued to smooth curves. In other words, through the inverse of the gluing map, the curves that are close to breaking are mapped to the boundary charts (spaces of broken trajectories) as $\obstructionSectionS^{-1}(0)$. The obstruction section, originally constructed by \cite{hutchings2009gluing}, has the advantage that its leading terms can be explicitly expressed. This plays an important role since we perturb $\obstructionSectionS$ to obtain a transverse section in the boundary chart. The obstruction section $\obstructionSectionS$ must satisfy some compatibility conditions if there are multiple boundary charts. Specifically:

Suppose there is a broken trajectory consisting of three trajectories $u_1$ and $u_2$, and $u_3$. Suppose that the two trajectories $u_1, u_2$ glue with the gluing parameter $T_1$ to a curve $u_4 = G_{12}(u_1, T_1, u_2)$, where $G_*$ denotes the gluing map. This means that $\obstructionSectionS_{12}(u_1, T_1, u_2)=0$, where $\obstructionSectionS_{12}$ is the obstruction section on the space of broken trajectories containing $(u_1, T_1, u_2)$. Suppose also that $u_4$ and $u_3$ glue with the gluing parameter $T_2$ to a smooth trajectory $u_6 = G_{43}(u_4, T_2, u_3)$, which implies that $\obstructionSectionS_{43}(u_4, T_2, u_3) = 0$.
Then, one can ask whether $(u_1, u_2, u_3)$ can be simultaneously glued with gluing parameters $T_1$ and $T_2$, i.e., whether $\obstructionSectionS_{123}(u_1, T_1, u_2, T_2, u_3) = 0$, and if so, whether the glued curve $G_{123}(u_1, T_1, u_2, T_2, u_3)$ is the same as the iteratively glued curve $G_{43}(u_4, T_2, u_3) = G_{43}(G_{12}(u_1, u_2, T_1), T_2, u_3)$.

To answer this question, we interpret the obstruction section $\obstructionSectionS$ in the Kuranishi setting. A Kuranishi chart is the data $(V,E,\morsedbar,\phi)$, where $V$ is the thickened moduli space, $E \to V$ is a vector bundle, $\morsedbar$ is a section of $E \to V$ (in the contex of morse homology, $\morsedbar$ is given by \eqref{eqn: morsedbar}), and $\phi: \morsedbar^{-1}(0) \to \M$ is a homeomorphism onto its image. Suppose $(V_\pm, E_\pm, \morsedbar_\pm,\phi_\pm)$ are two other charts of lower energy that can be glued to elements in $V$. Indeed, we have a gluing map $(G,G^\sharp)$ which is a bundle isomorphism:
\begin{equation*}
\begin{tikzcd}
    E_+\oplus E_- \arrow[r, "G^\sharp"] \arrow[d] & E \arrow[d] \\
    V_+ \times V_- \times [R,\infty) \arrow[r, "G"] \arrow[u, bend left, "\morsedbar_+ \times \morsedbar_-"]&  V \arrow[u, bend left, "\morsedbar"],
\end{tikzcd}
\end{equation*}
where $R>0$ is a large number. The obstruction section $\obstructionSectionS$ can be equivalently defined to be $\morsedbar_+ \times \morsedbar_- - (G^\sharp)^{-1} \circ \morsedbar \circ G$. In the case of clean intersection, one can choose $E_\pm$ to be the obstruction bundles, $V_\pm$ to be open subsets in $\M_\pm$, and $\morsedbar_\pm = 0$. With this definition, as long as the gluing maps $(G, G^\sharp)$ satisfy the condition iterated gluing equals simultaneous gluing, the obstruction sections are compatible:
\begin{align*}
    & \obstructionSectionS_{123}(u_1, T_1, u_2, T_2, u_3) \\
    = &  (G_{123}^\sharp)^{-1} \circ \morsedbar \circ G_{123} (u_1, T_1, u_2, T_2, u_3) \\
    = &  (G_{12}^\sharp \times \op{id})^{-1} \circ (G_{43}^\sharp)^{-1} \circ \morsedbar \circ G_{43} \circ (G_{12}\times \op{id}) (u_1, T_1, u_2, T_2, u_3) \\
    = &  (G_{12}^\sharp \times \op{id})^{-1} \circ (G_{43}^\sharp)^{-1} \circ \morsedbar \circ G_{43} (u_4, T_2, u_3) \\
    = & (G_{12}^\sharp \times \op{id})^{-1} \circ \obstructionSectionS_{43} (u_4, T_2, u_3).
\end{align*}
If $\obstructionSectionS_{43} (u_4, T_2, u_3) = 0$, then $\obstructionSectionS_{123}(u_1, T_1, u_2, T_2, u_3) = 0.$

We provide the simplest non-trivial calculation of the Morse homology of an upright torus as a proof of concept (Example~\ref{example: upright torus}). The calculation relies only on the information of the moduli spaces, obstruction bundles, and obstruction sections. In particular, it does not require extra knowledge of the geometry of the manifolds.

\subsection{Outline}

In Section~\ref{section: clean intersection}, we recall the definition of clean intersection and examine its basic properties. We show that, given any regular (which is generic) $1$-parameter family of Riemannian metrics, for any moment in the family, index = 0 moduli spaces of Morse trajectories are cleanly cut out.

Section~\ref{section: gluing} focuses on obstruction bundle gluing within the clean intersection setting. We describe the gluing of broken trajectories composed of two pieces and introduce the obstruction section $\obstructionSectionS$. Here, we also state the gluing theorem for this setting and outline the strategy to perturb the obstruction section to subsequently perturb moduli spaces of Morse trajectories. This approach is illustrated by computing the Morse homology of an upright torus.

In Section~\ref{section: proof of theorem}, we prove the gluing theorem presented in Section~\ref{section: gluing} and delve into the simultaneous gluing of broken trajectories with multiple (two or more) pieces. 

Section~\ref{section: definition of minimal semi-global} introduces minimal semi-global Kuranishi structures for clean intersections and their perturbation sections. Using these, we perturb the moduli spaces.

In Section~\ref{section: morse homology}, we discuss the orientation of the perturbed moduli spaces and revisit the concepts of Morse homology.

Finally, in Section~\ref{section: construction of semi-global}, we construct a minimal semi-global Kuranishi structure. During this process, we modify the gluing maps to ensure that iterated gluing equals simultaneous gluing. We also construct a perturbation of the moduli space by choosing a perturbation section.

\section*{Acknowledgement}
The first author thanks Conan Leung for hosting him at the Institute of Mathematical Sciences, Chinese University of Hong Kong, where part of this paper was completed. The second author thanks the School of Mathematics, University of Minnesota, for providing an excellent research environment during his sabbatical year visit. He also thanks Kaoru Ono for helpful discussions.

\section{Clean Intersection and Obstruction Bundles}\label{section: clean intersection}
A more general concept than transverse intersection is the notion of clean intersection.

\begin{definition}[Clean intersection]
Two submanifolds $X$ and $Y$ of $Z$ are said to intersect cleanly, denoted as $X \cleanIntersect Y$, if $X \cap Y$ is a submanifold of $Z$ and for any $z \in X \cap Y$, $T_z(X \cap Y) = T_z X \cap T_z Y$.
\end{definition}

It is clear that transverse intersection implies clean intersection.

\begin{example}
Let $f: \R \to \R$ be the function $f(x) = x^2$. The graph of $f$ does not intersect $\R \oplus \{0\}$ cleanly.
\end{example}

\begin{example}
If the manifold $X$ is a submanifold of a manifold $Y$, and $Y$ is a submanifold of a manifold $Z$, then $X$ intersects $Y$ cleanly (in $Z$).
\end{example}

More generally,
\begin{example}\label{example:transverse-intersection}
Suppose that the manifolds $X$ and $Y$ intersect cleanly in $W$ (for instance, $X \pitchfork_W Y$), and $W$ is a submanifold of a manifold $Z$. 
Then $X$ and $Y$ intersect cleanly in $Z$.
If the codimension of $W$ inside $Z$ is $\geq 1$, then $X$ and $Y$ do not intersect transversely in $Z$.
\end{example}

\begin{proposition}[\cite{hormander2007theanalysis}]\label{prop:local-coordinate}
Suppose $X$, $Y$, and $X \cap Y$ are submanifolds of $Z$. The following two statements are equivalent:
\begin{enumerate}
    \item $X$ and $Y$ intersect cleanly.
    \item For any $z \in X \cap Y$, there exists a coordinate chart around $z$ with respect to which $X$ and $Y$ are linear subspaces.
\end{enumerate}
\end{proposition}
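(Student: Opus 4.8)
The plan is to prove the two implications separately. Both statements are local and pertain to a single point, so I would fix $z \in X \cap Y$, pass to an arbitrary coordinate chart around it, and assume $Z$ is an open neighbourhood of $0$ in $\R^n$ with $z = 0$. Write $V = T_0 X$, $W = T_0 Y$, $P = V \cap W$, and $\ell = \dim P$. Since $X \cap Y$ is (by the standing hypothesis) a submanifold contained in both $X$ and $Y$, a curve in $X \cap Y$ through $0$ is a curve in each of $X$ and $Y$, so $T_0(X \cap Y) \subseteq P$ automatically; hence, at the fixed point $0$, the clean intersection condition $T_0(X\cap Y) = P$ is equivalent to the purely numerical statement $\dim(X \cap Y) = \ell$. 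I will use it in this form.

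The implication $(2) \Rightarrow (1)$ I would dispatch first, as it is immediate: if in some chart near $0$ we have $X = V'$ and $Y = W'$ for linear subspaces $V', W' \subseteq \R^n$, then $X \cap Y = V' \cap W'$ near $0$ is again a linear subspace, so $T_0(X \cap Y) = V' \cap W' = T_0 X \cap T_0 Y$, and this equality transports back to the original chart since it is diffeomorphism invariant.

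For $(1) \Rightarrow (2)$ I would move through a short chain of coordinate changes. \emph{Straighten $X$:} because $X$ is a submanifold, a slice chart reduces us to the case where $X = V$ is a coordinate subspace; this replaces $W$ by its image under a linear isomorphism, but being a diffeomorphism it preserves the clean intersection hypothesis. \emph{Adapt the coordinates:} choose a linear splitting $\R^n = P \oplus A \oplus B \oplus C$ with $V = P \oplus A$, $W = P \oplus B$, and $C$ a complement of $V + W$, and use the resulting coordinates $(p,a,b,c)$, so that $X = \{b = 0,\ c = 0\}$ and $W = \{a = 0,\ c = 0\}$. \emph{Write $Y$ as a graph:} since $Y$ is a submanifold with $T_0 Y = W$, the projection $(p,a,b,c) \mapsto (p,b)$ restricts to a local diffeomorphism on $Y$ at $0$, so after shrinking $Y = \set{(p,\, h_a(p,b),\, b,\, h_c(p,b))}{(p,b)\ \text{small}}$ with $h_a, h_c$ smooth, $h(0) = 0$, $dh_0 = 0$. \emph{Use cleanness:} intersecting the graph with $X$ forces $b = 0$ and $h_c(p,0) = 0$, so $X \cap Y$ near $0$ lies inside the $\ell$-dimensional submanifold $\set{(p, h_a(p,0), 0, 0)}{p\ \text{small}}$; since $\dim(X \cap Y) = \ell$ by the clean hypothesis, $X \cap Y$ is open in that submanifold and therefore $h_c(\cdot, 0) \equiv 0$ near $0$. \emph{Straighten $Y$:} set $\Psi(p,a,b,c) = (p,\, a - h_a(p,b),\, b,\, c - h_c(p,b))$; since $dh_0 = 0$ this is a diffeomorphism near $0$, it carries $Y$ onto $W$, and because $h_c(\cdot,0) \equiv 0$ it carries $X = \{b=0,\ c=0\}$ onto itself. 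In the composed chart both $X$ and $Y$ are linear subspaces, which is $(2)$.

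I expect the only genuinely non-formal step to be ``use cleanness'', i.e.\ the vanishing $h_c(\cdot,0) \equiv 0$: if one knows merely that $X \cap Y$ is a submanifold, it can have dimension strictly less than $\ell$ — precisely what happens for the graph of $x \mapsto x^2$ meeting $\R \oplus \{0\}$ — and then $h_c(\cdot,0)$ need not vanish, the map $\Psi$ fails to preserve $X$, and indeed no chart linearizes both. Everything else is routine bookkeeping: slice charts, the graph representation of a submanifold over its tangent space, and the observation that each preliminary coordinate change, being a diffeomorphism, carries the clean intersection hypothesis along. As a variant, one could insert after ``straighten $X$'' a further coordinate change straightening $X \cap Y$ inside $X = V$; cleanness then identifies its tangent space with $P$, so this arranges $X \cap Y = P$ near $0$ and makes $h(p,0) \equiv 0$ automatic.
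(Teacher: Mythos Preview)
Your proof is correct and complete. The paper does not supply its own proof of this proposition; it simply cites H\"ormander, so there is nothing to compare against. Your argument --- straighten $X$ via a slice chart, write $Y$ as a graph over its tangent space in adapted coordinates $(p,a,b,c)$, use the clean intersection hypothesis to force $h_c(\cdot,0)\equiv 0$, and then apply the shear $\Psi$ to linearize $Y$ while preserving $X$ --- is the standard route and is carried out cleanly. The identification of the one genuinely non-formal step (the vanishing of $h_c(\cdot,0)$, which fails exactly in the parabola counterexample the paper mentions) is accurate.
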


Unlike transverse intersections, linear algebra is not sufficient to check for clean intersections. Instead, we have the following lemma.

\begin{lemma}\label{lemma:clean-intersection-criterion}
Suppose $X$ and $Y$ are two submanifolds of a manifold $Z$. Assume that $\dim(T_z X \cap T_z Y)$ locally constant in $z \in X \cap Y$. Then the following two statements are equivalent:
\begin{enumerate}
    \item $X$ and $Y$ intersect cleanly.
    \item For each $z \in X \cap Y$ and a submanifold $N$ of $Z$ containing $z$ with codimension $\dim X - \dim(T_z X \cap T_z Y)$ such that:
    \begin{enumerate}
        \item $T_z X + T_z N = T_z Z$.
        \item $Y \cap U \subset N$ for some neighborhood $U \subset Z$ of $z$.
    \end{enumerate}
    Then, one can shrink the neighborhood $U$ if necessary, so that $X \cap Y \cap U = X \cap N \cap U$.
\end{enumerate}
\end{lemma}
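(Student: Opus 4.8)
The plan is to derive both implications from two elementary facts. First, transversality is open along an intersection: condition~(a) says that $X$ and $N$ meet transversally at $z$, hence transversally at every intersection point in a neighborhood of $z$, so that there $X \cap N$ is an embedded submanifold of $Z$ of dimension $d := \dim(T_zX \cap T_zY)$ — its codimension being $\op{codim}X + \op{codim}N = (\dim Z - \dim X) + (\dim X - d) = \dim Z - d$ — with $T_{z'}(X \cap N) = T_{z'}X \cap T_{z'}N$ at each such $z'$. Second, a submanifold of dimension $d$ contained in another submanifold of dimension $d$ is open in it, because the inclusion is then a smooth injective immersion between manifolds of the same dimension; hence two such submanifolds sharing a point coincide on a neighborhood of that point.

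For $(1) \Rightarrow (2)$: assume $X \cleanIntersect Y$, and let $z \in X \cap Y$ and $N$ be a submanifold as in~(2). By cleanness $X \cap Y$ is a submanifold with $T_z(X \cap Y) = T_zX \cap T_zY$, hence of dimension $d$ near $z$; by~(b), $X \cap Y \cap U \subseteq X \cap N \cap U$, the right side being a $d$-dimensional submanifold after shrinking $U$ (first fact). The second fact then yields $X \cap Y \cap U = X \cap N \cap U$ after a further shrinking of $U$, which is the conclusion of~(2).

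For $(2) \Rightarrow (1)$: the crucial step is to exhibit, for each $z \in X \cap Y$, at least one submanifold $N$ meeting the requirements of~(2), so that its conclusion applies to $N$. I will build $N$ by linear algebra in a chart $\psi$ centred at $z$ in which $Y$ is a linear coordinate subspace: transporting everything by $d\psi_z$, set $V = T_zX \cap T_zY$, choose a complement $W'$ of $V$ inside $T_zY$ (so $W' \cap T_zX = 0$), extend $W'$ to a complement $C$ of $T_zX$ in $T_zZ$ — possible because $\dim W' = \dim Y - d \le \dim Z - \dim X$, equivalently $\dim(T_zX + T_zY) \le \dim Z$ — and let $N = \psi^{-1}(V \oplus C)$. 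One checks directly that $\op{codim}N = \dim X - d$; that $T_zY = V \oplus W' \subseteq V \oplus C$, so $Y \subseteq N$ near $z$, giving~(b); that $T_zX + (V \oplus C) = T_zZ$, giving~(a); and that $T_zX \cap (V \oplus C) = V = T_zX \cap T_zY$. Now~(2) gives $X \cap Y \cap U = X \cap N \cap U$ for some $U$, and by the first fact the right side is, near $z$, a submanifold of dimension $d$ whose tangent space at $z$ is $T_zX \cap T_zN = T_zX \cap T_zY$. Since $z$ was arbitrary, $X \cap Y$ is locally a submanifold with $T_z(X \cap Y) = T_zX \cap T_zY$ everywhere, i.e.\ $X \cleanIntersect Y$.

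I expect the construction of $N$ to be the main obstacle — not the linear algebra itself, but checking that all the dimension inequalities needed to run it hold, and pinning down where the hypothesis that $\dim(T_zX \cap T_zY)$ is locally constant enters: it is what makes the codimension $\dim X - d$ demanded in~(2) the same for all basepoints near $z$, so that the pointwise identities $X \cap Y = X \cap N$ assemble into ``$X \cap Y$ has pure dimension $d$ near $z$''. A minor point needing care is that, in the ``submanifold-inside-a-submanifold is open'' step, the inclusion $X \cap Y \hookrightarrow X \cap N$ is smooth precisely because $X \cap N$ is an embedded submanifold of $Z$.
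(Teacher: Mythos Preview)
Your proposal is correct and follows essentially the same approach as the paper. Both directions proceed identically: for $(1)\Rightarrow(2)$, both argue that $X\cap Y$ and $X\cap N$ are $d$-dimensional submanifolds with one contained in the other, hence locally equal; for $(2)\Rightarrow(1)$, both construct a specific $N$ meeting the hypotheses of~(2), apply~(2) to identify $X\cap Y$ locally with the transverse intersection $X\cap N$, and read off the tangent-space condition. The only difference is cosmetic: the paper builds $N$ by exponentiating a complementary subbundle $F\subset TZ|_Y$ with $F_z\oplus(T_zX+T_zY)=T_zZ$ using a Riemannian metric, whereas you build $N$ as a linear subspace in a chart straightening $Y$ --- both yield a submanifold containing $Y$ with $T_zN=T_zY\oplus(\text{complement of }T_zX+T_zY)$, so the constructions are interchangeable.
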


\begin{proof}
The submanifold $N$ can be constructed as follows:
This construction is carried out in a small neighborhood of $z$. Let $F$ be a subbundle of $TZ|_Y$ such that $F_z \oplus (T_z X + T_z Y) = T_z Z$ for all $z \in X \cap Y$. Define $N$ as a submanifold of $Z$ in a neighborhood of $Y$ such that $T_y N = F_y \oplus T_y Y$ for any $y \in Y$. $N$ can be constructed by taking the exponential map: $F \to Z$ with respect to some Riemannian metric. Then, $Y$ is a submanifold of $N$, and $X \pitchfork_Z N$.

Suppose that there exists a small neighborhood $U \subset Z$ of $Y$ such that $X \cap Y = X \cap N \cap U$. Then, since $X$ intersects $N \cap U$ transversely, $X \cap Y$ is a submanifold of $Z$. The statement that $T_z (X \cap Y) = T_z X \cap T_z Y$ for all $z \in X \cap Y$ follows from the fact that $T_z(X \cap Y) \subset T_z X \cap T_z Y$ and $\dim T_z(X \cap Y) = \dim T_z(X \cap N) = \dim X + \dim N - \dim Z = \dim T_z (X \cap Y)$.

Conversely, suppose $X$ and $Y$ intersect cleanly. For any $z \in X \cap Y, $ let $U \subset Z$ be a small neighborhood of $z$. Since $Y \subset N$, we have $X \cap Y \cap U \subset X \cap N \cap U$. Note that $\dim (X \cap Y \cap U) = \dim T_z (X \cap Y) = \dim (T_z X \cap T_z Y) = \dim T_z X + \dim T_z Y - \dim (T_z X + T_z Y) = \dim X + \dim N - \dim Z = \dim (X \cap N \cap U)$. Hence, the two manifolds $X \cap Y \cap U$ and $X \cap N \cap U$ coincide if $U$ is a sufficiently small neighborhood of $z$. 
\end{proof}

Suppose $M^n$ is a closed smooth manifold, $f: M \to \R$ is a Morse function, and $g$ is a Riemannian metric on $M$. 

For any $p, q \in \crit{f}$, consider the Banach manifold
\begin{equation}\label{eqn: tilde B}
\widetilde{\mathcal B} = \widetilde{\mathcal B}(p,q)=  \{u \in L_1^2(\R, M) ~|~ \lim_{s \to -\infty} u(s) = p, \lim_{s \to \infty} u(s) = q\},
\end{equation}
and the Banach bundle $\mathcal E \to \widetilde{\mathcal B}$ defined by
$\mathcal E_u = L^2(u^* TM).$
Here, we define $L_1^2(\R, M)$ simply by choosing a smooth embedding of $M$ into $\R^{N}$ for some sufficiently large integer $N$, and defining $L_1^2(\R, M)$ as the obvious subspace of $L_1^2(\R, \R^N)$.
Consider the section 
\begin{equation}\label{eqn: morsedbar}
\morsedbar = \frac{d}{ds} + \grad{f}{g}: \widetilde{\mathcal B} \to \mathcal E,
\end{equation}
where $\grad{f}{g}$ is the gradient vector field of $f$ with respect to $g$.
Let $\mathcal B = \mathcal B(p,q) = \widetilde{\mathcal B}(p,q)/\R$ with the $\R$ acting on the domains as translation. 
The $\R$-action induces an identification of the fibers of $\mathcal E$, and we denote the obtained vector bundle with identified fibers as $\mathcal E \to \mathcal B$ by abusing the notation.

The moduli space $\parametrizedModF{p}{q}$ is defined to be $\morsedbar^{-1}(0)$.
We define $\modF{p}{q} = \parametrizedModF{p}{q}/\R$.
For any $u \in \widetilde{\mathcal B}$, we denote by $D_u$ the linearized operator of $\morsedbar$ at $u$, i.e., $D_u = \Pi \circ d_u \morsedbar$, where $\Pi: T_{(u,\morsedbar(u))} \mathcal E = \mathcal E_u \oplus T_u \widetilde{\mathcal B} \to \mathcal E_u$ is the projection to the fiber with respect to the $L^2$-inner product induced by a metric $g_0$ on $M$. Note that $g_0$ does not have to agree with $g$. Later in Section~\ref{section: pregluing}, we choose $g_0$ be a flat metric around each critical points of $f$. Let $\nabla$ the Levi-Civita connection with respect to $g_0$. Then we have the explicit formula for $D_u$ as follows: for any $\xi \in C^\infty(u^* TM)$,
\[
D_u \xi = \nabla_s \xi + \hess{f}(u) \xi,
\]
where $\hess{f}(u)\xi(s) = \nabla_\xi \grad{f}{g}(u(s))$ is the Hessian of $f$.
Let 
\[
D_{u}^* = - \nabla_s  + \hess{f}: L_1^2(u^* TM) \to L^2(u^* TM)
\] be the adjoint operator of $D_u$ with respect to the $L^2$-norm. 
For any critical point $p\in \crit{f}$, we denote by $\ind p$ the Morse index of $p$, i.e., the dimension of the negative eigenspace of the Hessian of $f$ at $p$.
The following proposition is well-known. See for instance Theorem 3.2 in \cite{salamonMorse}.
\begin{proposition}
The operator $D_u$ is Fredholm for any $u \in \parametrizedModF{p}{q}$, and the Fredholm index $\op{ind}u := \dim \ker D_u - \dim \coker D_u = \ind p - \ind q$.
\end{proposition}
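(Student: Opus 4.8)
The plan is to identify the linearization $D_u$ with a model operator $\tfrac{d}{ds}+S(s)$ on the trivial bundle over $\R$, and then read off its index from the standard spectral-flow formula.

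First I would trivialize. The pullback $u^*TM\to\R$ is a vector bundle over a contractible base, hence trivial; trivializing it by $\nabla$-parallel transport along $u$ (with $\nabla$ the Levi-Civita connection of $g_0$) converts $\nabla_s$ into $\tfrac{d}{ds}$ and the endomorphism field $\hess{f}(u(\cdot))$ into a continuous path $s\mapsto S(s)\in\End(\R^n)$, $n=\dim M$. Thus $D_u$ is identified with $L=\tfrac{d}{ds}+S(s)\colon L_1^2(\R,\R^n)\to L^2(\R,\R^n)$, and $D_u^*$ with an operator of the shape $-\tfrac{d}{ds}+\widetilde S(s)$ having the same asymptotic behavior; in particular $\coker D_u\cong\ker D_u^*$, so it suffices to understand $L$.

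Next I would pin down the asymptotics of $S(s)$. Since $u\in\parametrizedModF{p}{q}$ is a genuine negative gradient trajectory and $p,q$ are nondegenerate critical points, $u(s)$ converges exponentially fast — with all derivatives — to $p$ as $s\to-\infty$ and to $q$ as $s\to+\infty$; hence $S(s)$ converges to endomorphisms $S_-,S_+$ representing $\hess{f}(p)$ and $\hess{f}(q)$ in the limiting frames. Because $\nabla$ is built from $g_0$ while the gradient is taken with respect to $g$, the limits $S_\pm$ need not be symmetric; however, in $g_0$-normal coordinates at the critical point one has $S_-=G^{-1}H$ with $G$ the matrix of $g$ and $H$ the coordinate Hessian of $f$, and the conjugate $G^{1/2}S_-G^{-1/2}=G^{-1/2}HG^{-1/2}$ is symmetric. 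By Sylvester's law of inertia, $S_-$ has real spectrum, is invertible (using nondegeneracy of $p$), and has exactly $\ind p$ negative eigenvalues; likewise $S_+$ is invertible with $\ind q$ negative eigenvalues. So both $S_\pm$ are hyperbolic.

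Finally I would invoke the classical theorem — Robbin--Salamon on the spectral flow and the Maslov index, or Theorem~3.2 of \cite{salamonMorse} cited above — that an operator $\tfrac{d}{ds}+S(s)$ on $\R$ with hyperbolic asymptotic endomorphisms $S_\pm$ is Fredholm, with index equal to the spectral flow of $S(\cdot)$, which in this asymptotically hyperbolic situation equals $\dim E^-(S_-)-\dim E^-(S_+)$, the difference of the dimensions of the negative eigenspaces. Together with the previous step this gives $\op{ind}u=\ind p-\ind q$; the sign is fixed by the scalar model, where $S_-<0<S_+$ yields a one-dimensional kernel and vanishing cokernel. The only real content lies in the asymptotic analysis of the third step — equivalently, the exponential convergence of gradient trajectories near nondegenerate critical points — and the one place to be slightly careful is the mismatch between $g$ and $g_0$, which is harmless because the quantity entering the index (the count of negative eigenvalues) is a conjugation invariant. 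One could instead avoid quoting the index theorem by deforming $(S(\cdot),u)$ through operators with fixed hyperbolic asymptotics to a split model and using homotopy invariance of the Fredholm index, but the essential ingredient is the same.
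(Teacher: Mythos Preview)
Your sketch is correct and lands on precisely the reference the paper invokes: the paper offers no proof of its own, declaring the result well-known and pointing to Theorem~3.2 of \cite{salamonMorse}. The trivialization and asymptotic analysis you outline are exactly the standard reduction underlying that theorem, so there is nothing to correct or compare.
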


In the definition of Morse homology, one usually assumes the Morse-Smale condition.

\begin{definition}[Morse-Smale]
The pair $(f,g)$ is said to be Morse-Smale if for all critical points $p,q$, $\parametrizedModF{p}{q}$ are transversely cut out, i.e., $D_u$ is surjective for all $u \in \parametrizedModF{p}{q}$.
\end{definition}

The Morse-Smale condition is equivalent to the condition that all stable manifolds and unstable manifolds intersect transversely according to the proof of Theorem 3.3 in \cite{salamonMorse}.

Fix a Morse function, the Smale theorem says that a generic metric $g$ makes the pair $(f,g)$ Morse-Smale. However, there are cases when we cannot perturb $g$ such as the equivariant case, and for the purpose of computation, we also do not want to perturb $g$. We relax the Morse-Smale condition to the following clean intersection condition.

For any $u \in \parametrizedModF{p}{q}$, let $\mathcal U \subset \widetilde{\mathcal B}$ be a small neighborhood of $u$ and $E \to \mathcal U$ be a finite-dimensional subbundle of $\mathcal E|_{\mathcal U} \to \mathcal U$, such that $\morsedbar$ is transverse to $E$, i.e., for any $u \in \mathcal U$, $\op{im} D_u + E_u = \mathcal E_u$. 
By the implicit function theorem, $V := \morsedbar^{-1}(E) \subset \mathcal U$ is a smooth manifold of dimension $\ind p - \ind q + \op{rank} E$.
Now $\morsedbar$ restricts to a section $\morsedbar|_V: V \to E|_V.$

\begin{definition}
We say $\parametrizedModF{p}{q}$ or $\modF{p}{q}$ is cleanly cut out if, for any $u \in \parametrizedModF{p}{q}$, 
there exists an open set $u \in \mathcal U \subset \parametrizedModF{p}{q}$, and $E\to \mathcal U$ as above, such that $\mathcal L|_V$ intersects the zero section of $E|_V$ cleanly.
\end{definition}

\begin{lemma}
The moduli space $\parametrizedModF{p}{q}$ is cleanly cut out if and only if the unstable submanifold $W^u(p)$ of $p$ and the stable manifold $W^s(q)$ of $q$ intersect cleanly.
\end{lemma}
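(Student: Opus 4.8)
The plan is to reduce each side of the equivalence to a statement of the form ``$X$ is a submanifold with the expected tangent space'', and then match the two via the time-zero evaluation map $\ev_0\colon u\mapsto u(0)$.

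First I would show that the auxiliary bundle $E$ in the definition of ``cleanly cut out'' plays no role. For $u\in\parametrizedModF{p}{q}$ and a finite-rank $E\to\mathcal U$ transverse to $\morsedbar$, the reduction $V=\morsedbar^{-1}(E)$ has $T_uV=D_u^{-1}(E_u)\supseteq\ker D_u$, the zero set of $\morsedbar|_V\colon V\to E|_V$ equals $\parametrizedModF{p}{q}\cap\mathcal U$, and the vertical derivative of $\morsedbar|_V$ at $u$ is $D_u|_{T_uV}$, whose kernel is $\ker D_u$ irrespective of $E$. Unwinding what it means for the section $\morsedbar|_V$ to meet the zero section of $E|_V$ cleanly then gives: $\parametrizedModF{p}{q}$ is cleanly cut out near $u$ iff, near $u$, it is a (finite-dimensional) submanifold of $\widetilde{\mathcal B}$ with $T_u\parametrizedModF{p}{q}=\ker D_u$; and this is independent of $(\mathcal U,E)$, since a submanifold of $\widetilde{\mathcal B}$ that lies inside $V$ with all tangent spaces inside $TV$ is a submanifold of $V$. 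On the other side, $W^u(p)\cleanIntersect W^s(q)$ means precisely that $W^u(p)\cap W^s(q)$ is a submanifold of $M$ with $T_x\!\left(W^u(p)\cap W^s(q)\right)=T_xW^u(p)\cap T_xW^s(q)$ at every point.

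Next I would record the standard facts: $\ev_0\colon\widetilde{\mathcal B}(p,q)\to M$ is a smooth submersion; it restricts to a bijection $\parametrizedModF{p}{q}\to W^u(p)\cap W^s(q)$, since a negative gradient trajectory from $p$ to $q$ is determined by, and reconstructed from, its value at $s=0$, which sweeps out $W^u(p)\cap W^s(q)$; and $d_u\ev_0$ carries $\ker D_u$ isomorphically onto $T_{u(0)}W^u(p)\cap T_{u(0)}W^s(q)$ (a solution of $D_u\xi=0$ on a half-line with prescribed value at $0$ decays at the end iff that value lies in the corresponding linearized invariant manifold; cf.\ the proof of Theorem~3.2 in \cite{salamonMorse}). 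To transfer ``submanifold with expected tangent'' from one side to the other I would construct, near $x_0:=u(0)$, a smooth local section $\mathcal G\colon O\to\widetilde{\mathcal B}(p,q)$ of $\ev_0$, on a neighborhood $O$ of $x_0$ in $M$: extend the smooth maps ``$x\mapsto$ backward trajectory through $x$'' on $W^u(p)$ and ``$x\mapsto$ forward trajectory through $x$'' on $W^s(q)$ to smooth maps on all of $O$ (valued in backward/forward $L^2_1$ paths asymptotic to $p$, resp.\ $q$), and concatenate the two halves at $s=0$, where both take the value $x$. Then $\mathcal G$ is a smooth section of $\ev_0$; for $x\in O$, $\mathcal G(x)\in\parametrizedModF{p}{q}$ iff $x\in W^u(p)\cap W^s(q)$; every solution near $u$ lies in $\mathcal G(O)$, because its restrictions to $(-\infty,0]$ and $[0,\infty)$ are the trajectory halves through its time-zero point (which is what $\mathcal G$ produces); and $d_{x_0}\mathcal G$ sends $T_{x_0}W^u(p)\cap T_{x_0}W^s(q)$ onto $\ker D_u$.

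Since a smooth section of a submersion is an embedding, $\mathcal G(O)$ is a submanifold of $\widetilde{\mathcal B}(p,q)$ and $\ev_0|_{\mathcal G(O)}\colon\mathcal G(O)\to O$ is a diffeomorphism carrying $\parametrizedModF{p}{q}\cap\mathcal U$ onto $W^u(p)\cap W^s(q)\cap O$ and $\ker D_u$ onto $T_{x_0}W^u(p)\cap T_{x_0}W^s(q)$. Hence $\parametrizedModF{p}{q}$ is, near $u$, a submanifold with tangent $\ker D_u$ iff $W^u(p)\cap W^s(q)$ is, near $x_0$, a submanifold with tangent $T_{x_0}W^u(p)\cap T_{x_0}W^s(q)$; letting $u$ range over $\parametrizedModF{p}{q}$, equivalently $x_0$ over $W^u(p)\cap W^s(q)$, and combining with the two reformulations above finishes the proof. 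I expect the main obstacle to be the third step: verifying that the concatenated section $\mathcal G$ is genuinely smooth into the Banach manifold $\widetilde{\mathcal B}(p,q)$ (smooth dependence of trajectory halves on initial conditions, together with the exponential asymptotic control needed to land in $L^2_1$ and inside a single chart) and that it captures all nearby solutions; the Banach-submanifold bookkeeping and the unwinding of ``cleanly cut out'' are routine.
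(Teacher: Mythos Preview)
Your approach is correct and takes a genuinely different route from the paper's. You first eliminate the auxiliary bundle $E$ by showing that ``cleanly cut out'' is equivalent to the $E$-independent statement that $\parametrizedModF{p}{q}$ is locally a submanifold with tangent space $\ker D_u$, and then transfer this to $W^u(p)\cap W^s(q)$ via a local section $\mathcal G$ of the evaluation $u\mapsto u(0)$. The paper instead keeps the auxiliary bundle in play: for a given $u$ it constructs a \emph{specific} subbundle $\mathcal N\subset\mathcal E$ spanned by vector fields $f_1,\dots,f_\ell$ whose values at $u(0)$ complement $T_{u(0)}W^u+T_{u(0)}W^s$ in $T_{u(0)}M$, checks $\morsedbar\pitchfork\mathcal N$, and then proves $\morsedbar^{-1}(\mathcal N)=\morsedbar^{-1}(0)$ locally by introducing a \emph{thickened stable manifold} $W^s_{\mathcal N}$ (initial points of solutions of the perturbed ODE $\morsedbar(\gamma)=\sum_i c_if_i(\gamma)$) and applying the clean-intersection criterion of Lemma~\ref{lemma:clean-intersection-criterion} to $W^u$, $W^s$, and $W^s_{\mathcal N}$ inside $M$. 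Your route is more symmetric---it handles both implications at once---and makes transparent that the choice of $E$ is immaterial; the paper's route stays entirely within finite dimensions once $\mathcal N$ is fixed, so it sidesteps the Banach-manifold smoothness verification for $\mathcal G$ that you correctly flag as the main technical cost, and the thickened-manifold construction foreshadows the thickened moduli spaces used throughout the rest of the paper.
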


\begin{proof}
Suppose that the unstable submanifold $W^u = W^u(p)$ intersects the stable manifold $W^s = W^s(q)$ cleanly. For any $u \in \parametrizedModF{p}{q}$, we show that $\morsedbar$ intersects the zero section cleanly near $u$.

Let $m_0 = u(0) \in W^s \cap W^u$, and let $U \subset M$ be an open neighborhood of $m_0$. Let $f_1, \dots , f_\ell$ be $\ell$ smooth vector fields supported in $U$ such that $\{f_1(m_0), \dots, f_\ell(m_0)\}$ are linearly independent, and 
\begin{equation}
    \op{span}\{f_1(m_0), \dots, f_\ell(m_0)\} \oplus (T_{m_0}W^u + T_{m_0} W^s) = T_{m_0} M.
\end{equation}

Let $\mathcal U \subset \widetilde{\mathcal B}$ be a neighborhood of $u$. We now define sections $\tilde f_i$ of $\mathcal E|_{\mathcal U}$. For any $v \in \mathcal U$, $\tilde f_i(v) \in L^2(v^* TM)$ such that $\tilde f_i(v)(s) = f_i(v(s))$. Let $\mathcal N = \op{span}\{\tilde f_1, \dots, \tilde f_k\}$ be a subbundle of $\mathcal E|_{\mathcal U}$.

\begin{claim}
$\morsedbar$ is transverse to $\mathcal N$ over $\mathcal U$, and $\morsedbar^{-1} (\mathcal N) = \morsedbar^{-1}(0) \cap \mathcal U$.
\end{claim}

The claim finishes the proof that the moduli space $\parametrizedModF{p}{q}$ is cleanly cut out.

\begin{proof}[Proof of claim]
Suppose that $\morsedbar$ is not transverse to $\mathcal N$ at $u$. Then there exists $0 \neq \eta \in \ker D^*_u$ such that $\eta \perp \mathcal N_u$. From the proof of Theorem 3.3 in \cite{salamonMorse}, we know $\eta(0) \notin T_{m_0} W^s + T_{m_0} W^u$, and hence $\langle \eta(0), f_i(m_0) \rangle \neq 0$ for some $i \in \{1, \dots, \ell\}$. This implies $\eta \not\perp \tilde f_i$ if $U$ is sufficiently small, which leads to a contradiction.

Now we show $\morsedbar^{-1} (\mathcal N) \subset \morsedbar^{-1}(0) \cap \mathcal U$. The set $\morsedbar^{-1}(\mathcal N)$ consists of $\gamma \in \mathcal B$ that satisfy 
\begin{equation} \label{eqn:thickened-ode}
    \morsedbar(\gamma)(s) = \sum_{i = 1}^\ell c_i f_i(\gamma(s)),
\end{equation}
where $c_i$ are constants. Let $W^s_{\mathcal N}$ be the thickened stable submanifold defined by 
\[
W^s_{\mathcal N} = \{x \in M ~|~ \lim_{t \to \infty} \gamma(t) = q, \gamma \text{ satisfies } \eqref{eqn:thickened-ode} \text{ and } \gamma(0) = x\}.
\]
Then $W^s \subset W^s_{\mathcal N}$, and $W^s_{\mathcal N} \cap U'$ is a smooth submanifold of dimension $n - \ind q + \ell$, where $U' \subset U$ is a neighborhood of $m_0$ such that $f_1(m), \dots, f_\ell(m)$ are linearly independent for all $m \in U'$. Moreover, $T_{m_0} W^u + T_{m_0} W^s_{\mathcal N} = T_{m_0} M$. Since $W^s$ intersects $W^u$ cleanly (by Lemma~\ref{lemma:clean-intersection-criterion}), $W^u \cap (W^s_{\mathcal N} \cap U') = W^u \cap (W^s \cap U')$. This implies $\morsedbar^{-1} (\mathcal N) \subset \morsedbar^{-1}(0) \cap \mathcal U$.
\end{proof}

The other direction of the statement--that the moduli space $\parametrizedModF{p}{q}$ being cleanly cut out implies the stable and unstable manifolds intersect cleanly--can be proved analogously. We leave this proof to the readers.
\end{proof}

\begin{remark}[Terminology]
    Instead of saying $\morsedbar$ is transverse to a neighborhood of the zero section of the sub-bundle $\mathcal N \subset \mathcal E$, we simply say $\morsedbar$ is transverse to $\mathcal N$.
\end{remark} 

In this paper, we assume the following:
\begin{assumption}
All moduli spaces of Morse trajectories are cleanly cut out.
\end{assumption}

We have the following example.
Let $\{g_\tau\}_{\tau \in [0,1]}$ be a 1-parameter family of Riemannian metrics. Consider the moduli space $\M = \coprod_{\tau \in [0,1]}\widetilde \M_\tau$, where $\widetilde{\M}_\tau = \widetilde{\M}_{f, g_\tau}(p,q)$ for some $p,q \in \crit{f}$.
Consider the smooth Banach bundle $\mathcal E \to \widetilde{\mathcal B}$, 
where $\widetilde{\mathcal B} = \coprod_\tau \widetilde{\mathcal B}_\tau$ and $\mathcal E = \coprod_\tau \mathcal E_\tau$.
Let $\morsedbar: \widetilde{\mathcal B} \to \mathcal E$ be the section defined by $\morsedbar|_{\widetilde{\mathcal B}_\tau} = \morsedbar_\tau: \widetilde{\mathcal B}_\tau \to \mathcal E_\tau$, where $\morsedbar_\tau = \frac{d}{ds} + \grad{f}{g_\tau}$ and $\grad{f}{g_\tau}$ is the gradient vector field of $f$ with respect to the metric $g_\tau$. 

Suppose $\ind p = \ind q$. 
We say the family $\{(f, g_\tau)\}_\tau$ is {\em regular} if the section $\morsedbar:\widetilde{\mathcal B} \to \mathcal E$ is transverse to the zero section. %\todo{how to generalize the following result}

\begin{lemma} \label{lemma: generic family}
Suppose that $\{(f, g_\tau)\}_\tau$ is regular and that $\ind p = \ind q$. Then the moduli space $\widetilde{\M}_\tau$ is cleanly cut out for each $\tau$.
\end{lemma}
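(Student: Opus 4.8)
The plan is to reduce the statement to the previously established equivalence (the Lemma characterizing when $\parametrizedModF{p}{q}$ is cleanly cut out in terms of the stable/unstable manifolds intersecting cleanly), applied fiberwise in the parameter $\tau$, and to extract the required clean-intersection data from the regularity hypothesis on the total section $\morsedbar:\widetilde{\mathcal B}\to\mathcal E$. First I would unwind what regularity gives us: since $\morsedbar$ is transverse to the zero section on the total space $\widetilde{\mathcal B}=\coprod_\tau\widetilde{\mathcal B}_\tau$, the total moduli space $\M=\coprod_\tau\widetilde\M_\tau=\morsedbar^{-1}(0)$ is a smooth manifold, and because $\op{ind}u=\ind p-\ind q=0$ on each slice, the parametrized moduli space has dimension $1$ (the $[0,1]$-direction). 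Equivalently, inside the closed Banach manifold $\widetilde{\mathcal B}$ fibered over $[0,1]$, the zero-set of $\morsedbar$ is a smooth curve transverse to the fibers except where the projection to $[0,1]$ has a critical point.

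Next, I would recast the clean-cut-out condition for a fixed $\tau_0$. For $u\in\widetilde\M_{\tau_0}$, the linearized operator of $\morsedbar_{\tau_0}$ is $D_u$, and the linearization of the total section $\morsedbar$ at $u$ is $D_u$ together with the $\partial_\tau$-direction: explicitly $d_u\morsedbar(\xi,t) = D_u\xi + t\cdot v(u)$, where $v(u) := \partial_\tau\big|_{\tau_0}\grad{f}{g_\tau}(u) \in \mathcal E_u$ is the infinitesimal variation of the gradient vector field. Transversality of $\morsedbar$ says $\op{im}D_u + \R\cdot v(u) = \mathcal E_u$ for every $u\in\M$. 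Since $\op{ind}u=0$ and $\ker D_u$, $\coker D_u$ are finite-dimensional, surjectivity of $D_u$ would be the Morse–Smale case; the content of clean-cut-out is precisely what happens when $\coker D_u\neq 0$. The key observation is that $\R\cdot v(u)$ is one-dimensional, so transversality of $\morsedbar$ forces $\dim\coker D_u \le 1$, and moreover $\coker D_u\neq 0$ exactly at the (isolated) critical points of the projection $\M\to[0,1]$. Away from those, $D_u$ is already surjective and the Morse–Smale argument applies; at such a point $\tau_0$ the cokernel is exactly one-dimensional. I would then take $E\to\mathcal U$ to be the rank-one subbundle spanned by a section extending a vector that, together with $\op{im}D_u$, spans $\mathcal E_u$ — concretely one may take the analogue of the $\tilde f_i$ construction from the previous lemma with $\ell=1$, choosing $f_1$ supported near $u(0)$ with $f_1(u(0))\notin T_{u(0)}W^u(p)+T_{u(0)}W^s(q)$. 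With this choice $\morsedbar_{\tau_0}$ is transverse to $E$, $V=\morsedbar_{\tau_0}^{-1}(E)$ is a smooth manifold of dimension $\op{rank}E = 1$, and $\morsedbar_{\tau_0}|_V: V\to E|_V$ is a section of a line bundle over a $1$-manifold.

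The heart of the argument is to verify that $\morsedbar_{\tau_0}|_V$ meets the zero section cleanly, i.e. that $\morsedbar_{\tau_0}^{-1}(0)\cap\mathcal U = \widetilde\M_{\tau_0}\cap\mathcal U$ is a smooth submanifold of $V$ with the expected tangent space $\ker(d\, \morsedbar_{\tau_0}|_V)$ at each point. For this I would argue as in the proof of the preceding lemma: $\morsedbar_{\tau_0}^{-1}(E)$ corresponds to broken-ODE solutions $\morsedbar_{\tau_0}(\gamma) = c\,f_1(\gamma)$, defining a thickened stable manifold $W^s_{\mathcal N}$ with $W^s(q)\subset W^s_{\mathcal N}$, $W^s_{\mathcal N}$ transverse to $W^u(p)$, and $W^s_{\mathcal N}\cap W^u(p) = W^s(q)\cap W^u(p)$ on a small neighborhood; the last equality is where cleanness of the intersection $W^u(p)\cleanIntersect W^s(q)$ (equivalently, the dimension count forcing the two smooth submanifolds of equal dimension to coincide) is used. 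But to invoke that lemma I must first know $W^u(p)$ and $W^s(q)$ intersect cleanly, which is exactly what I must now derive from regularity of the family. I expect this to be the main obstacle: one has to show that the one-parameter family being transverse to $0$ implies, at each $\tau_0$, that $\dim(T_{m}W^u(p)\cap T_m W^s(q))$ is locally constant along $W^u(p)\cap W^s(q)$ in $M$ (using that the excess $\dim\coker D_u \le 1$ is governed by a single global parameter, together with upper-semicontinuity of $\dim\coker$), and that the tangent spaces then match — i.e. that the intersection is actually clean rather than merely of locally constant excess dimension. Once this local-constancy is in hand, Lemma~\ref{lemma:clean-intersection-criterion} produces the transverse model $N$ (here $W^s_{\mathcal N}$ plays that role), cleanness of $W^u(p)\cap W^s(q)$ follows, and then the earlier Lemma gives that $\widetilde\M_{\tau_0}$ is cleanly cut out, completing the proof.
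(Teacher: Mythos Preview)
Your setup is correct up to and including the observation that regularity forces $\dim\coker D_u\le 1$ and that the non-surjective locus consists of the critical points of the projection $\M\to[0,1]$. But at the point you yourself flag as ``the main obstacle'' there is a genuine gap, and the route you sketch for closing it (establish local constancy of the excess dimension, then invoke Lemma~\ref{lemma:clean-intersection-criterion} with $N=W^s_{\mathcal N}$) is circular: to apply that lemma you must verify $W^u\cap W^s_{\mathcal N}\cap U = W^u\cap W^s\cap U$, and your only justification for that equality is the cleanness you are trying to prove. Local constancy of $\dim(T_mW^u\cap T_mW^s)$ alone does not yield this; you need an independent reason why the thickened intersection contains no extra solutions.

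The missing idea is the free $\R$-action by domain translation, together with the correct choice of the rank-one subbundle. The paper takes $N$ to be spanned by $Y_u=\partial_\tau\,\grad{f}{g_\tau}(u)$ rather than by a localized bump section $\tilde f_1$. This choice matters: $Y$ is $\R$-equivariant, so $\morsedbar_\tau^{-1}(N)$ is an $\R$-invariant $1$-manifold, hence a disjoint union of real lines. The zero set $\morsedbar_\tau^{-1}(0)$ also carries the free $\R$-action, so each of its orbits is a full real line sitting inside one of those components; therefore $\morsedbar_\tau^{-1}(0)=\morsedbar_\tau^{-1}(N)$ in a neighborhood of any of its points. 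Lemma~\ref{lemma:clean-intersection-criterion} then gives clean intersection directly, with no appeal to stable/unstable manifolds. Your bump-function section $\tilde f_1$ is not translation-invariant, so this argument is unavailable with your choice of $E$; switching to $N=\operatorname{span}(Y)$ is what makes the proof go through.
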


\begin{proof}
The regularity of the family $\{(f, g_\tau)\}_\tau$ means that for any $u \in \widetilde{\M}_\tau$, the operator $\mathbb D_u := \Pi_u \circ d \morsedbar|_{T_u \widetilde{\mathcal B}}: T_u \widetilde{\mathcal B} \to \mathcal E_u$ is surjective, where $\mathcal E_u = (\mathcal E_\tau)_u$ and $\Pi_u: T_{(u, 0)} \mathcal E \simeq  T_u \widetilde{\mathcal B} \oplus \mathcal E_u \to \mathcal E_u$ is the projection map. 
Note that $T_u \widetilde{\mathcal B} = T_u \widetilde{\mathcal B}_\tau \oplus \R\langle \frac{\partial}{\partial \tau} \rangle$, and $\mathbb D_u (\xi_u \oplus a \frac{\partial}{\partial \tau}) = (D_\tau)_u \xi_u + a Y_u$, where $D_\tau$ is the linearization of $\morsedbar_\tau$, $a \in \R$, and $Y_u = \frac{d}\nabla_\tau \grad{f}{g_\tau}(u)$.

Suppose at some $\tau = \tau_0$ and $u = u_0 \in \widetilde{\M}_\tau$, $(D_\tau)_u : T_u \widetilde{\mathcal B} \to \mathcal E_u$ is not surjective; otherwise, there is nothing to prove. (At the point $(u_0, \tau_0)$, the projection $\widetilde \M \to [0,1]$ is not a submersion.)

We first choose a finite-dimensional reduction. 
Let $\mathcal U \subset \widetilde{\mathcal B}_\tau$ be a neighborhood of $u$, and let $E \to \mathcal U$ be a finite-dimensional subbundle of $\mathcal E_\tau|_{\mathcal U} \to \mathcal U$ such that $\morsedbar_\tau$ is transverse to $E$. Then we restrict to $V := \morsedbar_\tau^{-1}(E) \subset \mathcal U$ implicitly. It is easy to see that restricting $\morsedbar_\tau: \widetilde{\mathcal B}_\tau \to \mathcal E_\tau$ to $(\morsedbar_\tau)|_V: V \to E|_V$ does not change $\dim \ker (D_\tau)_u$, $\dim \coker (D_\tau)_u$, and hence $\ind (D_\tau)_u$. We further assume $Y$ is a section of $E$.

It is clear that $Y_u \neq 0$, and $\dim \ker (D_\tau)_u = \dim \coker (D_\tau)_u = 1$.

Let $N$ be the subbundle of $E$ (more precisely, $E|_V$) spanned by $Y$.
Then $\morsedbar_\tau$ is transverse to $N$, and hence $\morsedbar_\tau^{-1}(N)$ is a 1-dimensional manifold, and hence diffeomorphic to a disjoint union of real lines because of the free $\R$-action on the moduli space.
Since $\morsedbar_\tau^{-1}(0) \subset \morsedbar_\tau^{-1}(N)$ also admits the free $\R$-action, $\morsedbar_\tau^{-1}(0)$ is also a disjoint union of real lines. 
Hence, in a neighborhood of $\morsedbar_\tau^{-1}(0)$, we have $\morsedbar_\tau^{-1}(0) = \morsedbar_\tau^{-1}(N)$, and by Lemma~\ref{lemma:clean-intersection-criterion}, $\morsedbar_\tau^{-1}(0)$ is cleanly cut out.
\end{proof}

\begin{example}
Suppose $X$ is a smooth manifold with boundary, and $f: X \to \R$ is a Morse function with some critical points on $\partial X$. The Morse homology of $X$ using $f$ is studied in \cite{kronheimer2007Monopoles}. In this case, there does not always exist a Riemannian metric $g$ on $X$ such that the pair $(f,g)$ is Morse-Smale. Instead, it is shown in \cite{bao2024involutive} that a generic metric makes all moduli spaces cleanly cut out. 
\end{example}

A generalization of this example is a manifold with a finite group action. 
\begin{question}
    For any fixed equivariant Morse function, does a generic equivariant metric make all the moduli spaces cleanly cut out?
\end{question}

\begin{example}
Let $(M,\omega)$ be a closed symplectic manifold of dimension $\geq 6$. Theorem A of \cite{wendl2023transversality} implies that for a generic compatible almost complex structure, pseudo-holomorphic curves of index $0$ are super-rigid. Given a super-rigid curve, its $d$-fold branched covers are cleanly cut out.
\end{example}

\begin{lemma}
    Suppose that $\parametrizedModF{p}{q}$ is cleanly cut out. Then $\parametrizedModF{p}{q}$ is a disjoint union of smooth manifolds. The dimension of each connected component is given by $\dim \ker D_u$ for any $u$ in that component. In particular, the dimension of $\ker D_u$ (and hence the dimension of $\coker D_u$) is locally constant over $\parametrizedModF{p}{q}$.
\end{lemma}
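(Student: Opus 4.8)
The plan is to work locally, using the finite-dimensional reduction built into the definition of ``cleanly cut out'', and then transport the conclusion through the definition of clean intersection. Fix $u\in\parametrizedModF{p}{q}$. By assumption there is a neighborhood $\mathcal U\subset\widetilde{\mathcal B}$ of $u$ and a finite-dimensional subbundle $E\to\mathcal U$ of $\mathcal E|_{\mathcal U}$ with $\morsedbar$ transverse to $E$, so that $V:=\morsedbar^{-1}(E)\subset\mathcal U$ is a smooth submanifold and $s:=\morsedbar|_V\colon V\to E|_V$ meets the zero section $0_E$ cleanly; note that $\parametrizedModF{p}{q}\cap\mathcal U=s^{-1}(0)$.

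First I would read off the local structure from clean intersection. Viewing the graph $\Gamma_s$ and the zero section $0_E$ as submanifolds of the total space of $E|_V$, their intersection is $s^{-1}(0)$, and cleanliness gives that $s^{-1}(0)$ is a submanifold of the total space — hence of $V$, hence of $\widetilde{\mathcal B}$ — with, for each $z\in s^{-1}(0)$,
\[
T_z s^{-1}(0)=T_z\Gamma_s\cap T_z 0_E .
\]
Since $z$ is a zero of $s$, the vertical derivative $D_z s\colon T_zV\to E_z$ is connection-independent, $T_z\Gamma_s=\{(\xi,D_z s(\xi)):\xi\in T_zV\}$ and $T_z 0_E=T_zV\oplus 0$, so the identity above reads $T_z s^{-1}(0)=\ker D_z s$.

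Next I would compare $\ker D_z s$ with $\ker D_z$, where $D_z$ is the linearization of $\morsedbar$ on $T_z\widetilde{\mathcal B}$. Transversality of $\morsedbar$ to $E$ gives $T_zV=D_z^{-1}(E_z)$, and under the fiberwise splitting the vertical derivative $D_z s$ is simply $D_z|_{T_zV}$; since $\ker D_z\subset D_z^{-1}(E_z)=T_zV$, we get $\ker D_z s=\ker D_z$, which is finite-dimensional because $D_z$ is Fredholm. Thus each point of $\parametrizedModF{p}{q}$ has a neighborhood that is a finite-dimensional smooth submanifold of $\widetilde{\mathcal B}$ of dimension $\dim\ker D_z$. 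These local submanifold structures are compatible, being charts of the ambient Banach manifold $\widetilde{\mathcal B}$, so $\parametrizedModF{p}{q}$ inherits a canonical smooth manifold structure, possibly with components of different dimensions — that is, a disjoint union of smooth manifolds. On a connected smooth manifold the dimension is constant, so $\dim\ker D_u$ is constant on each connected component and equals its dimension; and from the Fredholm index formula $\dim\ker D_u-\dim\coker D_u=\ind p-\ind q$, a fixed integer, it follows that $\dim\coker D_u$ is locally constant as well.

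I do not expect a serious obstacle: the argument is entirely about faithfully carrying the notion of clean intersection through the finite-dimensional reduction. The one place to be careful is the tangent-space identity $T_z s^{-1}(0)=\ker D_z s$, which uses both that $z$ lies in the zero locus (so the vertical derivative is intrinsic) and that restricting $\morsedbar$ to $V=\morsedbar^{-1}(E)$ does not change the kernel — and the latter is exactly where the transversality $\morsedbar\pitchfork E$ is used.
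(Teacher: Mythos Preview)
Your argument is correct and is precisely the unpacking of what the paper records in one line (``follows directly from Proposition~\ref{prop:local-coordinate}''): you work directly from the definition of clean intersection to identify $T_z s^{-1}(0)=\ker D_z s$, and then use the transversality $\morsedbar\pitchfork E$ to see $\ker D_z s=\ker D_z$. The paper's route via H\"ormander's local-coordinate characterization and yours via the tangent-space identity are the same content; your version has the virtue of making explicit why the finite-dimensional reduction does not change the kernel.
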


\begin{proof}
    This follows directly from Proposition~\ref{prop:local-coordinate}.
\end{proof}

Let $D_u^*$ be the adjoint operator of $D_u$ with respect to the $L^2$-inner product induced by the metric. We have $\coker D_u$ is isomorphic to $\ker D_u^*$. We define the obstruction bundle $\obstructionBundle(p,q) \to \parametrizedModF{p}{q}$ by $\obstructionBundle(p,q)_u = \ker D_u^*$. Then the section $\morsedbar: \widetilde{\mathcal B}(p,q) \to \mathcal E(p,q)$ is transverse to the sub-bundle $\mathcal O(p,q) \subset \mathcal E(p,q)$, so $\morsedbar^{-1}(\mathcal O(p,q)) \subset \widetilde{\mathcal B}(p,q)$ is a smooth submanifold of dimension $ \ind(p) - \ind(q) + \dim \obstructionBundle(p,q)_u = \dim \ker D_u$. Since $\parametrizedModF{p}{q} = \morsedbar^{-1}(0) \subset \morsedbar^{-1}(\obstructionBundle(p,q))$, we have $\parametrizedModF{p}{q} = \morsedbar^{-1}(\obstructionBundle(p,q))$.
The $\R$-action on $\parametrizedModF{p}{q}$ induces an identification of the fibers of $\obstructionBundle$, so we get a vector bundle over $\modF{p}{q}$, still denoted by $\obstructionBundle \to \modF{p}{q}$.

\section{Gluing} \label{section: gluing}
Let $p, q, r \in \crit(f)$, $\M_- = \modF{p}{q}/\R$, and $\M_+ = \modF{q}{r}/\R$. We glue the moduli spaces $\M_+$ with $\M_-$.

\begin{figure}
    \centering
    \begin{tikzpicture}[scale = 0.3]
        \draw[arrows = {-Latex[width=5pt, length=10pt]}](0,4) ..controls (1.5,2) .. (2,0);
        \draw[arrows = {-Latex[width=5pt, length=10pt]}] (4,8)..controls (1.5,6.5) ..  (0,4);
        \draw[->](4,4) -- (7,4);
        \draw[arrows = {-Latex[width=5pt, length=10pt]}](8,0) -- (8,8);
        \node[] at (9, 4){$\R$};
        \node[] at (5.5, 5){$f$};
        \node[anchor = east] at (1, 2){$u_+$};
        \node[anchor = east] at (2, 7){$u_-$};
        \filldraw[black] (2,0)  circle (1pt) node[anchor = east]{$r$};   
        \filldraw[black] (0,4)  circle (1pt) node[anchor = east]{$q$};
        \filldraw[black] (4,8)  circle (1pt) node[anchor = south]{$p$};
    \end{tikzpicture}
    \caption{A broken trajectory}
    \label{fig:enter-label}
\end{figure}
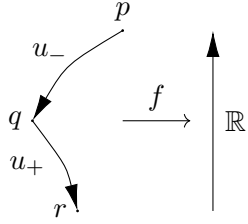

\subsubsection{Choice of representative} \label{subsubsection: choice of representative}
For a small constant $\varepsilon > 0$, we choose the unique representatives $u_\pm$ of $[u_\pm] \in \M_\pm$ as follows: there exist $l_+, l_- > 0$ such that 
\begin{enumerate}
    \item $u_+(-l_+) = f(q) - \varepsilon$ and $u_+(l_+) = f(r) + \varepsilon$
    \item $u_-(-l_-) = f(p) - \varepsilon$ and $u_-(l_-) = f(q) + \varepsilon$
\end{enumerate}
This is possible as long as $\varepsilon$ is smaller than the gaps between the values of $f$ at different critical points.

\subsection{Construction of obstruction bundle gluing} \label{section: pregluing}
In this section, we review the obstruction bundle gluing originally defined by Hutchings-Taubes.
Let $T$ be a large gluing parameter and $r$ be a fixed large constant. We define the pregluing of $[u_+] \in \M_+$ with $[u_-] \in \M_-$. We first choose representatives $u_\pm$ as explained in Section~\ref{subsubsection: choice of representative}.

% Fix large $h > 0$.  
Choose a cutoff function $\beta: \R \to [0,1]$ such that $\beta(s)=0$ for $s\leq -1$ and $\beta(s)=1$ for $s\geq 1$. 
Let $\bMinus{T}(s) = \beta(\frac{T-s}{r})$ and $\bPlus{T}(s) = \beta(\frac{T+s}{r})$.
See Figure~\ref{fig:branch jumps at node}.

\begin{figure} 
\centering
\def\h{0.5}
\def\hh{0.5}
\def\ss{0.3}
\def\hhh{0.6}

\begin{tikzpicture}[scale = 0.8, every node/.style={scale=0.6}]

\draw[->] (-7, 0) -- (7,0);
\filldraw[black] (-4,0)  circle (1pt) node[anchor = north]{$-T -h$};
\filldraw[black] (-3,0)  circle (1pt) node[anchor = north]{$-T + h$};
\filldraw[black] (0,0)  circle (1pt) node[anchor = north]{$0$};
\filldraw[black] (3,0)  circle (1pt) node[anchor = north]{$T-h$};
\filldraw[black] (4,0)  circle (1pt) node[anchor = north]{$T + h$};

% beta +
\draw[red] (-5,\h) -- (-4, \h) .. controls (-4 + \ss, \h) and (-3 - \ss, \h + \hh) .. (-3, \h + \hh) -- (5, \h + \hh);
% beta -
\draw[blue] (-5, \hhh + \hh ) -- (3, \hhh + \hh) .. controls (3 + \ss,  \hhh + \hh) and (4 - \ss, \hhh ).. (4, \hhh) -- (5, \hhh);

\node at (-5.2, \h){$\bPlus{T}$};
\node at (5.2, \hhh){$\bMinus{T}$};

\def \rr{0}
\def \hhhh{-0.5}
% u_+ 
\draw[brown] (-2 + \rr , 1+ \hhhh) .. controls (-1+ \rr, 1+ \hhhh) ..  (2+ \rr, 3+ \hhhh);
\node at (2+ \rr + 0.3, 3+ \hhhh){$u_+$};

\def \rr{4.5}
\def \hhhh{-0.5}
% u_{+,-2T}
\draw[brown] (-2 + \rr , 1+ \hhhh) .. controls (-1+ \rr, 1+ \hhhh) ..  (2+ \rr, 3+ \hhhh);
\node at (2+ \rr + 0.5, 3+ \hhhh){$\uPlus{-2T}$};

\def \rr{0}
\def \hhhh{-0.5}
% u_-
\draw[cyan] (2 - \rr , 1+ \hhhh) .. controls (1- \rr, 1+ \hhhh) ..  (-2- \rr, 3+ \hhhh);
\node at (-2- \rr - 0.3, 3+ \hhhh){$u_-$};

\def \rr{4.5}
\def \hhhh{-0.5}
% u_{-,2T}
\draw[cyan] (2 - \rr , 1+ \hhhh) .. controls (1- \rr, 1+ \hhhh) ..  (-2- \rr, 3+ \hhhh);
\node at (-2- \rr + 0.5, 3+ \hhhh){$\uMinus{2T}$};

\end{tikzpicture}
\caption[]{Cutoff functions and translated maps.}
\label{fig:branch jumps at node}
\end{figure}
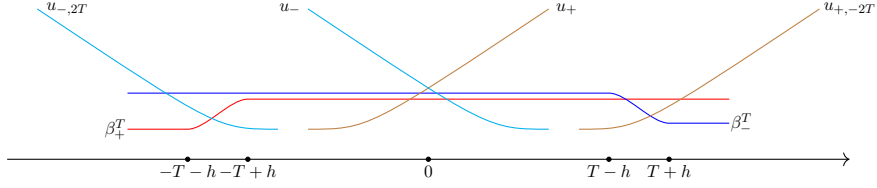

Let $\trans{T}: \R \to \R$ be translation by $T$, i.e., $\trans{T}(s)= s + T$,
and $\uPlus{T} = u_+ \circ \trans {T}$.
Note that $\trans{T}^* (u_+ ^* TM) = \uPlus{T}^* TM.$
Let $\Perturb_+ \in L_1^2(u_+^*TM)$,
and let $\pPlus{T} = \trans{T}^* \Perturb \in L_1^2(\uPlus{T}^*TM)$.
Similarly, we can define $\uMinus{2T}$ and $\pMinus{2T}$.

We identify a neighborhood of $q$ inside $M$ with a ball centered at the origin in $\R^n$.
We choose a metric $g_0$ on $M$ that agrees with $g$ outside the neighborhood of $q$ and equals the standard Euclidean metric inside a smaller neighborhood of $q$. We use this metric $g_0$ to define exponential maps: $u_\pm^* N_\pm \to B(u_\pm)$, where $N_\pm$ is the normal bundle to $u_\pm$ in $M$, and $B(u_\pm)$ is a tubular neighborhood of $u_\pm.$
With this choice, the exponential maps near $q$ are given by addition in $\R^n$.
Since all the interpolations for gluing happen near $q$, we do not distinguish exponential maps from addition in notation. Note that for the gradient vector field, we still use the original metric $g$.

We define the pregluing as follows:
\begin{equation}\label{eqn: pregluing}
u = \bPlus{T}(\uPlus{-2T} + \pPlus{-2T}) + \bMinus{-T}(\uMinus{2T} + \pMinus{2T}).
\end{equation}
Let $v := \bPlus{T}\uPlus{-2T} + \bMinus{-T}\uMinus{2T}$ and $\Phi: u^*TM \to v^*TM$ be the identification given by parallel transportation along the path $\{u_\tau\}_{0\leq \tau \leq 1}$, where $u_\tau$ is defined in the same way as $u$ but with $\psi_\pm$ replaced by $\tau \psi_\pm$, using the metric $g_0$.
Similarly, we identify $(\op{Exp}_{u_{-, 2T}} \psi_{-, 2T})^* TM$ with $u_{-, 2T}^* TM$, and  $(\op{Exp}_{u_{+, -2T}} \psi_{+, -2T})^* TM$ with $u_{+, -2T}^* TM$.
Then the equation $\morsedbar u = 0$ or more precisely $\Phi \morsedbar u = 0$ can be expressed as:
\begin{align}\label{eqn: Lu}
\Phi\morsedbar u &= \bPlus{T} \left(D_{\uPlus{-2T}} \pPlus{-2T} + \frac{d}{ds}(\bMinus{-T})(\uMinus{2T} + \pMinus{2T})\right) \nonumber \\
&\quad+ \bMinus{-T} \left(D_{\uMinus{2T}}\pMinus{2T} + \frac{d}{ds}(\bPlus{T})(\uPlus{-2T} + \pPlus{-2T})\right) = 0. 
\end{align} 
It suffices for the following equations to be satisfied:
\begin{equation} \label{eqn: Theta plus 2T} 
\Theta_+^{-2T}: = D_{\uPlus{-2T}} \pPlus{-2T} + \frac{d}{ds}(\bMinus{-T})(\uMinus{2T} + \pMinus{2T}) = 0,
\end{equation}
\begin{equation}\label{eqn: Theta minus 2T}
\Theta_-^{2T}: = D_{\uMinus{2T}}\pMinus{2T} + \frac{d}{ds}(\bPlus{T})(\uPlus{-2T} + \pPlus{-2T}) = 0.
\end{equation} 

These equations become the following equations after applying $\trans{\pm 2T}^*$:
\begin{equation} \label{eqn: Theta plus} 
\Theta_+ := D_{u_+} \psi_+ + \frac{d}{ds}(\bMinus{T})(\uMinus{4T} + \pMinus{4T}) = 0
\end{equation}
\begin{equation} \label{eqn: Theta minus}
\Theta_- := D_{u_-} \psi_- + \frac{d}{ds}(\bPlus{-T})(\uPlus{-4T} + \pPlus{-4T}) = 0. 
\end{equation}

For a fixed $(u_+, u_-)$, the system of equations $\Theta_+(\psi_+, \psi_-) = 0$ and $\Theta_-(\psi_+, \psi_-) = 0$ does not always have solutions. We consider the following two equations:
\begin{equation}\label{eqn: Theta hat plus}
\Theta_+^\perp = D_{u_+} \psi_+ + (1 - \Pi_+) \left(\frac{d}{ds}(\bMinus{T})(\uMinus{4T} + \pMinus{4T})\right) = 0
\end{equation}
\begin{equation}\label{eqn: Theta hat minus}
\Theta_-^\perp = D_{u_-} \psi_- + (1 - \Pi_-) \left(\frac{d}{ds}(\bPlus{-T})(\uMinus{4T} + \pMinus{4T})\right) = 0,
\end{equation}
where $\Pi_\pm$ is the orthogonal projection onto $\ker D^*_{u_\pm}$.

\begin{proposition}\label{prop: contraction mapping}
For any $(u_+, u_-)$, there exists $R \in \R$ such that for any $T \geq R$, the equations $\Theta_+^\perp (\psi_+, \psi_-) = 0$ and $\Theta_-^\perp (\psi_+, \psi_-) = 0$ have a unique solution.
\end{proposition}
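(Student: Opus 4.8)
The plan is to solve the coupled system \eqref{eqn: Theta hat plus}--\eqref{eqn: Theta hat minus} by a Newton--Picard fixed point argument. Since $(u_+,u_-)$ is fixed, $D_{u_\pm}\colon L_1^2(u_\pm^*TM)\to L^2(u_\pm^*TM)$ are fixed Fredholm operators; as $u_\pm$ converges exponentially to nondegenerate critical points, $D_{u_\pm}$ has closed image $\op{im}D_{u_\pm}=(\ker D_{u_\pm}^*)^{\perp}$, and its restriction to $(\ker D_{u_\pm})^{\perp}\cap L_1^2$ is a Banach space isomorphism onto $\op{im}D_{u_\pm}$; let $Q_\pm$ be the bounded inverse. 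Throughout we seek $(\psi_+,\psi_-)$ with $\psi_\pm$ $L^2$-orthogonal to $\ker D_{u_\pm}$, i.e.\ in $X:=\bigl((\ker D_{u_+})^{\perp}\cap L_1^2\bigr)\oplus\bigl((\ker D_{u_-})^{\perp}\cap L_1^2\bigr)$ --- the kernel directions are the moduli directions of $\M_\pm$ and are held fixed here. Because $1-\Pi_\pm$ is the $L^2$-projection onto $\op{im}D_{u_\pm}$, the system \eqref{eqn: Theta hat plus}--\eqref{eqn: Theta hat minus}, restricted to $X$, is equivalent to the fixed point equation $(\psi_+,\psi_-)=F_T(\psi_+,\psi_-)$ with
\begin{equation*}
F_T(\psi_+,\psi_-):=\bigl(-Q_+(1-\Pi_+)e_+(\psi_-),\ -Q_-(1-\Pi_-)e_-(\psi_+)\bigr),
\end{equation*}
where $e_+(\psi_-)$ and $e_-(\psi_+)$ denote the cutoff error terms in \eqref{eqn: Theta hat plus} and \eqref{eqn: Theta hat minus} respectively, each an affine function of the displayed perturbation. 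Note $F_T$ is affine, off-diagonal, and maps $X$ into $X$.

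Next I would prove the two estimates that drive the argument. The cutoff derivatives $\frac{d}{ds}\bMinus{T}$, $\frac{d}{ds}\bPlus{-T}$ are supported in an interval of length $O(r)$ inside the neck region, where the translates of $u_\pm$ appearing in \eqref{eqn: Theta hat plus}--\eqref{eqn: Theta hat minus} lie within distance $O(e^{-\delta T})$ of $q$ in the flat coordinates near $q$ fixed in Section~\ref{section: pregluing}; here $\delta>0$ is the exponential convergence rate, governed by the smallest modulus of an eigenvalue of $\hess{f}$ at $q$. Hence the ``constant part'' $e_\pm(0)$ of $e_\pm$ has $L^2$-norm $O(e^{-\delta T})$, and since $\|Q_\pm\|$ is a fixed constant, $\|F_T(0)\|_X\le C\,e^{-\delta T}$. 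On the other hand, since $\bigl\|\frac{d}{ds}\bMinus{T}\bigr\|_{L^\infty},\bigl\|\frac{d}{ds}\bPlus{-T}\bigr\|_{L^\infty}=O(1/r)$ and $\|\xi\|_{L^2}\le\|\xi\|_{L_1^2}$,
\begin{equation*}
\|e_+(\psi_-)-e_+(\psi_-')\|_{L^2}\le \frac{C_1}{r}\,\|\psi_--\psi_-'\|_{L_1^2},
\end{equation*}
and likewise for $e_-$; thus $F_T$ is globally Lipschitz on $X$ with constant $\le C_0C_1/r$, where $C_0=\max(\|Q_+\|,\|Q_-\|)$. Choosing the fixed constant $r$ large enough (relative to $C_0,C_1$ for this $(u_+,u_-)$) makes this constant $\le\tfrac12$.

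With $r$ so fixed, $F_T\colon X\to X$ is a global $\tfrac12$-contraction, so by the Banach fixed point theorem it has a unique fixed point $(\psi_+,\psi_-)$, which is then the unique solution in $X$ of \eqref{eqn: Theta hat plus}--\eqref{eqn: Theta hat minus}; moreover $\|(\psi_+,\psi_-)\|_X\le 2\|F_T(0)\|_X\le 2C\,e^{-\delta T}$, so taking $R$ so that this bound is as small as one wishes gives the assertion for all $T\ge R$. I do not expect a conceptual obstacle --- this is the routine ``existence'' half of obstruction bundle gluing. The points requiring care are: (i) making the identifications near $q$ precise, so that $e_\pm(\psi_\mp)$ is honestly a section of $u_\pm^*TM$ and can be added to $D_{u_\pm}\psi_\pm$ --- this is exactly what the flat metric $g_0$ and the ``exponential map $=$ addition'' conventions of Section~\ref{section: pregluing} arrange; and (ii) the bookkeeping of translations and supports underlying the $O(e^{-\delta T})$ and $O(1/r)$ bounds. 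The single genuinely new choice is pinning down the fixed constant $r$ large; the harder problem of making $\delta, C, C_0, C_1$ uniform as $(u_+,u_-)$ varies --- needed when this estimate is later upgraded --- does not arise for a fixed pair.
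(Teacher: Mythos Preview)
Your proposal is correct and follows essentially the same approach as the paper: set up the fixed point map $\Gamma=F_T$ on $\mathcal H^+\times\mathcal H^-=(\ker D_{u_+})^\perp\times(\ker D_{u_-})^\perp$ using the right inverses $Q_\pm=D_\pm^{-1}$, and apply the contraction mapping theorem. Your write-up is in fact more detailed than the paper's sketch, which simply cites the standard estimates from Hutchings--Taubes; one pleasant sharpening you note is that in this Morse setting the error terms are genuinely affine in $(\psi_+,\psi_-)$, so $F_T$ is a global contraction on $X$ rather than merely on a small ball as the paper (inheriting phrasing from the pseudo-holomorphic curve case) states.
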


\begin{proof}[Sketch of proof]
This essentially follows from \cite{hutchings2009gluing}. Let $\mathcal H^\pm = (\ker D_{u_\pm})^\perp \subset L_1^2(u_\pm^*TM)$. Let $D_\pm = D_{u_\pm}|_{\mathcal H^\pm}: \mathcal H^\pm \to \op{im}D_{u_\pm} \subset L^2(u_\pm^*TM)$. Then $D_\pm$ is an isomorphism. We consider the map $\Gamma: \mathcal H^+ \times \mathcal H^- \to \mathcal H^+ \times \mathcal H^-$ defined by
\begin{align*}
(\psi_+, \psi_-) &\mapsto \left(-D_+^{-1}(1 - \Pi_+) \left(\frac{d}{ds}(\bMinus{T})(\uMinus{4T} + \pMinus{4T})\right), \right. \\
&\quad \left.-D_-^{-1}(1 - \Pi_-) \left(\frac{d}{ds}(\bPlus{-T})(\uPlus{-4T} + \pPlus{-4T})\right) \right).
\end{align*}
A standard estimate, as in Proposition 4.6 of \cite{hutchings2009gluing} or Lemma 8.4.4 of \cite{bao2018definition}, shows that $\Gamma$ is a contraction map when restricted to a small ball in $\mathcal H^+ \times \mathcal H^-$. Hence, by the contraction mapping theorem, we have a unique solution.
\end{proof}

\subsection{Obstruction Section}
We define the obstruction bundle $\obstructionBundlePM \to \M_+ \times \M_- \times [R,\infty)$ to be $\op{pr}_+^*\obstructionBundle_+ \oplus \op{pr}_-^*\obstructionBundle_-$, or simply, $\obstructionBundle_+ \oplus \obstructionBundle_-$, where $\op{pr}_\pm: \M_+ \times \M_- \times [R,\infty) \to \M_\pm$ is the projection map, and $\obstructionBundle_\pm \to \M_\pm$ is the obstruction bundle defined by $\obstructionBundle_\pm |_{[u_\pm]} = \ker D_{u_\pm}^*$.

Let $\compactSubset_\pm \subset \M_\pm$ be any two compact subsets, and $\openSubset_\pm$ be open subsets such that $\compactSubset_\pm \subset \openSubset_\pm \subset \M_\pm$. We define the \emph{obstruction section} $\obstructionSectionS_\pm: \openSubset_+ \times \openSubset_- \times [R,\infty) \to \obstructionBundle_\pm$ by
\begin{equation}
\obstructionSectionS_\pm([u_+],[u_-], T) = \Pi_\pm \left((\frac{d}{ds}\bump{\mp}{\pm T})(\umap{\mp}{\pm 4T} + \perturb{\mp}{\pm 4T}) \right),
\end{equation} 
where $\Pi_\pm$ is the orthogonal projection onto $\ker D^*_{u_\pm}$, and $(\psi_+,\psi_-)$ is the unique solution to Equation~\eqref{eqn: Theta hat plus} and \eqref{eqn: Theta hat minus}. 
Then $\obstructionSectionS_\pm = 0$, together with Equations~\eqref{eqn: Theta hat plus} and \eqref{eqn: Theta hat minus}, imply that Equations~\eqref{eqn: Theta plus} and \eqref{eqn: Theta minus}, and hence $\morsedbar u = 0$.

Let $\obstructionSectionS = (\obstructionSectionS_+, \obstructionSectionS_-): \openSubset_+ \times \openSubset_- \times [R,\infty) \to \obstructionBundlePM$, and $\obstructionSectionS^{-1}(0)$ is the space of broken curves and gluing parameters that can be glued.
The dependence of the obstruction section on $([u_+], [u_-], T)$ is implicit through $\psi_{\pm}$. Its main part, \emph{the linearized obstruction section}, defined by $\obstructionSectionS_0 = (\obstructionSectionS_{0,+},\obstructionSectionS_{0,-})$, where
\begin{equation}
\obstructionSectionS_{0,\pm} = \Pi_\pm \left((\frac{d}{ds} \bump{\mp}{\pm T})\umap{\mp}{\pm 4T}\right)
\end{equation}
is computed explicitly later in Equations~\eqref{eqn: s0+} and \eqref{eqn: s0-}.

\subsection{Evaluation maps and linearized sections} \label{section: evaluation maps}
In this section, we define evaluation maps and provide an explicit expression for the linearized section.
In this section make the following additional assumption about the metric $g$ near the critical points, which is {\em not} required in the rest of the paper:
\begin{assumption}
    We assume $(f,g)$ is normal around all the critical points of $f$.
\end{assumption}
\begin{definition}[Normal]
    Let $p$ be a critical point of $f$.
    We say the pair $(f,g)$ is {\em normal around $p$}, if there exists a coordinate chart $x = (x_i)_{i\in \mathcal I}$ of $M$ around $p$ with the index set $\mathcal I = \mathcal I_- \sqcup \mathcal I_+$, where  
    \[
    \mathcal I_-= \{-\ind p, \dots, {-1}\}\hspace{0.5cm} \text{ and }\hspace{0.5cm} \mathcal I_+ = \{ 1, \dots, {n-\ind p}\},
    \]
    such that $$f(x) = f(p)+ \frac{1}{2}\sum_{i \in \mathcal I} \lambda_i x_i^2$$ for some constants $\lambda_i$, satisfying 
    \begin{equation} \label{eqn: lambdai}
        \lambda_{-\ind p} \leq \dots \leq \lambda_{-1} < 0 < \lambda_1 \leq \dots \leq \lambda_{n-\ind p},
    \end{equation}
    and $$g = \sum_{i\in \mathcal I} dx_i^2.$$
\end{definition}
If $(f,g)$ is normal around a critical point $p$, then with respect to the coordinate $x$, the $\grad{f}{g}(x) = \sum_{i\in \mathcal I} \lambda_i x_i \frac{\partial}{\partial x_i}$ is linear in $x$.

Let $\tildeM = \parametrizedModF{p}{q}$ and we define evaluation maps $\widetilde{\op{ev}}_-: \tildeM \to \R^{\ind p}$ by 
$$u \mapsto (c_{-\ind p}, \dots, c_{-1}),$$
where
\begin{itemize}
    \item near $s = - \infty$, 
    \begin{equation} \label{eqn: fourier expansion for solution}
        u(s) = \sum_{i \in \mathcal I_-} c_i e^{-\lambda_i s} v_i
    \end{equation}
    \item $v_i$ and $\lambda_i$ are eigenvectors and eigenvalues of $\hess{f} (p)$, i.e., $$\hess{f}(p) v_i = \lambda_i v_i,$$ in particular, $\lambda_i$ satisfies Formula~\ref{eqn: lambdai} and $v_i = \frac{\partial}{\partial x_i}$ using the coordinate $x$.
\end{itemize}
Similarly, we can define $\widetilde{\op{ev}}_+: \tildeM \to \R^{n-\ind q}$.

In the general case, without assuming $(f,g)$ is normal, the above expression needs to be replaced by the first term plus implicit higher-order terms.  This implies that the evaluation map is defined as $\widetilde{\op{ev}}_{-,1}: \tildeM \to \R$ by $u \mapsto c_{-1},$
which is sufficient for our example calculation.

For elements in the cokernel, we also have an expansion similar to Formula~\ref{eqn: fourier expansion for solution}. More precisely, for any $u \in \tildeM$, let $D_u$ be the linearized operator $\frac{d}{ds} + \hess{f}$ of $\morsedbar$ along $u$. Let $D_{u}^* = - \nabla_s + \hess{f}: L_1^2(u^* TM) \to L^2(u^* TM)$ be the adjoint operator of $D_u$ with respect to the $L^2$-norm. For any $\eta \in \ker D_{u}^*$, near $s = -\infty$, we have
\begin{equation}\label{eqn: cokernel explicit fourier expansion}
    \eta(s) = \sum_{i \in \mathcal I_+} d_i e^{\lambda_i s} v_i.
\end{equation}
We have a similar expression for $\eta$ near $s = +\infty$.

Now we are in a position to calculate the linearized section $\obstructionSectionS_0$ explicitly. The function $\frac{d}{ds}\bMinus{T}(s)$ is supported in $[-T-1, -T+1]$, and $\uMinus{4T}(s) = u(s+4T)$ lies in a small neighborhood of $q$ for $s \in [-T-1, -T+1]$ and large $T \gg 0$. By Equation~\eqref{eqn: fourier expansion for solution}, we have, for $s \in [-T-1, -T+1]$,
$$\uMinus{4T} = \sum_{i \in \mathcal I_+} c^-_i e^{-\lambda^i(s + 4T)}v^i.$$
Moreover, for any $\eta_+ \in \ker D^*_{u_+}$, and $s \in [-T-1, -T+1]$, we have
$$\eta_+(s) = \sum_{i \in \mathcal I_+} d^+_i e^{\lambda^i s} v^i.$$
This implies
\begin{equation}\label{eqn: s0+}
\langle \mathfrak{s}_{0,+}, \eta_+ \rangle = \sum_{i \in \mathcal I_+} c^-_i d^+_i e^{-4\lambda_i T}.
\end{equation}
Similarly, for $s \in [T-1, T+1]$, we can write
$$\uPlus{-4T} = \sum_{i \in \mathcal I_-} c^+_i e^{-\lambda^i(s - 4T)}v^i.$$
For any $\eta_- \in \ker D^*_{u_-}$, and $s \in [T-1, T+1]$, we have
$$\eta_-(s) = \sum_{i \in \mathcal I_-} d^-_i e^{\lambda^i s} v^i.$$
Then we have
\begin{equation}\label{eqn: s0-}
\langle \mathfrak{s}_{0,-}, \eta_- \rangle = -\sum_{i \in \mathcal I_-} c^+_i d^-_i e^{4\lambda_i T}.
\end{equation}
The gluing of broken trajectories that consist of more than two pieces, and the correspoinding obstruction section is explained in Theorem~\ref{thm: simultaneous gluing}. The corresponding linearized section has a similar expression.

\subsection{Glue two level trajectories}
Let $\delta > 0$ be a small constant. Let $\compactSubset_\pm \subset \M_\pm$ be compact subsets. We define the notion of being \emph{close to breaking} in the spirit of Definition 7.1 in \cite{hutchings2009gluing} as follows:

\begin{definition}[Close to breaking]\label{defn:close_to_breaking}
An element $[u] \in \M$ is \emph{$\delta$-close to breaking into the broken trajectory} $([u_+],[u_-]) \in \compactSubset_+ \times \compactSubset_-$ if there exist representatives $u$, $u_+$, and $u_-$ of $[u]$, $[u_+]$, and $[u_-]$ respectively, and $\sigma > 0$, satisfying the following conditions:
\begin{enumerate}[label=(\alph*)]
    \item $u_+|_{(-\infty, \sigma]}$, $u_-|_{[-\sigma, \infty)}$, and $u|_{[-\sigma, \sigma]}$ are $\delta$-close in the $C^1$-norm to constant maps to $q.$
    \item $u|_{[\sigma, \infty)}$ is $\delta$-close in the $C^1$-norm to $u_+|_{[\sigma, \infty)}$.
    \item $u|_{(-\infty, -\sigma]}$ is $\delta$-close in the $C^1$-norm to $u_-|_{(-\infty, -\sigma]}$.
\end{enumerate}
We denote the space of such $\delta$-close to breaking trajectories by $\closeToBreaking{\delta}{\compactSubset_+}{\compactSubset_-}$.
\end{definition}

% This implies $u$ is $2\delta$-close in the $C^1$-norm to $\bPlus{\sigma/2} u_+ + \bMinus{-\sigma/2} u_-.$

Let $\M = \modF{p}{r}$, and let $\obstructionBundle \to \M$ be the obstruction bundle.

\begin{theorem}[Gluing]\label{thm: gluing} 
Suppose that $\M_+$, $\M_-$, and $\M$ are cleanly cut out. For any compact subsets $\compactSubset_\pm \subset \M_\pm$, there exist open neighborhoods $\compactSubset_\pm \subset \openSubset_\pm \subset \M_\pm$, a gluing parameter bound $R > 0$, a close to breaking parameter bound $\delta >0$, and a section $\obstructionSectionS$ also known as the obstruction section, of the bundle $\obstructionBundlePM \to \openSubset_+ \times \openSubset_- \times [R, \infty)$ such that the following holds:
\begin{enumerate}[label=(G\arabic*)]
    \item \label{G1} The obstruction section $\obstructionSectionS$ is a $C^1$-map. It intersects the zero section cleanly, 
    and $\obstructionSectionS \to 0$ in $C^1$-norm as $T\to \infty$.
   We denote $\gluableSet = \obstructionSectionS^{-1}(0)$, and let $i: \gluableSet \to \openSubset_+ \times \openSubset_- \times [R, \infty)$ be the inclusion map.

    \item There exists a $C^1$-gluing map $G: \gluableSet \to \M$ such that:
    \begin{enumerate}[label=(\roman*)]
        \item The map $G$ is a diffeomorphism onto its image.
        \item The image of $G$ is essentially the same as $\mathfrak{s}^{-1}(0)$:
        \be 
            \item[-] For any $R' \geq R$, there exists $\delta' > 0$ such that $\closeToBreaking{\delta'}{\compactSubset_+}{\compactSubset_-} \subset G \circ i^{-1}(\openSubset_+ \times \openSubset_- \times [R', \infty))$.
            \item[-] For any $0 < \delta' < \delta$, there exists $R' \geq R$ such that $G \circ i^{-1}(\compactSubset_+ \times \compactSubset_- \times [R', \infty)) \subset \closeToBreaking{\delta'}{\compactSubset_+}{\compactSubset_-}$.
        \ee 
        % \item Let $\{([u_+^n],[u_-^n], T^n)\}_n \subset \gluableSet$ be a sequence. Suppose that $[u_\pm^n] \to [u_\pm]$ and $T^n \to \infty$ as $n\to \infty$. Then $G([u_+^n],[u_-^n],T^n)$ Gromov converges to the broken curve $([u_+], [u_-]).$
    \end{enumerate}
    \item There exists a bundle map $j^\sharp: G^* \obstructionBundle \to i^* \obstructionBundlePM$ such that:
    \begin{enumerate}[label=(\roman*)]
        \item $j^\sharp$ is injective.
        \item $i^\sharp \circ j^\sharp$ is transverse to $\obstructionSectionS$ in $\obstructionBundlePM$, where $i^\sharp$ is the induced map from $i$. See the following diagram:
    \end{enumerate}
    \begin{equation}\label{fig: original_gluing_diagram}
    \begin{tikzcd}
    \obstructionBundlePM \arrow[d] & i^* \obstructionBundlePM \arrow{dr} \arrow[l, "i^\sharp"] & & G^* \obstructionBundle \arrow[ll, "j^\sharp"] \arrow{dl} \arrow[r, "G^\sharp"] & \obstructionBundle \arrow[d] \\
    \openSubset_+ \times \openSubset_- \times [R,\infty) \arrow[u, bend left, "\obstructionSectionS"] & & \gluableSet \arrow[ll, "i"] \arrow[rr, "G"] & & \M \\
    \end{tikzcd}.
    \end{equation}
\end{enumerate}
\end{theorem}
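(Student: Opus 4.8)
The plan is to follow the obstruction bundle gluing of Hutchings--Taubes \cite{hutchings2009gluing}, built on the pregluing and the contraction argument already set up above, and then to read off the bundle data and the clean-intersection statement at the end. Fix $\compactSubset_\pm$; for $T$ larger than a constant $R$, let $(\psi_+,\psi_-)=(\psi_+,\psi_-)([u_+],[u_-],T)$ be the unique small solution of \eqref{eqn: Theta hat plus}--\eqref{eqn: Theta hat minus} supplied by Proposition~\ref{prop: contraction mapping}, defined for $([u_+],[u_-])$ in open neighborhoods $\openSubset_\pm\supset\compactSubset_\pm$ chosen together with $R$. Substituting $(\psi_+,\psi_-)$ into the formulas for $\obstructionSectionS$ above defines $\obstructionSectionS$ on $\openSubset_+\times\openSubset_-\times[R,\infty)$ and $\gluableSet:=\obstructionSectionS^{-1}(0)$; since $\obstructionSectionS=0$ forces \eqref{eqn: Theta plus}--\eqref{eqn: Theta minus}, the preglued map $u$ of \eqref{eqn: pregluing} with the solved $\psi_\pm$ then satisfies $\morsedbar u=0$, and we set $G([u_+],[u_-],T):=[u]\in\M$. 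The maps $i^\sharp$ and $G^\sharp$ in the diagram \eqref{fig: original_gluing_diagram} are the tautological identifications of the pullback bundles $i^*\obstructionBundlePM$ and $G^*\obstructionBundle$. I would establish (G2) first, then (G3), deduce the clean intersection in (G1) from these, and lastly the $C^1$-regularity and the decay.

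For (G2): the gluing map $G$ has a $C^1$ inverse, built by reading off from $[u]$ the length of the long neck (which recovers $T$) and the two asymptotic ends (which recover $[u_\pm]$), together with the standard quantitative convergence estimates and elliptic bootstrapping; hence $G$ is a diffeomorphism onto its image. The comparison of this image with the sets $\closeToBreaking{\delta'}{\compactSubset_+}{\compactSubset_-}$ is the usual dichotomy: a trajectory that is $\delta'$-close to breaking, for $\delta'$ small depending on the prescribed $R'$, is $C^1$-close to a pregluing and hence, by the uniqueness in Proposition~\ref{prop: contraction mapping}, lies in $G\circ i^{-1}$ of the corresponding parameter region; conversely, glued trajectories with $T$ large are arbitrarily close to breaking. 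This gives the two bullet points, as in \cite{hutchings2009gluing}.

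For (G3) and the clean intersection: given $x=([u_+],[u_-],T)\in\gluableSet$ with $G(x)=[u]$, define $j^\sharp\colon\ker D_u^*\to\ker D_{u_+}^*\oplus\ker D_{u_-}^*$ by restricting $\eta\in\ker D_u^*$ to the two ends, multiplying by the cutoffs near $q$, and projecting orthogonally onto $\ker D_{u_\pm}^*$ --- the mirror of the pregluing of cokernel vectors. The linearized gluing analysis of \cite{hutchings2009gluing, bao2018definition} (all estimates uniform for $T\geq R$, after enlarging $R$) shows that $j^\sharp$ is injective and that $\op{im}(i^\sharp\circ j^\sharp)_x$ is transverse to $\op{im}\nabla\obstructionSectionS_x$ inside $(\obstructionBundlePM)_{i(x)}$, where $\nabla\obstructionSectionS_x$ denotes the intrinsic derivative (well defined since $\obstructionSectionS(x)=0$); this is exactly the transversality required in (G3)(ii). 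To deduce cleanness, observe that by (G2) the map $G$ identifies a neighborhood of $x$ in $\gluableSet$ with a neighborhood of $[u]$ in the smooth manifold $\M$, so by the Fredholm index formula $\gluableSet$ has codimension $\dim\ker D_{u_+}^*+\dim\ker D_{u_-}^*-\dim\ker D_u^*=\op{rank}\obstructionBundlePM-\op{rank}\obstructionBundle$ in $\openSubset_+\times\openSubset_-\times[R,\infty)$ near $x$. Since $T_x\gluableSet\subseteq\ker\nabla\obstructionSectionS_x$ always, this codimension count forces $\op{rank}\nabla\obstructionSectionS_x\leq\op{rank}\obstructionBundlePM-\op{rank}\obstructionBundle$, while the transversality of $i^\sharp\circ j^\sharp$ (with $j^\sharp$ injective) forces the reverse inequality; hence $T_x\gluableSet=\ker\nabla\obstructionSectionS_x$ --- so $\obstructionSectionS$ meets the zero section cleanly --- and $\dim\coker\nabla\obstructionSectionS_x=\op{rank}\obstructionBundle$, which is locally constant because $\M$ is cleanly cut out. (Alternatively one can obtain cleanness from Lemma~\ref{lemma:clean-intersection-criterion} applied inside $\obstructionBundlePM$, taking for the auxiliary submanifold $N$ a sub-bundle thickening of the graph of $\obstructionSectionS$ along a local extension of $\op{im}(i^\sharp\circ j^\sharp)$.)

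It remains to note that $\obstructionSectionS$ is $C^1$: one differentiates the fixed-point equation of Proposition~\ref{prop: contraction mapping} in the parameters $([u_+],[u_-],T)$ and solves the resulting linear equation with the bounded inverses $D_\pm^{-1}$, the estimates again uniform in $T$; and $\obstructionSectionS\to0$ in $C^1$ as $T\to\infty$ because $\obstructionSectionS=\obstructionSectionS_0+(\text{higher-order neck terms})$ while each component of $\obstructionSectionS_0$ is the exponentially small expression \eqref{eqn: s0+}--\eqref{eqn: s0-}. I expect the main obstacle to be the transversality/complementarity claim underpinning (G3): proving that the cut-off--restrict--project map on cokernels is injective and lands transversally to $\op{im}\nabla\obstructionSectionS$, uniformly in $T$. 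This is the linearized form of ``$\obstructionSectionS^{-1}(0)$ is the gluable locus'', and it is precisely what transports the cleanness of $\M$ and $\M_\pm$ to the obstruction section; it requires the full comparison of $D_u$ with $D_{u_\pm}$ across the long neck. Everything else --- injectivity of $G$, the close-to-breaking dichotomy, and the dimension bookkeeping --- is routine once the uniform-in-$T$ estimates are in hand.
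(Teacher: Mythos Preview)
Your proposal is correct and follows the direct Hutchings--Taubes obstruction bundle route, which the paper explicitly acknowledges as valid in the opening paragraph of its proof but then deliberately sets aside in favor of an alternative argument via a Kuranishi chart.

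The paper's proof instead builds an ambient finite-rank sub-bundle $\widetilde{\mathcal O}\subset\mathcal E$ over a neighborhood of the close-to-breaking locus (by pregluing cokernel elements and extending by parallel transport), defines a thickened moduli space $\mathbb V=\morsedbar^{-1}(\widetilde{\mathcal O})$, and proves a \emph{Kuranishi gluing} claim giving a bundle isomorphism $(G^\sharp,G)\colon\obstructionBundlePM\to\widetilde{\mathcal O}$ over $\openSubset_+\times\openSubset_-\times[R,\infty)\to\mathbb V$. The obstruction section is then \emph{defined} abstractly as $\obstructionSectionS=(G^\sharp)^{-1}\circ\morsedbar\circ G$, and $j^\sharp=(G^\sharp)^{-1}\circ\op{pr}$ with $\op{pr}\colon\obstructionBundle\to\widetilde{\mathcal O}$ the orthogonal projection; injectivity of $j^\sharp$ and the transversality (G3)(ii) are read off from the linear gluing lemma and its Remark (Claims~\ref{claim: projection surjective}--\ref{claim: lambda surjective}). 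The clean intersection in (G1) is not obtained by your dimension count but rather by observing that, under $G$, it is equivalent to $\morsedbar^{-1}(0)=\morsedbar^{-1}(\obstructionBundle)$ inside $\mathbb V$, which holds by the clean-intersection hypothesis on $\M$ via Lemma~\ref{lemma:clean-intersection-criterion}.

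Your approach is more elementary and stays closer to \cite{hutchings2009gluing}; your $j^\sharp$ is exactly the dual map $h^*$ described in the paper's Remark~\ref{remark: linear gluing}, and your dimension-counting deduction of cleanness (pairing the Fredholm-index codimension of $\gluableSet$ against the rank bound forced by (G3)(ii)) is a nice alternative to the paper's Lemma~\ref{lemma:clean-intersection-criterion} route. The paper's Kuranishi reformulation buys something you do not need here but the authors need later: because $\obstructionSectionS$ is literally the pullback of $\morsedbar$ through a gluing diffeomorphism of thickened spaces, compatibility of obstruction sections under iterated versus simultaneous gluing reduces to compatibility of the gluing maps themselves, which is the mechanism used in Section~\ref{section: construction of semi-global}.
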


\begin{remark}
Let $B = i \circ G^{-1}: \closeToBreaking{\delta}{\compactSubset_+}{\compactSubset_-} \to \openSubset_+ \times \openSubset_- \times [R, \infty)$, and let $B^\sharp = i^\sharp \circ j^\sharp \circ (G^\sharp)^{-1}: \obstructionBundle|_{\closeToBreaking{\delta}{\compactSubset_+}{\compactSubset_-}} \to \obstructionBundlePM$. We refer to $B$ as the \emph{breaking map} in contrast to the gluing map. We can simplify Diagram~\eqref{fig: original_gluing_diagram} as follows:
\begin{equation}\label{fig: simplified gluing diagram}
\begin{tikzcd}
\obstructionBundlePM \arrow[d] & \obstructionBundle \arrow[d] \arrow[l, "B^\sharp"] \\
\openSubset_+ \times \openSubset_- \times [R,\infty) \arrow[u, bend left, "\obstructionSectionS"] & \closeToBreaking{\delta}{\compactSubset_+}{\compactSubset_-} \arrow[l, "B"] \\
\end{tikzcd}
\end{equation}
\end{remark}

% \begin{remark}
% It is possible to upgrade the theorem by replacing $\compactSubset_\pm$ with $\M_\pm$, but we do not need such a statement for this paper.
% \end{remark}

Before going into the proof of the Gluing Theorem~\ref{thm: gluing}, we look at its application on perturbing the moduli spaces:

\subsection{A Perturbation Section}\label{section: perturbation section}
In this section, we focus on the easy case when $\M_\pm$ are compact, so $\openSubset_\pm = \M_\pm$. The general case is explained in Section~\ref{section: construction of semi-global}.

Let $\sigma_\pm$ be a section of $\obstructionBundle_\pm \to \M_\pm$, and $T_0 \in [R, \infty)$.
Denote by $\M_{T} = \M \backslash B^{-1}(\M_+ \times \M_- \times (-\infty, T))$ for any $T \in [R, \infty)$.

\begin{definition}[Compatible Section]
    We say $\{\sigma_{+-}, \sigma\}$ is a \emph{compatible section} if:
    \begin{itemize}
        \item $\sigma$ is a section of $\obstructionBundle \to \M_{T_0}$.
        \item $\sigma_{+-}$ is a section of $\obstructionBundlePM \to \M_+ \times \M_- \times [T_0, \infty)$.
    \end{itemize} 
    and they satisfy:
\begin{enumerate}[label = ($p$\arabic*)] 
\item \label{sigma1} (Compatibility with the interior) $\sigma_{+-} \circ B =  B^\sharp \circ \sigma$ over $B^{-1}(\M_+ \times \M_- \times [T_0, T_0 + 1])$. 
\item \label{sigma2} (Compatibility with lower strata) $\sigma_{+-} = (\sigma_+, \sigma_-)$ for $T \gg T_0$ sufficiently large.
\end{enumerate}
We define the perturbed moduli space to be $$\mathcal Z = \sigma^{-1}(0) \sqcup \sigma_{+-}^{-1}(0)/\sim$$
where $\sim$ is given by \ref{sigma3}.

\end{definition}

\begin{lemma}[Perturbation Section]\label{lemma: C1small}
    Assume the same conditions as Theorem~\ref{thm: gluing}. Suppose that $\M_\pm$ are compact, and that $\sigma_\pm$ are transverse to the zero sections. Let  $\{\sigma_{+-}, \sigma\}$ be a compatible section such that:
    \begin{enumerate}[label = ($p$\arabic*), start = 3] 
        \item \label{sigma3} ($C^1$-small) $\sigma_{+-}$ and $\sigma$ are $C^1$-small;
        \item \label{sigma4} (Transverse) $(\mathfrak s + \sigma_{+-})$ and $\sigma$ are transverse to the zero sections, respectively.
    \end{enumerate}
    Then $\mathcal Z$ is a smooth manifold of dimension $\op{virdim} \M = \ind p - \ind q - 1$, and we call $\{\sigma_{+-}, \sigma\}$ a \emph{perturbation section}.
If $\op{virdim} \mathcal M = 1$, then for any generic $T > T_0$, the {\em $T$-boundary} of $\mathcal Z$ defined by 
    $$\partial^T \mathcal Z := (\M_+ \times \M_- \times \{T\}) \cap \mathcal Z$$
is cobordant to
$$\sigma_+^{-1}(0) \times \sigma_-^{-1}(0).$$
\end{lemma}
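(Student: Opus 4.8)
The plan is to analyze $\mathcal Z$ locally in two regions and then glue the pictures. First I would handle the interior: on $\sigma^{-1}(0) \subset \M_{T_0}$, condition \ref{sigma4} says $\sigma$ is transverse to the zero section of $\obstructionBundle \to \M_{T_0}$, so $\sigma^{-1}(0)$ is a smooth manifold of dimension $\dim \M_{T_0} - \op{rank}\obstructionBundle$. Since $\M$ is cleanly cut out, $\dim \M = \ind p - \ind q - 1$ and $\op{rank}\obstructionBundle = \dim\obstructionBundle_u = \dim\coker D_u$, while $\dim\M$ (as a manifold) equals $\ind p - \ind q - 1 + \op{rank}\obstructionBundle$; hence $\sigma^{-1}(0)$ has dimension $\op{virdim}\M = \ind p - \ind q - 1$. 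Similarly, near the boundary, using the breaking map $B$ (Theorem~\ref{thm: gluing}), $\M_T \cap B^{-1}(\M_+ \times \M_- \times [T_0,\infty))$ is identified with an open subset of $\obstructionSectionS^{-1}(0)$, and under $B^\sharp$ the section $\sigma$ pulls back (by \ref{sigma1}) to $\sigma_{+-}$ on a collar; then $\sigma_{+-}^{-1}(0) \cap (\mathfrak s = 0)$ corresponds to $(\mathfrak s + \sigma_{+-})^{-1}(0)$ via the standard fiberwise translation trick, and \ref{sigma4} makes $(\mathfrak s + \sigma_{+-})$ transverse, so $(\mathfrak s+\sigma_{+-})^{-1}(0)$ is a smooth manifold. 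The $C^1$-smallness \ref{sigma3} guarantees the perturbed loci stay inside the charts where Theorem~\ref{thm: gluing} applies and that the glued object is still contained in the appropriate neighborhoods.

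Next I would check that the two pieces glue compatibly along the overlap $B^{-1}(\M_+ \times \M_- \times [T_0, T_0+1])$. There, \ref{sigma1} says $\sigma_{+-} \circ B = B^\sharp \circ \sigma$, and since $B$ and $B^\sharp$ are the (restriction of the) inverse gluing data from Theorem~\ref{thm: gluing}(2)(3) which are diffeomorphisms / injective bundle maps, the two descriptions of $\mathcal Z$ agree set-theoretically and smoothly on the collar. Hence $\mathcal Z$, defined as $\sigma^{-1}(0) \sqcup \sigma_{+-}^{-1}(0)/\!\sim$, is a smooth manifold of dimension $\op{virdim}\M = \ind p - \ind q - 1$; this is the first assertion. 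I would spell out that near a point of the overlap the local model is the product of the $\mathfrak s+\sigma_{+-}=0$ solution set with the half-line $[T_0,\infty)$ in the $T$-variable, so no corners or boundary are created in the interior — $\mathcal Z$ is an honest (boundaryless) manifold when $\op{virdim}\M \le 0$, and when $\op{virdim}\M = 1$ it is a $1$-manifold, possibly with ends going to $T \to \infty$.

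For the cobordism statement when $\op{virdim}\M = 1$: $\mathcal Z$ is then a $1$-dimensional manifold. For generic $T > T_0$ the slice $\{T = \op{const}\} = \M_+ \times \M_- \times \{T\}$ is transverse to $\mathcal Z$ (Sard), so $\partial^T\mathcal Z$ is a finite set of points. The region of $\mathcal Z$ with $T$-coordinate in $[T, T']$ for $T' \gg T$ provides a compact $1$-manifold-with-boundary whose two boundary components are $\partial^T\mathcal Z$ and $\partial^{T'}\mathcal Z$; hence $\partial^T\mathcal Z$ is cobordant to $\partial^{T'}\mathcal Z$ for all large $T'$. By \ref{sigma2}, for $T' \gg T_0$ the section $\sigma_{+-}$ equals $(\sigma_+,\sigma_-)$, so on that region $\mathcal Z \cap \{T = T'\}$ is exactly $(\mathfrak s + (\sigma_+,\sigma_-))^{-1}(0) \cap \{T=T'\}$; but by Theorem~\ref{thm: gluing}\ref{G1}, $\mathfrak s \to 0$ in $C^1$ as $T \to \infty$, so for $T'$ large enough $(\mathfrak s + (\sigma_+,\sigma_-))^{-1}(0) \cap \{T = T'\}$ is a small perturbation of, hence diffeomorphic to, $\sigma_+^{-1}(0) \times \sigma_-^{-1}(0)$ (which is a finite set of points since each $\sigma_\pm$ is transverse and the relevant virtual dimensions are $0$). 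Chaining the cobordisms gives that $\partial^T\mathcal Z$ is cobordant to $\sigma_+^{-1}(0)\times\sigma_-^{-1}(0)$.

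The main obstacle I anticipate is the bookkeeping at the interface: making precise that the quotient $\sigma^{-1}(0)\sqcup\sigma_{+-}^{-1}(0)/\!\sim$ is genuinely a smooth manifold rather than merely a topological one, i.e. that the transition between the ``interior chart'' ($\sigma$ on $\M_{T_0}$) and the ``gluing chart'' ($\mathfrak s + \sigma_{+-}$ on $\openSubset_+\times\openSubset_-\times[R,\infty)$) is smooth. This hinges on the $C^1$-regularity of $B$, $B^\sharp$, $\mathfrak s$ (Theorem~\ref{thm: gluing}\ref{G1} and (2)(3)) and on \ref{sigma1} holding on a full collar, not just a single slice; I would want to verify that the fiberwise-translation identification ``$\{\mathfrak s = 0,\ \sigma_{+-}=0\} \leftrightarrow \{\mathfrak s + \sigma_{+-} = 0\}$'' is compatible with the clean-cut-out structure of $\mathfrak s$ (so that $\mathfrak s + \sigma_{+-}$ being transverse is even a meaningful perturbation of a clean section), and that genericity of $T$ for the slicing can be achieved simultaneously with all the above.
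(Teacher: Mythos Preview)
Your overall strategy matches the paper's: check smoothness on the interior chart and on the boundary chart separately, show they agree on the collar $[T_0,T_0+1]$, then run the $T$-slicing cobordism and use $\mathfrak s\to 0$ in $C^1$ together with \ref{sigma2} to identify the large-$T$ slice with $\sigma_+^{-1}(0)\times\sigma_-^{-1}(0)$. Your cobordism paragraph is correct and is exactly what the paper does.

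The genuine gap is the overlap step, which you rightly flag as the main obstacle. The sentence ``$\sigma_{+-}^{-1}(0)\cap\{\mathfrak s=0\}$ corresponds to $(\mathfrak s+\sigma_{+-})^{-1}(0)$ via the standard fiberwise translation trick'' is not a general fact and no such trick applies: for sections $a,b$ of a bundle, $\{a=0\}\cap\{b=0\}$ is typically a \emph{proper} subset of $\{a+b=0\}$. What makes the equality hold here is (G3)(ii) of Theorem~\ref{thm: gluing}, whose transversality half you never invoke. On the collar, condition \ref{sigma1} forces $\sigma_{+-}$ to take values in the subbundle $\op{im}(i^\sharp\circ j^\sharp)\simeq\obstructionBundle\subset\obstructionBundle_{+-}$, and (G3)(ii) says $\mathfrak s$ is \emph{transverse} to this subbundle. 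Split locally $\obstructionBundle_{+-}=\obstructionBundle\oplus\obstructionBundle^\perp$ and write $\mathfrak s=(a,b)$; transversality makes the derivative of $b$ in the directions normal to $\mathfrak s^{-1}(0)$ onto, so $\{b=0\}=\mathfrak s^{-1}(0)$ near the collar. Since $\sigma_{+-}$ has zero $\obstructionBundle^\perp$-component there and is $C^1$-small by \ref{sigma3}, adding it does not move $\{b=0\}$, hence $(\mathfrak s+\sigma_{+-})^{-1}(0)\subset\mathfrak s^{-1}(0)=\op{im}(B)$. Combined with the trivial inclusion and \ref{sigma1}, this gives $B(\sigma^{-1}(0))=(\mathfrak s+\sigma_{+-})^{-1}(0)$ on the collar. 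This is precisely the paper's argument (stated in one line in the proof, and spelled out in local coordinates in Section~\ref{section: construction of a perturbation section}(b)); once you insert it, your proof is complete.
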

 \begin{proof}
     By \ref{sigma3}, over $(T_0, T_0 + 1)$ we have $\sigma^{-1}(0) \subset (\obstructionSectionS + \sigma)^{-1}(0)$. 
     Since $\obstructionSectionS$ is transverse to $\obstructionBundle$ and $\sigma_{+-}$ is $C^1$-small,
     over $(T_0, T_0 + 1)$, we have $\sigma^{-1}(0) = (\obstructionSectionS + \sigma)^{-1}(0)$. 
     Hence for any $T, T' > T_0$, we have $\partial^T \mathcal Z$ is cobordant to $\partial^{T'} \mathcal Z$.
     When $T'$ is sufficiently large, by \ref{sigma4} and the fact that $\obstructionSectionS \to 0$ as $T\to \infty$ in $C^1$-norm from \ref{G1} in Theorem~\ref{thm: gluing}, we have $\partial^T \mathcal Z$ is in bijection with $\sigma_+^{-1}(0) \times \sigma_-^{-1}(0)$.
 \end{proof}

 % Let $T_0$ be the gluing parameter, i.e., $T_0 \in [R, \infty)$. Denote by $\{T = T_0\} = \M_+ \times \M_- \times \{T_0\}$. We now construct a cobordism from $B(\sigma^{-1}(0)) \cap \{T = T_0\}$ to $\sigma_+^{-1}(0) \times \sigma_-^{-1}(0)$ for large, generic $T_0$. To this end, we construct a section $\sigma_{+-}$ of $\obstructionBundlePM$ such that: \todo{separate P4}
% \begin{enumerate}[label = ($p$\arabic*)] 
% \item \label{sigma1} (Transversality) $(\sigma_{+-} + \mathfrak s)|_{\M_+ \times \M_- \times [T_0, \infty)}$ is transverse to the zero section.
% \item \label{sigma2} (Compatibility with lower strata) $\sigma_{+-} = (\sigma_+, \sigma_-)$ for $T$ sufficiently large.
% \item \label{sigma3} (Compatible with the interior) $\sigma_{+-} \circ B =  B^\sharp \circ \sigma$ over $B^{-1}(\{T = T_0\})$.
% \item \label{sigma4} (No extra zeros) $(\sigma_{+-} + \mathfrak s)^{-1}(0) \cap \{ T = T_0\} = B(\sigma^{-1}(0)) \cap \{ T = T_0\}$.
% \end{enumerate}
% Note that since $\lim_{T \to \infty} \| \mathfrak s \|_{C^1} = 0$, \ref{sigma2} implies $\sigma_+^{-1}(0) \times \sigma_-^{-1}(0)$ serves as a boundary of $\sigma_{+-}^{-1}(0)$. Condition \ref{sigma3} is the same as saying $(\sigma_{+-} + \mathfrak s)^{-1} \circ B = B^\sharp \circ \sigma$ over $B^{-1}(\{T = T_0\})$, since $\mathfrak s$ is $0$ on the image of $B$. Then $\sigma_{+-}^{-1}(0)$ serves as the desired cobordism.

\subsection{Construction of a perturbation section} \label{section: construction of a perturbation section}
We now construct a perturbation section. Let $\sigma$ be a transverse section of $\obstructionBundle\to \M$.
First, extend $B^\sharp \circ \sigma$, viewed as a section of $\obstructionBundlePM|_{\op{im}B}$, to a section $\tilde\sigma$ of $\obstructionBundlePM$ such that $\tilde\sigma$ vanishes outside a tubular neighborhood $\mathcal{U}$ of the image of $B$. Pick $T_1 > T_0$. We define $\sigma_{+-} = (\sigma_+, \sigma_-)$ over the region $T \geq T_1$; $\sigma_{+-} = \tilde\sigma$ over the region $T_0 \leq T \leq T_0 + 1$; and we choose $\sigma_{+-}$ to be a generic section over the region $\{ T_0 + 1 \leq T \leq T_1\}$. We now verify the requirements \ref{sigma1}-\ref{sigma4}. Requirements \ref{sigma1}-\ref{sigma3} are straightforward by construction. For \ref{sigma4}:

\begin{enumerate}[label=(\alph*)]
    \item Over the region $T \geq T_1$, since $\sigma_\pm$ are transverse to the zero section and $\|\mathfrak{s}\|_{C^1} \to 0$ as $T \to \infty$, $\sigma_{+-} + \mathfrak{s}$ is transverse to the zero section when $T_1$ is sufficiently large.
    \item Over the region $T_0 \leq T \leq T_0 + 1$, let $x$ denote the coordinate of $\op{im}(B)$, and $y$ denote the coordinate of the normal direction. Then $\tilde\sigma(x, y) = (\beta(y) \sigma(x), 0) \in \obstructionBundle \oplus \obstructionBundle^{\perp} = \obstructionBundlePM$, where $\beta$ is a bump function that equals $1$ around $y = 0$, and $\obstructionBundle^{\perp}$ is a normal subbundle of $\obstructionBundle$ within $\obstructionBundlePM$. The support of $\beta$ is detailed below. Suppose $\mathfrak{s}(x, y) = (a(x, y), b(x, y))$. Then $\mathfrak{s}^{-1}(0) = \{ y = 0\}$ implies $b(x, 0) = 0$ and $a(x, 0) = 0$. Note that $\frac{\partial b}{\partial y}(x, 0)$ is surjective by (G3)(ii). Therefore, if $y$ lies in a sufficiently small neighborhood $W$ of $0$, then $\{b(x, y) = 0\} = \{ y = 0\}$. Thus, $\{\tilde\sigma + \mathfrak{s} = 0\} = \{b(x, y) = 0, ~ \beta(y) \sigma(x) + a(x, y) = 0\} = \{(x, 0) \mid \sigma(x) = 0\}$. If $y$ is outside $W$, $\tilde\sigma = 0$ as $\beta(y)$ can be chosen to have support in $W$. Hence, $\{\tilde\sigma + \mathfrak{s} = 0\} = \{(x, 0) \mid \sigma(x) = 0\}$ holds. Thus, to verify $\obstructionSectionS + \sigma_{+-}$ is transverse to the zero section as in \ref{sigma4}, it suffices to check over $y = 0$, which is guaranteed by the fact that $\sigma$ is transverse to zero in $\obstructionBundle$ and $\frac{\partial b}{\partial y}(x, 0)$ is onto.
\end{enumerate}

% \begin{remark}\label{rmk: c1small} \todo{remove this, and correct the citation to the previous lemma}
%     In constructing $\tilde\sigma$, over the region $T_0 \leq T \leq T_1$, we use the explicit form $\tilde\sigma(x, y) = (\beta(y) \sigma(x), 0)$. 
%     However, a closer inspection reveals that it is sufficient for $\tilde\sigma$ to be $C^1$-small.
% \end{remark}

\subsection{An example}
\begin{example}\label{example: upright torus}
We compute the Morse homology of the \textit{upright torus}. Let $p$, $q$, and $r$ denote critical points of the height function, as depicted in Figure~\ref{fig:torus}. Notably, $\M_- = \M(p, q)$ is transversely cut out and contains two elements: the front trajectory $u$ and the back trajectory $u'$. These trajectories cancel each other in the definition of the differential $\partial$ (see Section~\ref{section: morse homology}), hence $\langle \partial p, q \rangle = 0$.

The moduli space $\M_+ = \M(q, r)$ consists of two elements: the left trajectory $v$ and the right trajectory $v'$. While $\M_+$ is cleanly cut out, it is not transverse and has an obstruction bundle of rank $1$, denoted as $\obstructionBundle_+$. We choose a section $\sigma_+$ of $\obstructionBundle_+ \to \M_+$ that is transverse to the zero section. This is equivalent to choosing two non-zero numbers: $\sigma_+(v) \in \R - \{0\}$ and $\sigma_+(v') \in \R - \{0\}$, if we trivialize the bundle $\obstructionBundle_+$. Therefore, the perturbed moduli space from $q$ to $r$ is $\sigma_+^{-1}(0) = \emptyset$.

The moduli space $\M = \M(p, r)$ is empty and hence transversely cut out. Despite this, we still need to perturb it within its boundary chart: $\obstructionBundle_+ \to \M_+ \times \M_- \times [R, \infty)$. We choose a transverse section $\sigma_{+-}$ of $\obstructionBundle_+ \to \M_+ \times \M_- \times [R, \infty)$ such that it equals $\mathfrak{s}$ near $T = R$ and matches $\sigma_+$ as $T \to \infty$, i.e., $\lim_{T \to \infty} \sigma_{+-}(u_+, u_-, T) = \sigma_+(u_+)$.

Note that $\M_+ \times \M_- = \{(v, u), (v', u), (v, u'), (v', u')\}$. Referring to Figure~\ref{fig: upright torus obstruction section}, precisely one of the two pairs $\{(v, u), (v, u')\}$ glues to a perturbed trajectory from $p$ to $r$. The same holds for the two pairs $\{(v', u), (v', u')\}$. In total, there are two perturbed trajectories from $p$ to $r$, and they cancel each other out. Let $\partial$ be the boundary operator (see Section~\ref{section: morse homology}). Hence, $\langle \partial p, r \rangle = 0$. Therefore, $\partial p = 0$. In the same way, one can show that $\langle \partial q, s \rangle = 0$, and hence $\partial q = 0$. Finally, it is obvious that we have $\partial r = 0 = \partial s$.

\begin{figure}
    \centering
    \begin{subfigure}[b]{.4\textwidth}
    \centering
    \begin{tikzpicture}[scale = 0.8]
    \draw (0,0) ellipse (2cm and 3cm);
    \draw (-0.1, 1.2) .. controls (0.5,1) and (0.5,-1) .. (-0.1, -1.2);
    \draw (0.1, 1) .. controls (-0.5, 1) and (-0.5, -1) .. (0.1,-1);

    \draw[red, fill] (-0, 3) circle (0.05);
    \node[anchor = south] at (0, 2.9) {$p$};
    \draw[red, fill] (0, 1.05) circle (0.05);
    \node[anchor = south] at (0, 1.1) {$q$};

    \draw[red, fill] (0, -1.05) circle (0.05);
    \node[anchor = north] at (0, -1.1) {$r$};

    \draw[red, fill] (0, -3.) circle (0.05);
    \node[anchor = north] at (0, -3.1) {$s$};
    
    % p to q front
    \draw[blue] (0.2, 2.98) .. controls (-0.7, 3.1) and (-0.6, 1).. (0, 1.05);
    \node[] at (-0.7, 2) {$u$};

    % p to q back
    \draw[dotted, blue] (0.2, 2.98) .. controls (0.75, 2.3) and (0.55, 0.9).. (0, 1.05);
    \node[] at (0.7, 2) {$u'$};

    % q to r left
    \draw[red] (0, 1.05) .. controls (-0.5, 1) and (-0.7, -1).. (0, -1.05);
    \node[] at (-0.7, 0) {$v$};

    % q to r right
    \draw[dotted, red] (0, 1.05) .. controls (0.5, 1) and (0.7, -1).. (0, -1.05);
    \node[] at (0.7, 0) {$v'$};
    
    \end{tikzpicture}
    \caption{The upright torus}
    \label{fig:torus}
    \end{subfigure}
    \begin{subfigure}[b]{.4\textwidth}
    \centering
    \begin{tikzpicture}[rotate = 90, scale = 0.9]
    \draw[smooth] (0,1) to[out=30,in=150] (2,1) to[out=-30,in=210] (3,1) to[out=30,in=150] (5,1) to[out=-30,in=30] (5,-1) to[out=210,in=-30] (3,-1) to[out=150,in=30] (2,-1) to[out=210,in=-30] (0,-1) to[out=150,in=-150] (0,1);
    \draw[smooth] (0.4,0.1) .. controls (0.8,-0.25) and (1.2,-0.25) .. (1.6,0.1);
    \draw[smooth] (0.5,0) .. controls (0.8,0.2) and (1.2,0.2) .. (1.5,0);
    \draw[smooth] (3.4,0.1) .. controls (3.8,-0.25) and (4.2,-0.25) .. (4.6,0.1);
    \draw[smooth] (3.5,0) .. controls (3.8,0.2) and (4.2,0.2) .. (4.5,0);
    \end{tikzpicture}
    \caption{The upright genus two surface}
    \label{fig:enter-label}
    \end{subfigure}
    \caption{Example~\ref{example: upright torus} and Example~\ref{example: upright genus two}}
\end{figure}
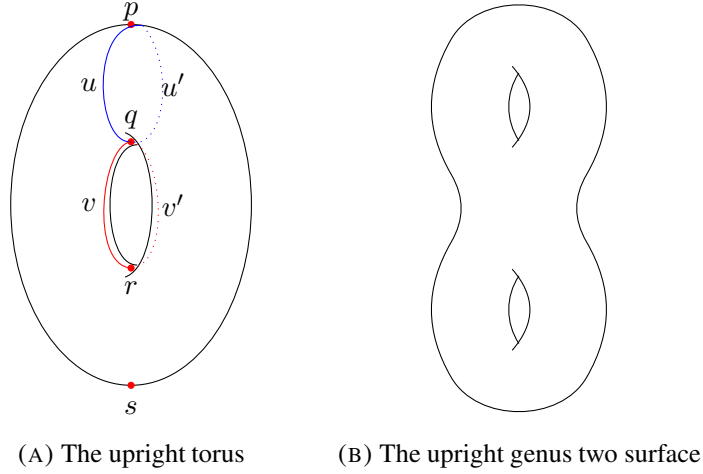

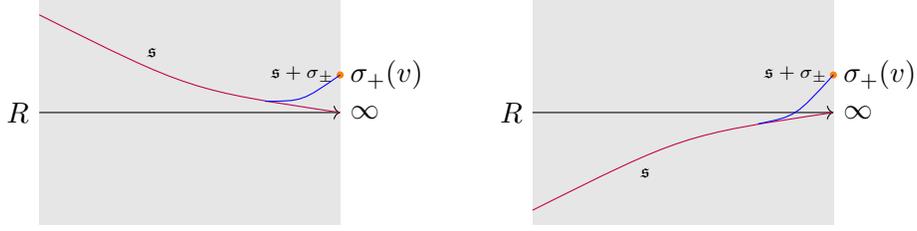
\begin{figure}
    \centering
    \begin{subfigure}[t]{.45\textwidth}
    \begin{tikzpicture}[scale = 1]
    \fill [black!10] (0,0) rectangle (4,3);
    \draw[->] (0,1.5) -- (4,1.5);
    \node[anchor = east] at (0, 1.5) {$R$};
    \node[anchor = west] at (4, 1.5) {$\infty$};
    \node[anchor = west] at (4, 2) {$\sigma_+(v)$};
    \draw [fill, orange] (4, 2) circle (0.04);
    % obstruction section s
    \draw[purple] (0,2.8) .. controls (2,1.8) .. (4, 1.5);
    \node[] at (1.5, 2.3) {\tiny $\mathfrak{s}$};
    \node[] at (3.5, 2){\tiny $\obstructionSectionS + \sigma_\pm$};
    % s + sigma
    \draw[blue] (3,1.65) .. controls (3.5, 1.65) .. (4, 2);
    \end{tikzpicture}
    \caption{The obstruction bundle restricted to ${\{(v,u)\}\times [R,\infty)}$. 
    After perturbation, $\obstructionSectionS + \sigma_{+-}$ has no zero.}
    \label{fig:enter-label}
    \end{subfigure} \quad \quad
    \begin{subfigure}[t]{.45\textwidth}
    \begin{tikzpicture}[scale = 1]
    \fill [black!10] (0,0) rectangle (4,3);
    \draw[->] (0,1.5) -- (4,1.5);
    \node[anchor = east] at (0, 1.5) {$R$};
    \node[anchor = west] at (4, 1.5) {$\infty$};
    \node[anchor = west] at (4, 2) {$\sigma_+(v)$};
    \draw [fill, orange] (4, 2) circle (0.04);
    % obstruction section s
    \draw[purple] (0,-2.8+3) .. controls (2,-1.8+3) .. (4, -1.5+3);
    \node[] at (1.5, -2.3+3) {\tiny $\obstructionSectionS$};
    \node[] at (3.5, 2){\tiny $\obstructionSectionS + \sigma_\pm$};
    % s + sigma
    \draw[blue] (3, 1.65-0.3) .. controls (3.5, 1.65-0.2) .. (4, 2);
    \end{tikzpicture}
    \caption{The obstruction bundle restricted to ${\{(v, u')\}\times [R,\infty)}$. 
    After perturbation, $\obstructionSectionS + \sigma_{+-}$ has one zero.}
    \label{fig:enter-label}
    \end{subfigure}
    \caption{The obstruction section $\mathfrak s(v, u,T) = - \mathfrak s(v, u',T)$, as the trajectories $u$ and $u'$ point in opposite directions near $q$. We choose the perturbation section $\sigma_{+-}$ such that $\sigma_{+-} = 0$ when $T$ is near $R$, and $\sigma_{+-} = \sigma_+$ when $T$ is sufficiently large.}
    \label{fig: upright torus obstruction section}
\end{figure}

\end{example}

\begin{remark}
The upright torus can be seen as a special moment of $1$-parameter family of deformations of the torus, as described in Lemma~\ref{lemma: generic family}. The four possible choices of $(\sigma_+(v), \sigma_+(v'))$ correspond to four ways of tilting the torus: tilting it forward, tilting it backward, bending the top and the bottom forward, bending the top and bottom backward.
\end{remark}

\begin{example}[Upright higher genus Riemann surfaces]\label{example: upright genus two}
    For a genus $2$ Riemann surface, the three middle moduli spaces connecting adjacent critical points of index $1$ are obstructed. We leave this example for motivated readers. On the other hand, a simpler calculation using obstruction bundle gluing, but without abstract perturbation, is available in \cite{bao2024involutive}, in the spirit of perturbing the Morse function.
\end{example}

\section{Linear gluing} 
To prove the gluing Theorem~\ref{thm: gluing}, we need the linear version of gluing. This section is a straightforward modification of Section 9.2 in \cite{hutchings2009gluing} for the Morse homology setting.

\begin{definition}
For each pair of  critical points $p, q \in \crit{f}$, a \emph{linear tuple} is a tuple $\mathfrak{O}(p,q) = (E, a, b, g, \nabla, \bs\psi)$, where:

\begin{enumerate}
    \item $E$ is a rank $n$ vector bundle over $\R$;
    \item $a$ and $b$ are real numbers with $a < b$;
    \item The intervals $(-\infty, a] \subset \R$ and $[b, +\infty) \subset \R$ are referred to as the negative and positive ends, respectively. These intervals are identified with $(-\infty, 0]$ and $[0, \infty)$, respectively, via translations;
    \item $\bs\psi = (\psi_q, \psi_p)$ consists of bundle isomorphisms:
    \begin{enumerate}
        \item $\psi_q: E|_{[b, \infty)} \simeq T_q M \times [0, \infty)$, and
        \item $\psi_p: E|_{(-\infty, a]} \simeq T_p M \times (-\infty, 0]$,
    \end{enumerate}
    where the base maps are the aforementioned identifications;
    \item $g$ is a bundle metric that agrees with the metric on $T_q M$ and $T_p M$ at the ends via the trivialization $\bs\psi$;
    \item $\nabla$ is a connection on the bundle $E$ that is compatible with $g$.
\end{enumerate}
\end{definition}

\begin{definition}
    Given $\mathfrak{O}(p,q)$, we define $\mathcal D({\mathfrak{O}(p,q)})$ to be the set of differential operators $D: C^\infty (E) \to C^\infty (E)$ such that for any $\xi \in C^\infty(E)$,
    \[
    (D \xi)(s) = \nabla_s \xi(s) + H(s)\xi(s),
    \]
    where $H(s)\in \op{End}(E_s)$ is a linear map satisfying $\lim_{s\to \infty} H(s) = \hess{f}(q)$ and $\lim_{s \to -\infty} H(s) = \hess{f}(p).$ 
\end{definition}
The following is a standard result. See, for example, \cite{schwarz1995cohomology}.
\begin{lemma}
    The map $D$ extends to a Fredholm operator $L_1^2(E) \to L^2(E)$.
\end{lemma}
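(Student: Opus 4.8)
The plan is to establish Fredholmness of $D$ by reducing to the model operators at the two ends, where the equation becomes (after trivialization via $\bs\psi$) a constant-coefficient operator $\tfrac{d}{ds} + \hess{f}(q)$ on $[0,\infty)$ and $\tfrac{d}{ds} + \hess{f}(p)$ on $(-\infty,0]$. Since $p$ and $q$ are \emph{nondegenerate} critical points of the Morse function $f$, the Hessians $\hess{f}(p)$ and $\hess{f}(q)$ are invertible symmetric matrices, so the asymptotic operators have no kernel; this is precisely the hypothesis that makes the ends "nondegenerate" in the sense of the standard Fredholm theory for operators of the form $\tfrac{d}{ds} + A(s)$ with $A(s)$ converging to invertible symmetric limits.

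First I would recall the local model: on a half-line $[0,\infty)$, the operator $\tfrac{d}{ds} + A$ with $A$ an invertible symmetric $n\times n$ matrix is surjective $L^2_1 \to L^2$ with kernel of dimension equal to the number of negative eigenvalues of $A$ (the decaying solutions $e^{-\lambda s} v_i$ with $\lambda_i > 0$ give the $L^2$ kernel), and on $(-\infty,0]$ the kernel has dimension equal to the number of positive eigenvalues. These are the standard facts underlying the exponential-decay expansions already used in the excerpt (Equations~\eqref{eqn: fourier expansion for solution} and~\eqref{eqn: cokernel explicit fourier expansion}). The key quantitative input is the \emph{a priori estimate}: there is a constant $C$ and a compact set $K \subset \R$ such that $\|\xi\|_{L^2_1} \leq C(\|D\xi\|_{L^2} + \|\xi\|_{L^2(K)})$ for all $\xi \in L^2_1(E)$. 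This follows by patching: on the ends, the invertibility of the limit Hessians gives a genuine estimate $\|\xi\|_{L^2_1} \leq C\|D\xi\|_{L^2}$ once $|s|$ is large enough that $H(s)$ is close to its limit, and on the compact middle region one uses elliptic regularity (here, just the fundamental theorem of calculus applied to the ODE) together with the $\|\xi\|_{L^2(K)}$ term.

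Next I would assemble the pieces: the a priori estimate, combined with the fact that the inclusion $L^2_1(K) \hookrightarrow L^2(K)$ is compact (Rellich on the compact piece, since solutions decay exponentially on the ends so the "tail" contribution to compactness is controlled), shows that $D$ has finite-dimensional kernel and closed range by the standard functional-analytic lemma. The same argument applied to the formal adjoint $D^* = -\nabla_s + H(s)^{*}$, which has the same structure with the same invertible asymptotic limits, shows $D^*$ has finite-dimensional kernel, and since $\op{coker} D \cong \ker D^*$, $D$ has finite-dimensional cokernel. Hence $D$ is Fredholm. I would either cite \cite{schwarz1995cohomology} or \cite{salamonMorse} for these standard computations, since the excerpt already invokes them, and simply note that the linear-tuple data $\mathfrak{O}(p,q)$ encodes exactly the asymptotic structure needed for that theory to apply verbatim.

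The main obstacle is mostly bookkeeping rather than a genuine difficulty: one must check that the definition of $\mathcal D(\mathfrak{O}(p,q))$ — in particular the requirement that $H(s)$ converges to $\hess{f}(q)$ and $\hess{f}(p)$ in $\op{End}(E_s)$ \emph{via the trivializations $\psi_q, \psi_p$ and compatibly with the metric $g$} — genuinely puts $D$ into the class covered by the constant-asymptotic Fredholm theory, and that the $L^2_1$ and $L^2$ norms defined using $g$ and $\nabla$ are equivalent to the flat ones on the ends. Since $g$ agrees with the fixed metrics on $T_pM, T_qM$ at the ends and $\nabla$ is $g$-compatible, this equivalence is immediate, so in the end the proof is a citation plus a short remark that the hypotheses match. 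If a self-contained argument is wanted, the one nontrivial verification is the a priori estimate on the ends, which I would spell out using the spectral decomposition of the limiting Hessian as above.
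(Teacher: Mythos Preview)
Your proposal is correct and matches the paper's approach: the paper does not prove this lemma at all but simply states it as a standard result and cites \cite{schwarz1995cohomology}. Your sketch of the a priori estimate on the ends plus Rellich compactness on the compact middle, applied also to the adjoint, is exactly the standard argument one finds in that reference, and your final remark that the proof reduces to a citation plus checking that the data of $\mathfrak{O}(p,q)$ matches the hypotheses of the cited theory is precisely what the paper does (implicitly).
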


Given linear tuples $\mathfrak{O}_+ = \mathfrak{O}(p,q) = (E_+, a_+, b_+, g_+, \nabla_+, \bs\psi_+)$ and $\mathfrak{O}_- = \mathfrak{O}(q,r) = (E_-, a_-, b_-, g_-, \nabla_-, \bs\psi_-)$, and a large number $R > 0$, we can glue them to obtain a linear tuple $\mathfrak{O}$ as follows:
We glue the subset $(-\infty, b_- + 2R]$ of the base of $E_-$ and the subset $[-2R + a_+, \infty)$ of the base of $E_+$ by identifying them at the boundary.
We define $E$ such that $E|_{(-\infty, 0]} = E_-|_{(-\infty, b_- + 2R]}$ and $E|_{[0,\infty)} = E_+|_{[-2R + a_+, \infty)}$, with obvious translations that identify the domains, and at $0$ they are identified via $\bs\psi_+$ and $\bs \psi_-$. The metric $g$ and the connection $\nabla$ are constructed in a straightforward way.

For any $D_\pm \in \mathcal D(\mathfrak{O}_\pm)$, we can glue them with some choices to obtain  an operator $D\in \mathcal D(\mathfrak{O})$. Specifically, we consider an operator $D\in \mathcal D(\mathfrak{O})$ such that over $(-\infty, -R]$, $D$ agrees with $D_-$ (after the above translation), and over $[R, \infty)$, $D$ agrees with $D_+$ (after the above translation). 
Over $[-2R, 2R]$, $E$ is identified via $(\bs\psi_+, \bs\psi_-)$ with $T_q M \times [-2R, 2R]$, so we can define the zeroth-order operators $D-D_\pm$. 

Let $V_\pm$ be finite-dimensional subspaces of $L^2(E)$, $W_\pm$ be the orthogonal complements of $V_\pm$, and $\Pi_{W_\pm}: L^2(E) \to W_\pm$ be the orthogonal projection.  

% Let $u_- \in \widetilde \M(p,q)$ and $u_+ \in \widetilde \M(q,r)$. Define $u: \R \to M$ to be the pregluing of $u_+$ and $u_-$ by
% $$u(s)=
%     \begin{cases}
%         u_-(s) & \text{if } s \leq -T \\
%         \beta_+(s) u_+(s) + \beta_-(s) u_-(s) & \text{if } -T \leq s \leq T \\
%         u_+(s) & \text{if } s \geq T
%     \end{cases},$$
% where
% \begin{enumerate}
% \item $\beta_+: \R \to [0,1]$ is a cutoff function such that $\beta_+(s) = 0$ for $s \leq -T-1$ and $\beta_+(s) = 1$ for $s \geq -T$;
% \item $\beta_-: \R \to [0,1]$ is a cutoff function such that $\beta_-(s) = 1$ for $s \leq T$ and $\beta_-(s) = 0$ for $s \geq T+1$.
% \end{enumerate}
% Note that $\beta_\pm$ differs from the ones in the previous section due to translations.

% Let $D_\pm = D_{u_\pm}: L_1^2(u_\pm^* TM) \to L^2(u_\pm^* TM)$ and $D = D_u: L_1^2(u^* TM) \to L^2(u^* TM)$ be the linearized operator of $\morsedbar$ at $u_\pm$, and at $u$. Then for any $\psi = \beta_+ \psi_+ + \beta_- \psi_- \in L_1^2(u^* TM)$,
% $$D\psi = \beta_+ \Theta_+ + \beta_- \Theta_-,$$
% where
% $$\Theta_+ (\psi_+, \psi_-) = D_+ \psi_+ + \beta_-'\psi_- + (D-D_+)^+ \psi_+,$$
% $$\Theta_- (\psi_+, \psi_-) = D_- \psi_- + \beta_+'\psi_+ + (D-D_-)^- \psi_-,$$
% and we interpret $(D-D_\pm)^\pm$ to be $0$ off the support of $\beta_\pm$.

\begin{lemma}[Linear Gluing -- Proposition 9.3 in \cite{hutchings2009gluing}]
\label{lemma: linear gluing}
Suppose that $\Pi_{W_\pm} D_\pm : L^2(E) \to W_\pm$ are surjective. If the gluing parameter $T$ is sufficiently large, and if over the region $[-2R, 2R]$, $|D-D_+|$ and $|D-D_-|$ are sufficiently small with respect to the operator norm, then we have the short exact sequence:
    $$0 \to \ker D \xrightarrow{} \ker (\Pi_{W_+}  \dPlus) \oplus \ker (\Pi_{W_-}  \dMinus) \xrightarrow{} V_+ \oplus V_- \xrightarrow{h} \coker D \to 0.$$
\end{lemma}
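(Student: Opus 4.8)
The plan is to follow the strategy of Proposition 9.3 in \cite{hutchings2009gluing}, adapting it to the present Morse-theoretic notation, and to construct the four maps in the sequence explicitly before checking exactness at each spot. Throughout I write $\dPlus$, $\dMinus$, $D$ for the glued operators as in the statement, and I fix cutoff functions $\bPlusMinus{T}$ analogous to the ones used in the pregluing construction: $\bPlus{T}$ supported on the $E_+$-end and equal to $1$ for $s \geq$ some constant, and $\bMinus{T}$ supported on the $E_-$-end, so that $\bPlus{T} + \bMinus{T} = 1$ near the neck $[-2R, 2R]$ where $E$ is trivialized via $(\bs\psi_+, \bs\psi_-)$ as $T_qM \times [-2R, 2R]$. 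On this neck region the differences $D - D_\pm$ are genuine zeroth-order operators, and by hypothesis their operator norms there are as small as we like; exponential decay of kernel and cokernel elements of the $D_\pm$ (which holds since $\pm\infty$ are nondegenerate, i.e. $\hess{f}(q)$, $\hess{f}(p)$, $\hess{f}(r)$ are invertible) is the analytic input that makes all the error terms below small for $T \gg 0$.

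First I would define the middle map $\ker(\Pi_{W_+}\dPlus) \oplus \ker(\Pi_{W_-}\dMinus) \to V_+ \oplus V_-$. Given $(\xi_+, \xi_-)$ in the domain, the element $\Pi_{W_+}\dPlus \xi_+ = 0$ means $\dPlus\xi_+ \in V_+$; the map sends $(\xi_+,\xi_-)$ to $(\dPlus\xi_+, \dMinus\xi_-) \in V_+ \oplus V_-$. The map $\ker D \to \ker(\Pi_{W_+}\dPlus)\oplus\ker(\Pi_{W_-}\dMinus)$ is essentially ``cut and project'': given $\xi \in \ker D$, form $\bPlus{T}\xi$ and $\bMinus{T}\xi$, transplant them to $E_+$ and $E_-$ respectively via the translations, and then project back onto $\ker(\Pi_{W_\pm}\dPlus)$ using the fact that $\Pi_{W_\pm}D_\pm$ is surjective so $\ker(\Pi_{W_\pm}D_\pm)$ has the expected dimension and $D_\pm$ restricted to its orthogonal complement is an isomorphism onto its image; one shows $\Pi_{W_\pm}D_\pm(\bPlusMinus{T}\xi)$ is $O(e^{-cT})$-small because $D$ differs from $D_\pm$ only on the neck and $\frac{d}{ds}\bPlusMinus{T}$ is supported there where $\xi$ is exponentially small, so the correction term has small norm and the projected element is well-defined and injective in $\xi$. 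Finally the map $h: V_+ \oplus V_- \to \coker D$: identify $\coker D$ with $\ker D^*$, and for $(w_+, w_-) \in V_+ \oplus V_-$ send it to the $L^2$-pairing functional $\eta \mapsto \langle w_+, \bPlus{T}\eta\rangle + \langle w_-, \bMinus{T}\eta\rangle$ on $\ker D^*$ (again transplanting); equivalently $h$ is adjoint to the cut-and-project map for $D^*$.

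The bulk of the work is then the exactness verification, done spot by spot. Exactness at $\ker D$: injectivity of the cut-and-project map, which follows since $\bPlus{T}\xi + \bMinus{T}\xi = \xi$ and the corrections are small, so if both projections vanish then $\xi$ is small in $L_1^2$, but $\xi$ genuinely solves $D\xi=0$, and a small genuine solution must be zero once $T$ is large (this uses a uniform-over-$T$ elliptic estimate for $D$ away from its kernel). Exactness at the middle term: if $(\xi_+,\xi_-)$ maps to $(0,0)$ in $V_+\oplus V_-$ then $\dPlus\xi_+=0$ and $\dMinus\xi_-=0$, so $\xi_\pm \in \ker D_\pm$; one then pregluess $\xi := \bPlus{T}\xi_+ + \bMinus{T}\xi_-$ (with transplantation) and Newton-iterates / applies a contraction argument to correct it to a genuine element of $\ker D$ mapping back to $(\xi_+,\xi_-)$ up to the established small corrections — here the surjectivity of $\Pi_{W_\pm}D_\pm$ and the smallness of $D-D_\pm$ on the neck give the needed right inverse with bounded norm. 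Exactness at $V_+\oplus V_-$ and surjectivity of $h$ onto $\coker D$: this is the dual statement, obtained by running the same cut-and-preglue argument for the adjoint operators $D^*, D_\pm^*$, whose cokernels are $\ker D_\pm$ and whose ``$W_\pm$'' data dualize appropriately; a dimension count using the index additivity $\op{ind}D = \op{ind}D_+ + \op{ind}D_-$ (each $D_\pm$ Fredholm by the earlier Lemma) together with the exactness already proven at the other three spots forces exactness at the fourth, so in fact once three of the four exactness statements and the rank count are in hand the last is automatic.

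I expect the main obstacle to be the uniform (in $T$) invertibility estimates: showing that $D$ restricted to the orthogonal complement of (the image of) $\ker D$ has a right inverse whose norm is bounded independently of $T$, and likewise $\Pi_{W_\pm}D_\pm$ on the complement of their kernels — these are exactly the estimates that let the contraction/Newton arguments close and that make ``small genuine solution $\Rightarrow$ zero''. The standard way to get them is a patching argument: glue the bounded right inverses of $D_\pm$ using the cutoffs, estimate the commutator $[D, \bPlusMinus{T}]$ by the sup of $|\frac{d}{ds}\bPlusMinus{T}|$ times the exponential decay on the neck, and absorb the resulting $O(e^{-cT})$ error. Since this is precisely the content of the argument in Section 9.2 of \cite{hutchings2009gluing} and the only change is replacing the symplectic linearized operators there by $\frac{d}{ds} + H(s)$ with $H(\pm\infty)$ the relevant Hessians, the adaptation is routine but must be stated carefully; I would cite \cite{hutchings2009gluing} for the estimates and give the exactness bookkeeping in full.
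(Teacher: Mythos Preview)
Your proposal is correct and follows the same Hutchings--Taubes strategy that the paper invokes; in fact the paper does not give a proof at all beyond citing \cite{hutchings2009gluing} and writing down the map $h$, so your outline is considerably more detailed than what appears there. Your description of $h$ as the functional $\eta \mapsto \langle w_+,\beta_+\eta\rangle + \langle w_-,\beta_-\eta\rangle$ on $\ker D^*$ is, after unwinding the identifications, exactly the paper's definition $h(v_+,v_-) = [\beta_+v_+ + \beta_-v_-] \in \coker D$.
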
 
\begin{proof}
    This was proven for the pseudoholomorphic curve case in \cite{hutchings2009gluing}. The proof for the Morse trajectory case is essentially the same but simpler, so we omit it. For our application, we explain the map $h: V_+ \oplus V_- \xrightarrow{} \coker D$. 
    Let $\beta: \R \to [0,1]$ be a cutoff function such that $\beta(s) = 0$ for $s \geq 1$ and $\beta(s) = 0$ for $s\leq 1$. Let $\beta_+(s) = \beta(-s/R)$ and $\beta_-(s) = \beta(s/R)$.
    For any $v_\pm \in V_\pm$, $h(v_+,v_-)$ is defined to be the equivalence class of $\beta_+ v_+ + \beta_- v_-$ in the $\coker D$.
\end{proof} 

\begin{remark}
If we take $V_\pm$ to be the orthogonal complement of $\op{im}D_\pm$, then $W_\pm = \op{im}D_\pm$, and hence $\Pi_{W_\pm}D_\pm$ is surjective, and $\ker (\Pi_{W_\pm}\circ D_\pm) = \ker D_\pm$.

\end{remark}
\begin{remark}\label{remark: linear gluing}
Consider the dual of the exact sequence $V_+ \oplus V_- \xrightarrow{h} \coker D \to 0$, identify $\coker D$ with $\ker D^*$, and use the inner product to identify a subspace of $L^2(E)$ with its dual. We obtain $V_+ \oplus V_- \xleftarrow{h^*} \ker D^* \leftarrow 0$, where $h^*$ maps $\mathfrak o \in \ker D^*$ to $(\Pi_{V_+}(\beta_+ \mathfrak o) \in V_+ \oplus V_-, \Pi_{V_-}(\beta_- \mathfrak o))$, with $\Pi_{V_\pm}: L^2(E_\pm) \to V_\pm$ being the orthogonal projection.

We consider the subspace of $L^2(E)$ defined by 
\[
    \widetilde V = \{\beta_+ v_+ + \beta_- v_- ~|~ v_\pm \in V_\pm \}.
\]
Let $\phi: V_+ \oplus V_- \to \widetilde V$ be the map sending $(v_+, v_-) \mapsto \beta_+ v_+ + \beta_- v_-$. Then $\phi$ is an isomorphism (if $R$ is sufficiently large).
The map $h$ is the composition of the map $\phi$ and the orthogonal projection $\widetilde V \to \ker D^*$.

The map $h^*$ is given by the composition of the orthogonal projection map $\Pi_{\widetilde V}: \ker D^* \to \widetilde V$ and the map $\widetilde V \to V_+ \oplus V_-$ sending $\widetilde v \mapsto (\Pi_{V_+}\beta_+ \widetilde v, \Pi_{V_-}\beta_- \widetilde v)$.
Since $h^*$ is injective, the projection map $\Pi := \Pi_{\widetilde V}$ is also injective.

Moreover, let $V = \Pi(\ker D^*)$, and let $W$ be the orthogonal complement of $V$ inside $L^2(E)$. 
\begin{claim}\label{claim: projection surjective}
    $\Pi_W D: L^2(E) \to W$ is surjective.
    \end{claim}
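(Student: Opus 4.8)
The plan is to reduce the surjectivity of $\Pi_W D$ onto $W$ to a transversality-type statement about the relative position of $\op{im} D$ and $V$ inside $L^2(E)$, and then to exploit that $\Pi := \Pi_{\widetilde V}$ is an orthogonal (hence self-adjoint, idempotent) projection.

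First I would record the purely formal reformulation: given $w \in W$, the equation $\Pi_W D\xi = w$ is solvable precisely when $D\xi - w$ lies in $\ker \Pi_W = W^\perp = V$, i.e.\ when $w \in \op{im} D + V$. Since $L^2(E) = W \oplus V$ and trivially $V \subset \op{im} D + V$, surjectivity of $\Pi_W D$ onto $W$ is therefore equivalent to the single identity $\op{im} D + V = L^2(E)$.

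Next comes a codimension count. Because $D$ is Fredholm, $\op{im} D$ is a closed subspace with $\op{im} D = (\ker D^*)^\perp$ and $\op{codim}(\op{im} D) = \dim\coker D = \dim \ker D^*$. On the other hand $V = \Pi(\ker D^*)$, and the injectivity of $h^*$ noted just above the claim says exactly that $\Pi$ restricts to an injection on $\ker D^*$, so $\dim V = \dim \ker D^*$ as well. Adjoining the finite-dimensional space $V$ to the closed finite-codimension space $\op{im} D$ yields again a closed subspace, of codimension $\dim(\op{im} D \cap V)$, so $\op{im} D + V = L^2(E)$ if and only if $\op{im} D \cap V = 0$.

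It then remains to check $\op{im} D \cap V = 0$. Let $v = \Pi\mathfrak o$ with $\mathfrak o \in \ker D^*$, and suppose $v \in \op{im} D = (\ker D^*)^\perp$; then in particular $\langle v, \mathfrak o\rangle = 0$. Since $\Pi$ is a self-adjoint idempotent and $v = \Pi\mathfrak o \in \widetilde V$, we have $\langle v, \mathfrak o\rangle = \langle \Pi\mathfrak o, \mathfrak o\rangle = \langle \Pi\mathfrak o, \Pi\mathfrak o\rangle = \|v\|^2$, hence $v = 0$, which proves the claim. The argument is essentially all formal Hilbert-space bookkeeping; the only point needing genuine care is the second step — namely that $\op{im} D$ is closed of the asserted finite codimension and equals $(\ker D^*)^\perp$ (Fredholmness of $D$), together with $\dim V = \dim \ker D^*$, which is precisely the injectivity of $\Pi|_{\ker D^*}$ already extracted from the injectivity of $h^*$.
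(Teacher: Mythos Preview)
Your proof is correct and rests on the same key computation as the paper's: the orthogonal projection $\Pi$ satisfies $\langle \Pi x, x\rangle = \|\Pi x\|^2$, combined with the injectivity of $\Pi|_{\ker D^*}$. The paper argues by contradiction on the dual side, taking $0\neq w \in W\cap\ker D^*$ and showing $\Pi(w)=0$, whereas you reformulate surjectivity as $\op{im}D + V = L^2(E)$, reduce via a codimension count to $\op{im}D \cap V = 0$, and prove that directly; these two statements are orthogonal complements of one another, so the arguments are genuinely dual rather than different in substance.
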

    \begin{proof}[Proof of Claim~\ref{claim: projection surjective}]
        Suppose not. Then there exists $0 \neq w \in W$ that is orthogonal to $\op{im}(D)$. This implies $w \in \ker D^*$.
        Therefore, $\Pi(w) \in V$. However, since $V$ is orthogonal to $W$, we have $\Pi(w)$ orthogonal to $w$. This implies $\Pi(w) = 0$, contradicting the fact that $\Pi$ is injective.
    \end{proof}
    
    Furthermore, for any $\lambda \in [0,1]$, consider the operator $\alpha_\lambda: \ker D^* \to L^2(E)$ defined by $\mathfrak o \mapsto \lambda \Pi (\mathfrak o) + (1-\lambda) \mathfrak o$. 
    Consider the subspace of $L^2(E)$ defined by $V_\lambda = \alpha_\lambda(\ker D^*)$.
    Let $W_\lambda$ be the orthogonal complement of $V_\lambda$. Then we have the following claim:
    
    \begin{claim}\label{claim: lambda surjective}
        $\Pi_{W_\lambda} D: L^2(E) \to W_\lambda$ is surjective. 
    \end{claim}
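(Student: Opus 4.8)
The plan is to run the same Hilbert-space argument as in the proof of Claim~\ref{claim: projection surjective}, now carrying the convex-combination parameter $\lambda$ through. Concretely, suppose for contradiction that $\Pi_{W_\lambda} D$ is not surjective. Exactly as in the proof of Claim~\ref{claim: projection surjective} (using that $\op{im} D$ has finite codimension, so its image under the continuous projection $\Pi_{W_\lambda}$ is closed), there is a nonzero $w \in W_\lambda$ with $w \perp \op{im}(D)$, hence $w \in \ker D^*$. Since $w \in \ker D^*$, the vector $\alpha_\lambda(w) = \lambda\,\Pi(w) + (1-\lambda)\,w$ lies in $V_\lambda = \alpha_\lambda(\ker D^*)$, and $w \in W_\lambda = V_\lambda^\perp$ forces $\langle w, \alpha_\lambda(w)\rangle = 0$.

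Now I would use the fact, established in Remark~\ref{remark: linear gluing}, that $\Pi = \Pi_{\widetilde V}$ is the orthogonal projection onto $\widetilde V$ and is injective on $\ker D^*$. Orthogonality of the projection gives $\langle w, \Pi(w)\rangle = \|\Pi(w)\|^2 \ge 0$, so the relation above becomes
\[
\lambda\,\|\Pi(w)\|^2 + (1-\lambda)\,\|w\|^2 = 0 .
\]
Both summands are nonnegative since $\lambda \in [0,1]$. If $\lambda < 1$, the second term forces $w = 0$; if $\lambda = 1$, the first term gives $\Pi(w) = 0$, and injectivity of $\Pi$ on $\ker D^*$ again forces $w = 0$. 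Either way this contradicts $w \ne 0$, proving the claim. (The endpoint $\lambda = 1$ recovers Claim~\ref{claim: projection surjective}, and $\lambda = 0$ is the standard cokernel statement since then $V_0 = \ker D^*$; the argument above handles the whole interval uniformly.)

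I do not expect a genuine obstacle here: this step is pure linear algebra in $L^2(E)$ and does not invoke the analytic estimates of Lemma~\ref{lemma: linear gluing}. The only points that require care are (i) checking that $\Pi_{W_\lambda}(\op{im} D)$ is closed so that a nonzero orthogonal complement vector exists, which follows from the finite codimension of $\op{im} D$, and (ii) correctly quoting from Remark~\ref{remark: linear gluing} that $\Pi$ is an \emph{orthogonal} projection (so $\langle w,\Pi w\rangle = \|\Pi w\|^2$) and is injective on $\ker D^*$; these are exactly the two facts that make the convex combination estimate collapse to $w=0$.
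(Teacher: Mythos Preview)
Your proof is correct and follows essentially the same approach as the paper: both argue by contradiction, obtain a nonzero $w\in W_\lambda\cap\ker D^*$, and use $\langle w,\alpha_\lambda(w)\rangle=0$ together with injectivity of $\Pi$ on $\ker D^*$ to derive a contradiction. The only cosmetic difference is that the paper expands $\Pi(w)$ in an orthonormal basis of $\widetilde V$ to write $\lambda\sum_i\langle w,e_i\rangle^2+(1-\lambda)\|w\|^2=0$, whereas you phrase the same quantity as $\lambda\|\Pi(w)\|^2+(1-\lambda)\|w\|^2$.
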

    \begin{proof}[Proof of Claim~\ref{claim: lambda surjective}]
        Suppose not. Then there exists $0 \neq w\in W_\lambda$ that is orthogonal to $\op{im}(D)$, implying $w \in \ker D^*$. Thus, $\alpha_\lambda(w) \in V_\lambda$. Since $V_\lambda$ is orthogonal to $W_\lambda$, we have $\alpha_\lambda(w)$ orthogonal to $w$. 
        Let $e_1, e_2, \dots, e_k$ be an orthonormal basis of $\widetilde V$. Then:
        \begin{align*}
            0 &= \langle w, \alpha_\lambda(w) \rangle \\
            & = \langle w, \sum_{i=1}^k \lambda \langle w, e_i \rangle e_i + (1-\lambda) w \rangle \\
            & = \lambda \sum_{i=1}^k \langle w, \langle w, e_i \rangle e_i \rangle + (1-\lambda) \langle w, w \rangle \\
            & = \lambda \sum_{i=1}^k \langle w, e_i \rangle^2 + (1-\lambda) \langle w, w \rangle.
        \end{align*}
        This implies $\lambda = 1$ and $\Pi(w) = 0$, which contradicts the fact that $\Pi$ is injective.
    \end{proof}
    
    \begin{claim}
        $\ker (\Pi_{W_\lambda} D) = \ker D$.
    \end{claim}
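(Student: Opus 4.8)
The plan is to prove the two inclusions of $\ker(\Pi_{W_\lambda} D) = \ker D$ separately: the forward inclusion is immediate, and the real content is the triviality of an intersection. First I would record that $\ker D \subseteq \ker(\Pi_{W_\lambda} D)$, since $D\xi = 0$ trivially gives $\Pi_{W_\lambda}(D\xi) = 0$. For the reverse inclusion I would reformulate: $\xi \in \ker(\Pi_{W_\lambda} D)$ exactly when $D\xi$ is orthogonal to $W_\lambda$, i.e. $D\xi \in V_\lambda$. Hence it suffices to prove $V_\lambda \cap \op{im}(D) = \{0\}$, because then any $\xi$ with $D\xi \in V_\lambda$ has $D\xi \in V_\lambda \cap \op{im}(D) = \{0\}$, so $\xi \in \ker D$.

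To prove $V_\lambda \cap \op{im}(D) = \{0\}$, I would take an element $v = \alpha_\lambda(\mathfrak o) = \lambda\,\Pi(\mathfrak o) + (1-\lambda)\,\mathfrak o$ with $\mathfrak o \in \ker D^*$ and assume $v \in \op{im}(D)$. Since $D$ is Fredholm, $\op{im}(D)$ is closed in $L^2(E)$ with $\op{im}(D)^\perp = \ker D^*$, so $\langle v, \mathfrak o\rangle = 0$. Because $\Pi = \Pi_{\widetilde V}$ is an orthogonal projection, $\langle \Pi(\mathfrak o), \mathfrak o\rangle = \|\Pi(\mathfrak o)\|^2$, and therefore $0 = \langle v, \mathfrak o\rangle = \lambda\|\Pi(\mathfrak o)\|^2 + (1-\lambda)\|\mathfrak o\|^2$, a sum of two nonnegative terms. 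For $\lambda \in [0,1)$ this forces $\mathfrak o = 0$, hence $v = 0$; for $\lambda = 1$ it forces $\Pi(\mathfrak o) = 0$, hence $v = \alpha_1(\mathfrak o) = \Pi(\mathfrak o) = 0$. In either case $v = 0$, as desired. This is exactly the computation already used in the proofs of Claim~\ref{claim: projection surjective} and Claim~\ref{claim: lambda surjective}, run in the ``intersection'' direction rather than the ``surjectivity'' direction.

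I do not expect a genuine obstacle. The only point requiring care is the choice of function spaces: $\xi$ should be taken in the domain $L_1^2(E)$ on which $D$ is Fredholm, and one uses that $\mathfrak o \in \ker D^*$ is regular enough (elliptic regularity for the first-order operator $D^*$, or simply that $\ker D^* \subset L_1^2(E)$) so that $D\xi \in L^2(E)$ and the orthogonality $\op{im}(D) \perp \ker D^*$ applies. Once established, the identity $\ker(\Pi_{W_\lambda} D) = \ker D$ holds uniformly for $\lambda \in [0,1]$, which is what is needed to interpolate between $\Pi_{W_0} D$ and $\Pi_{W_1} D = \Pi_W D$ in the subsequent homotopy of Fredholm problems.
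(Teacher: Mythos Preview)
Your proposal is correct and follows essentially the same approach as the paper: both reduce the claim to showing $V_\lambda \cap \op{im}(D) = \{0\}$ (the paper phrases this as injectivity of $\Pi_{W_\lambda}|_{\op{im} D}$, which is the same thing), and both conclude via the identity $0 = \langle \alpha_\lambda(\mathfrak o), \mathfrak o\rangle = \lambda\|\Pi(\mathfrak o)\|^2 + (1-\lambda)\|\mathfrak o\|^2$. Your treatment of the $\lambda = 1$ case is slightly more direct (you observe $v = \Pi(\mathfrak o) = 0$ immediately rather than invoking injectivity of $\Pi$ for a contradiction), but this is a cosmetic difference.
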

    \begin{proof}
        We show that the restricted map $\Pi_{W_\lambda}|_{\op{im} D}: \op{im} D \to W_\lambda$ is injective. Suppose not. Then there exists $0 \neq v \in \op{im} D$ such that $\Pi_{W_\lambda}(v) = 0$. Thus, $v \in W_\lambda^\perp = V_\lambda$. Since $V_\lambda = \alpha_\lambda(\ker D^*)$, there exists $\mathfrak o \in \ker D^*$ such that $v = \alpha_\lambda(\mathfrak o)$. Note that $v \in \op{im} D$ implies $v \perp \ker D^*$. Therefore, 
        \begin{align*}
            0 &= \langle \mathfrak o, v \rangle \\
            & = \langle \mathfrak o, \lambda \Pi(\mathfrak o) + (1-\lambda) \mathfrak o \rangle \\
            & = \lambda \sum_{i=1}^k \langle \mathfrak o, \langle \mathfrak o, e_i \rangle e_i \rangle + (1-\lambda) \langle \mathfrak o, \mathfrak o \rangle \\
            & = \lambda \sum_{i=1}^k \langle \mathfrak o, e_i \rangle^2 + (1-\lambda) \langle \mathfrak o, \mathfrak o \rangle,
        \end{align*}
        where $e_1, e_2, \dots, e_k$ is an orthonormal basis of $\widetilde V$. Since $\mathfrak o \neq 0$, we obtain $\lambda = 1$, implying $\mathfrak o \perp \widetilde V$. Hence, $\mathfrak o \in \ker \Pi$, which contradicts the fact that $\Pi$ is injective.
    \end{proof}
    
    \end{remark}
    
    \begin{remark} \label{remark: surjectivity implies transverse}
        For any finite-dimensional $V$ such that the map $\Pi_W D : L^2(E) \to W$ is surjective, we have $V + \op{im} D = L^2(E)$.
    \end{remark}
    \begin{proof}
        Suppose not. Then there exists $0 \neq \mathfrak o \in L^2(E)$ such that $\mathfrak o$ is orthogonal to $\op{im} D + V$. Hence, $\mathfrak o \in \ker D^*$ and $\mathfrak o \in W = V^\perp$. However, since $\mathfrak o$ is orthogonal to $\op{im} D$, $\mathfrak o \notin \Pi_W D$, contradicting the surjectivity assumption.
    \end{proof}

\section{Proof of Theorem~\ref{thm: gluing}} \label{section: proof of theorem}
In this section, we prove Theorem~\ref{thm: gluing}.
The constructions of the section $\obstructionSectionS$ and the gluing map $G$ follow straightforwardly from Section~\ref{section: pregluing} and Proposition~\ref{prop: contraction mapping}, and the map $j_\sharp$ is obtained by dualizing the map $h$ from the Linear Gluing Lemma~\ref{lemma: linear gluing}. The rest of the proof is a specialization of \cite{hutchings2009gluing} to the Morse trajectory case.
However, we choose to provide an alternative proof using semi-global Kuranishi structures, which is also used later to show iterated gluing = simultaneous gluing.

\begin{proof}
\textbf{Step 1.} For any $\delta > 0$, we first define a bundle $\lOc$ over a small neighborhood $\mathcal W$ of the space of close to breaking curves $\lN = \closeToBreaking{\delta}{\compactSubset_+}{\compactSubset_-}$.
For any $[u_\pm] \in \compactSubset_\pm$, we choose representatives $u_\pm$ as in Section~\ref{subsubsection: choice of representative}. 
There exists a large $R$, such that for any $T > R$, 
we can define $w := \lavg(u_+, u_-, T) := \bPlus{T}\uPlus{-2T} + \bMinus{-T}\uMinus{2T}.$
We construct a subspace 
\begin{equation}\label{eqn: direct sum bundle}
    \lOc_{[w]} = \{ \bPlus{T}\xi_{+, -2T} + \bMinus{-T}\xi_{-,2T} \in \mathcal E_w ~|~ \xi_\pm \in \lO_{u_\pm}\} \subset \mathcal E_{[w]},
\end{equation} where $\xi_{\pm, \mp 2T} = \tau_{\mp 2T}^* \xi_\pm \in (\mathcal E_\pm)_{u_{\pm, \mp 2T}}$ is a translation of $\xi_\pm.$ 
The map $\lavg: \compactSubset_+ \times \compactSubset_- \times [R, \infty) \to \mathcal{B}$ is injective, by the uniqueness of solutions of ordinary differential equations. Then the above construction gives a subbundle $\lOc$ over $\op{im}(\lavg) := \lavg(\compactSubset_+ \times \compactSubset_- \times [R, \infty))$. We extend it to a smaller neighborhood $\mathcal W$ as follows: for any $u \in \lW$ sufficiently close to $\op{im}(\lavg)$, there exists a unique closest element $w = \lavg(u_+, u_-, T)$.
Let $\phi \in L_1^2(w^* TM)$ be defined by $\op{Exp}_{w(t)}\phi(t) = u(t)$.
For any $t \in \R$, define $\lOc_{[u]}(t) \in T_{u(t)} M$ by parallel translating $\lOc_{[w]}$ through the curve $\tau \mapsto \op{Exp}_{w(t)} \tau \phi(t)$ for $\tau \in [0,1].$
If $\delta$ is sufficiently small, then $\lW$ also contains $\mathcal N.$

\textbf{Step 2.} Define a thickened moduli space (Kuranishi neighborhood) $\V$ containing $\lN$ by $\V = \morsedbar^{-1}(\lOc)$.
For any $[u] \in \lN$, let $w \in \op{im}(f)$ be the closest element. By Lemma~\ref{lemma: linear gluing} (the map $h$ being surjective), we have $D_{w} \pitchfork_{\mathcal E} \lOc$ is surjective. Since transversality is an open condition, we obtain $D_u \pitchfork_{\mathcal E} \lOc$ by shrinking $\lW$ if necessary. 

Now we state a claim: 
\begin{claim}[Kuranishi gluing]\label{claim: kuranishi gluing}
    There exists a bundle map $(G^\sharp, G): \obstructionBundlePM \to \lOc$ such that:
    \begin{enumerate}[label=(g\arabic*)]
        \item \label{kg1} $G$ is a diffeomorphism onto its image.
        \item \label{kg2} For sufficiently small $\delta$, the image of $G$ contains $\lN = \closeToBreaking{\delta}{\compactSubset_+}{\compactSubset_-}$.
        \item $G^\sharp$ is an isomorphism on fibers.
    \end{enumerate} 
\end{claim}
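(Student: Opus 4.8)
The plan is to \emph{define} $G$ as the corrected pregluing of Section~\ref{section: pregluing}, to read $G^\sharp$ off the defining formula~\eqref{eqn: direct sum bundle} of $\lOc$, and then to verify \ref{kg1}--(g3) by adapting the analysis of \cite{hutchings2009gluing} to the Morse setting. Concretely: for $(u_+,u_-,T)$ with representatives normalized as in Section~\ref{subsubsection: choice of representative}, let $(\psi_+,\psi_-)$ be the unique small solution of $\Theta_+^\perp=\Theta_-^\perp=0$ provided by Proposition~\ref{prop: contraction mapping}, and put $G(u_+,u_-,T):=u$, the pregluing~\eqref{eqn: pregluing}. The identity $\Phi\morsedbar u=\bPlus{T}\Theta_+^{-2T}+\bMinus{-T}\Theta_-^{2T}$, together with $\Theta_\pm^\perp=0$, shows that $\morsedbar u$ lies (after the parallel transport $\Phi$) in the span of the cut-off, translated cokernel vectors, i.e.\ $\morsedbar u\in\lOc_u$; hence $u\in\V=\morsedbar^{-1}(\lOc)$, and $u\in\morsedbar^{-1}(0)=\M$ precisely when $\obstructionSectionS(u_+,u_-,T)=0$. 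Define $G^\sharp$ on the fibre over $(u_+,u_-,T)$ by $(\xi_+,\xi_-)\mapsto$ (parallel transport to $u$ of) $\bPlus{T}\xi_{+,-2T}+\bMinus{-T}\xi_{-,2T}$; by~\eqref{eqn: direct sum bundle} this lands in $\lOc_u$, so $(G^\sharp,G)$ is a bundle map, and it is $C^1$ because $(\psi_+,\psi_-)$ depends $C^1$ on $(u_+,u_-,T)$ by the implicit function theorem, the relevant linearization being the invertible operator furnished by the contraction estimate.

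\emph{Property~\ref{kg1}.} On tangent spaces $T\big(\M_+\times\M_-\times[R,\infty)\big)=T_{[u_+]}\M_+\oplus T_{[u_-]}\M_-\oplus\R$ (the first two factors being $\ker D_{u_\pm}$ modulo the translation action, since $\M_\pm$ are cleanly cut out), and $dG$ equals the linear pregluing map into $T_u\V=D_u^{-1}(\lOc_u)$ up to an error that decays as $T\to\infty$; the linear gluing estimates underlying Lemma~\ref{lemma: linear gluing} bound it uniformly below, so $dG$ is injective. A dimension count, using $\op{rank}\lOc=\op{rank}\obstructionBundle_++\op{rank}\obstructionBundle_-$ and the Fredholm index identities for $D_u$ and $D_{u_\pm}$, gives $\dim\big(\M_+\times\M_-\times[R,\infty)\big)=\dim\V=\ind p-\ind r-1+\op{rank}\obstructionBundle_++\op{rank}\obstructionBundle_-$. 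Thus $G$ is an injective immersion between manifolds of equal dimension, hence an open embedding, i.e.\ a diffeomorphism onto an open subset of $\V$. Injectivity of $G$ itself is the a priori estimate that a trajectory in the image is $C^1$-close to $q$ on a neck interval whose length recovers $T$ up to a bounded error; this, the normalization of Section~\ref{subsubsection: choice of representative}, and the uniqueness in Proposition~\ref{prop: contraction mapping} pin $(u_+,u_-,T)$ down uniquely.

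\emph{Properties~\ref{kg2} and (g3).} For~\ref{kg2}: given $[u]\in\lN$, if $\delta$ is small the close-to-breaking condition (Definition~\ref{defn:close_to_breaking}) forces a long neck, and restricting $u$ to its two ends and re-normalizing yields $u_\pm$ and $T$ for which $[u]$ is $C^1$-close to the pregluing of $(u_+,u_-,T)$, so $u$ is that pregluing corrected by some $(\psi_+,\psi_-)$; since $\morsedbar u=0$, these solve $\Theta_+=\Theta_-=0$, in particular $\Theta_\pm^\perp=0$ and $\obstructionSectionS(u_+,u_-,T)=0$, so by uniqueness in Proposition~\ref{prop: contraction mapping} they are exactly the data defining $G$, whence $[u]=G(u_+,u_-,T)$. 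For~(g3): on the fibre over $(u_+,u_-,T)$, $G^\sharp$ is the map $\ker D_{u_+}^*\oplus\ker D_{u_-}^*\to\lOc_u$, $(\xi_+,\xi_-)\mapsto\bPlus{T}\xi_{+,-2T}+\bMinus{-T}\xi_{-,2T}$ transported to $u$; it is surjective by the definition~\eqref{eqn: direct sum bundle} of $\lOc$, and injective for $R$ large because the two cut-offs overlap only on a bounded interval on which the $\xi_\pm$ are exponentially small, so a vanishing combination forces $\xi_+=\xi_-=0$. Being injective between spaces of equal dimension $\op{rank}\obstructionBundle_++\op{rank}\obstructionBundle_-$, it is an isomorphism.

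\emph{Conclusion and main obstacle.} Restricting $(G^\sharp,G)$ to $\gluableSet=\obstructionSectionS^{-1}(0)$ then yields the gluing map and bundle map of Theorem~\ref{thm: gluing}, and dualizing the map $h$ of Lemma~\ref{lemma: linear gluing} (as in Remark~\ref{remark: linear gluing}) supplies the injective $j^\sharp$ with $i^\sharp\circ j^\sharp$ transverse to $\obstructionSectionS$. The genuinely technical steps are the injectivity of $G$ and the extraction in~\ref{kg2} --- that every $\delta$-close-to-breaking trajectory decomposes, and uniquely; both rely on exponential decay near $q$ and on the contraction-mapping uniqueness, which is exactly the hard analysis of \cite{hutchings2009gluing} and transfers verbatim (and more easily) to the Morse trajectory case.
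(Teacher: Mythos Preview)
Your outline is correct and close in spirit to the paper's, but you take a genuinely different route to $G$. You reuse Proposition~\ref{prop: contraction mapping} verbatim, solving $\Theta_\pm^\perp=0$ with the \emph{fixed} projections onto $\ker D_{u_\pm}^*$, and then argue that the resulting $u$ lies in $\V=\morsedbar^{-1}(\lOc)$. The paper instead solves the Kuranishi equation $\morsedbar u\in\lOc_{[u]}$ directly: it splits this as $\Theta_\pm\in\lOc'_\pm$, where the targets $\lOc'_\pm$ are $\psi$-dependent subspaces (close to but not equal to $\ker D_{u_\pm}^*$), and runs a separate contraction-mapping argument. The paper's choice guarantees $u\in\V$ by construction and makes the later identity $\obstructionSectionS=(G^\sharp)^{-1}\circ\morsedbar\circ G$ immediate; your choice avoids a second fixed-point argument but leaves a small gap.

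The gap is in the step ``$\Phi\morsedbar u\in\lOc_{[w]}$, hence $\morsedbar u\in\lOc_{[u]}$''. In Step~1 of the proof, the fibre $\lOc_{[u]}$ is defined by parallel transport of $\lOc_{[w']}$ from the \emph{closest} element $w'\in\op{im}(\lavg)$, along the geodesic path $\tau\mapsto\op{Exp}_{w'}(\tau\phi)$; a priori $w'\neq w=\lavg(u_+,u_-,T)$, and even when $w'=w$ the defining path differs from the path $u_\tau$ underlying your $\Phi$. So $\Phi^{-1}(\lOc_{[w]})$ need not equal $\lOc_{[u]}$, and your $u$ need not lie in $\V$. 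The same issue affects your $G^\sharp$: the cut-off sum you write lands in $\Phi^{-1}(\lOc_{[w]})$, not in $\lOc_{[u]}$; this is exactly why the paper appends a final orthogonal projection to $\lOc$ in its definition of $G^\sharp$. The fix is easy---either redefine $\lOc$ on $\op{im}(G)$ as $\Phi^{-1}(\lOc_{[w]})$ (this differs from the Step~1 bundle only by $O(\|\psi\|)$, so $\morsedbar$ remains transverse to it), or adopt the paper's projection---but it should be stated. Your treatment of \ref{kg1} (dimension count, injectivity via neck-length recovery and contraction-mapping uniqueness) and \ref{kg2} (surjectivity onto $\lN$) is correct and matches the paper.
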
 
Assuming the claim, we proceed with the proof. We restrict the bundle $\lOc$ to $\V$ and still call it $\lOc$. This gives us a commutative diagram:
\begin{equation}\label{fig: thicken gluing diagram}
\begin{tikzcd}
\obstructionBundlePM \arrow[d] \arrow[r, "G^\sharp"] & \lOc \arrow[d]  \\
\openSubset_+ \times \openSubset_- \times [R,\infty) \arrow[u, bend left, "\obstructionSectionS"] \arrow[r, "G"] & \V \arrow[u, swap, bend right, "\morsedbar"]
\end{tikzcd}.
\end{equation}

\textbf{Step 3.} We define $\obstructionSectionS = (G^\sharp)^{-1} \circ \morsedbar \circ G$.  
Since, by definition, $G^\sharp \circ \obstructionSectionS = \morsedbar \circ G$, we have $G (\obstructionSectionS^{-1}(0)) \subset \morsedbar^{-1}(0) \subset \M$.
The gluing map $\obstructionSectionS^{-1}(0) \to \M$ in the statement of the theorem is simply the restriction of $G$ to $\obstructionSectionS^{-1}(0)$.
Let $\op{pr}: \obstructionBundle \to \lOc$ be the orthogonal projection, 
which is injective by Remark~\ref{remark: linear gluing}.
% which is injective by the definition of $\lOc$.
% Indeed, suppose that $\mathfrak{k} := \ker \op{pr} \neq \{0\}$.
% Then $\mathfrak{k} \perp \lOc$ and $\mathfrak{k} \perp \op{im} D$,
% and hence $\mathfrak{k} \notin (\op{im} D + \lOc).$
% This contradicts the fact that $\morsedbar \pitchfork \lOc$. 

Define the map $j^\sharp = (G^\sharp)^{-1} \circ \op{pr}$.
Note that $\morsedbar \pitchfork \op{pr}(\lO)$ by Claim~\ref{claim: lambda surjective} and Remark~\ref{remark: surjectivity implies transverse}.
% This is because, if not, there exists a vector $0 \neq \mathfrak{v} \in \mathcal{E}$ such that $\mathfrak{v} \perp (\op{im}(\op{pr}) + \op{im}(D))$. Then $\mathfrak{v} \in \lO$, and hence $\mathfrak{v} \in \ker \op{pr}$, contradicting the fact that $\op{pr}: \lO \to \lOc$ is injective. 
Hence, we have $i^\sharp \circ j^\sharp$ transverse to $\obstructionSectionS$.
The statement that $\obstructionSectionS$ intersects the zero section cleanly is equivalent to $\morsedbar^{-1}(0) = \morsedbar^{-1}(\obstructionBundle)$ by Lemma~\ref{lemma:clean-intersection-criterion}, which holds by the definition of $\obstructionBundle$ and the assumption that $\M$ is cleanly cut out.

\begin{proof}[Proof of Claim]
This is the standard gluing theorem for Kuranishi structures. See, for instance, Section 7 of \cite{bao2015semi}. We outline the proof below.
Let $\lH_\pm = (\ker D_{u_\pm})^\perp$, and $\psi_\pm \in \lH_\pm$.
To construct the gluing map $G$, we first define a pregluing $u$ as in Equation~\eqref{eqn: pregluing}. 
Then $[u] \in \V$ satisfies the equation $\morsedbar u \in \lOc_{[u]}$, or more precisely, $\Phi \morsedbar u \in \Phi \lOc_{[u]}$
which becomes 
\begin{equation} \label{eqn: kuranishi equation for Theta 2T}
    \bPlus{T}\Theta_+^{-2T} + \bMinus{-T}\Theta_-^{2T} \in \Phi \lOc_{[u]},
\end{equation} 
where $\Phi: u^*TM \to w^*TM$ is the identification defined in Section~\ref{section: pregluing}, $w = \bPlus{T}\uPlus{-2T} + \bMinus{-T}\uMinus{2T}$, and $\Theta_\pm^{\mp 2T}$ are defined in Equations~\eqref{eqn: Theta plus 2T} and \eqref{eqn: Theta minus 2T}.
We define two vector spaces $\lOc^+_{[u]} = \{ \frac{\bPlus{T}}{\bPlus{T} + \bMinus{T}} \eta ~|~ \eta \in \lOc_{[u]}\}$
and $\lOc^-_{[u]} = \{ \frac{\bMinus{T}}{\bPlus{T} + \bMinus{T}} \eta ~|~ \eta \in \lOc_{[u]}\}$.
The space $\lOc^+_{[u]}$ can be viewed as a subspace of $\mathcal{E}_{[\uPlus{-2T} + \pPlus{-2T}]}$, and 
the space $\lOc^-_{[u]}$ can be viewed as a subspace of $\mathcal{E}_{[\uMinus{2T} + \pMinus{2T}]}$. 

To solve Equations~\eqref{eqn: kuranishi equation for Theta 2T}, it is sufficient to solve the system 
\[
\Theta_\pm^{\mp 2T}(\psi_+, \psi_-) \in \Phi_\pm \lOc^\pm_{[u]},
\]
where $\Phi_\pm: (\op{Exp}_{u_{\pm, \mp 2T}} \psi_{\pm, \mp 2T})^* TM \to u_{\pm, \mp 2T}^* TM$ is the identification defined in Section~\ref{section: pregluing}.
Applying translation $\trans{\mp 2T}$ and denoting $\lOc_\pm':= \trans{\mp 2T} \Phi_\pm \lOc^\pm_{[u]} \subset \mathcal{E}_{u_\pm}$, we get 
\[
\Theta_\pm (\psi_+, \psi_-) \in \lOc_\pm',
\]
 i.e.,
\[
    D_{u_\pm} \psi_\pm + \frac{d}{ds}(\bump{\mp}{\pm T} (u_{\mp,4T} + \psi_{\mp,4T}))   \in  \lOc_\pm'.
\]
Let $\Pi_\pm: \mathcal{E}_{u_\pm} \to (\lOc_\pm')^\perp$ be the $L^2$-orthogonal projection. Then the equations become 
\begin{equation}\label{eqn: kuranishi gluing projected}
    \Pi_\pm (D_{u_\pm} \psi_\pm + \frac{d}{ds}(\bump{\mp}{\pm T} (u_{\mp,4T} + \psi_{\mp,4T}))) = 0.
\end{equation}
But since $\lOc_\pm'$ is close to $\lO_\pm$ when $T$ is large and $D_{u_\pm}$ is transverse to $\lO_\pm$, it follows that $\Pi_\pm D_{u_\pm}$ is surjective.

Similar to the proof of Proposition~\ref{prop: contraction mapping}, the contraction mapping theorem implies that for each $(u_+, u_-, T) \in U_+ \times U_- \times [R, \infty)$, let $\lH_\pm = (\ker \Pi_\pm D_{u_\pm})^\perp$.
Then there exists a unique solution $(\Perturb_+, \Perturb_-) \in \lH_+ \times \lH_-$ that solves Equations~\eqref{eqn: kuranishi gluing projected}, and hence we obtain the gluing map $G$ mapping $(u_+, u_-, T)$ to $u$ as in Claim~\ref{claim: kuranishi gluing}. Statements~\ref{kg1} and \ref{kg2} then follow from the standard gluing result for Kuranishi structures.
The map $G^\sharp$ is defined as follows: for any $\eta_+ \in {\obstructionBundle_+}_{[u_+]} \subset \mathcal{E}_{[u_+]}$, 
we first translate the domain and obtain $\eta_{+,-2T} := \trans{-2T}^* \eta_+ \in \mathcal{E}_{[\uPlus{-2T}]}$,
and then we can parallel transport $\eta_{+,-2T}$ to $\eta_{+,-2T}' \in \mathcal{E}_{[\uPlus{-2T} + \pPlus{-2T}]}$ via the path $\tau \mapsto \op{exp}_{\uPlus{-2T}}(\tau \pPlus{-2T})$ for $\tau \in [0,1]$. 
Similarly, for any $\eta_- \in {\obstructionBundle_-}_{[u_-]}$, we get $\eta_{-,2T}'$. Then we define the image of $(\eta_+, \eta_-)$ to be $\bPlus{T}\eta_{+,-2T}' + \bMinus{-T}\eta_{-,2T}'$ projected orthogonally to $\lOc$.
By construction, it is clear that $G^\sharp$ is an isomorphism. This completes the proof of the claim.
\end{proof}
\end{proof}

\begin{remark} 
From the proof, it is evident that the theorem remains valid even if the bundle $\obstructionBundle_\pm$ is replaced by other bundles whose fibers are given by $V_\pm$, as in linear gluing Lemma~\ref{lemma: linear gluing}, provided that they satisfy the surjectivity assumption stated therein. In particular, by Claim~\ref{claim: lambda surjective}, we can select the fibers to be $V_\lambda$ near the boundary of the moduli space.
\end{remark}

\section{Iterated Gluing vs. Simultaneous Gluing}
In this section, we show that without adjustments difference between iterated gluiing and simultaneous gluing goes to zero as the gluing parameters goes to zero. In Section~\ref{section: construction of semi-global} we make adjustments so that the difference is zero as stated in \ref{k2}.
We order all moduli spaces $$\{\modF{p}{q}~|~ p,q \in \crit{f}, f(p) > f(q)\}$$ as $\M_1, \M_2, \dots$ such that the energy $f(p) - f(q)$ of $\modF{p}{q}$ increases.

We define the source of the moduli space $\modF{p}{q}$ as $p$ and denote it by $\source(\modF{p}{q}) = p$, and the target of the moduli space $\modF{p}{q}$ as $q$, denoted by $\target(\modF{p}{q}) = q$.

We say $\iT{I} = \indexTuple{I}$ is an index tuple of length $n(\iT{I})$ if $i_m \in \Z_{>0}$ for $m \in \{1, 2, \dots, n(\iT{I})\}$ and $\target(\M_{i_m}) = \source(\M_{i_{m+1}})$ for $m \in \{1, 2, \dots, n(\iT{I})-1\}$. The contraction $\contraction{\iT{I}}{\iT{I}}$ of $\iT{I}$ is defined as the integer $\ell$ such that $\source(\M_{\ell}) = \source(\M_{I_1})$ and $\target(\M_{\ell}) = \target(\M_{I_i})$.

More generally, a \emph{sub-index tuple} $\iT{S}$ of $\iT{I}$ is a sub-tuple of $\iT{I}$ that is also an index tuple. We can contract $\iT{I}$ along a sub-index tuple $\iT{S}$ by replacing the sub-tuple $\iT{S}$ with the integer $\contraction{S}{S}$ and denote the resulting index tuple by $\contraction{\iT{I}}{\iT{S}}$.

We say $\SIC{S}$ is a sub-index collection of $\iT{I}$, written as $\SIC{S} \subseteq \iT{I}$, if $\SIC{S} = (\iT{S}^1, \dots, \iT{S}^j)$ for some $j$ such that $\iT{S}^1, \dots, \iT{S}^j$ are non-overlapping sub-index tuples of $\iT{I}$. We can also contract along a \emph{sub-index collection} in the obvious way. We denote the contraction of $\iT{I}$ along $\SIC{S}$ by $\contraction{\iT{I}}{\SIC{S}}$. We say index tuple $\iT{J} \leq \iT{I}$ if $\iT{J} = \contraction{\iT{I}}{\SIC{S}}$ for some sub-index collection $\SIC{S}$ of $\iT{I}$.

Let $\iT{I} = (i_1, i_2, \dots, i_{\tt n})$ be an index tuple with $\ell = \contraction{I}{I}$. Let $q_0, q_1, \dots, q_{\tt n}$ be distinct critical points of $f$ such that $\target(\M_{i_k}) = q_k$ and $\source(\M_{i_{k+1}}) = q_k$ for $k \in \{1, 2, \dots, \tt n\}$. Let $\compactSubset_{i_k} \subset \M_{i_k}$ be compact subspaces for $k \in \{1, 2, \dots, \tt n\}$. Let $\delta > 0$ be a small constant.

\begin{definition}[Close to breaking (multiple buildings)]\label{defn:close_to_breaking}
A map $[u] \in \M_\ell$ is \emph{$\delta$-close to breaking into the broken trajectory} $([u_{i_1}], [u_{i_2}], \dots, [u_{i_{\tt n}}]) \in \compactSubset_{i_1} \times \compactSubset_{i_2} \times \dots \times \compactSubset_{i_{\tt n}}$ if there exist representatives $u, u_{i_1}, u_{i_2}, \dots, u_{i_{\tt n}}$ of $[u], [u_{i_1}], [u_{i_2}], \dots, [u_{i_{\tt n}}]$ respectively, and constants $a_1 < b_1 < \dots < a_{\tt n} < b_{\tt n}$ such that for each $k \in \{1, 2, \dots, \tt n\}$, the following hold:
\begin{enumerate}[label=(\alph*)]
    \item $u|_{[a_k, b_k]}$ and $u_k|_{[a_k, b_k]}$ are $\delta$-close in $C^1$-norm.
    \item $u|_{(b_k, a_{k+1})}$ is $\delta$-close in $C^1$-norm to the constant map $q_k$, where $b_0 := -\infty$ and $a_{\tt n} := \infty$;
    \item $u_k|_{(-\infty, a_k)}$ is $\delta$-close in $C^1$-norm to the constant map $q_{k-1}$.
    \item $u_k|_{(b_k, \infty)}$ is $\delta$-close in $C^1$-norm to the constant map $q_k$.
\end{enumerate}
Denote $\compactSubset_{\iT{I}} = \compactSubset_{i_1} \times \compactSubset_{i_2} \times \dots \times \compactSubset_{i_{\tt n}}$, and the space of such $\delta$-close to breaking maps by $\closeToBreakings{\delta}{I}$.
\end{definition}

Let $R > 0$ be a real number, which we call the gluing parameter bound. Let $\openSubset_{i_k}$ be an open neighborhood of $\compactSubset_{i_k}$, i.e., $\compactSubset_{i_k} \subset \openSubset_{i_k} \subset \M_{i_k}$ for $k \in \{1, 2, \dots, \tt n\}$. We denote by $\productModuliSpaces{I}{R}$ the product $\openSubset_{i_1} \times_R \openSubset_{i_2} \times_R \dots \times_R \openSubset_{i_{\tt n}}$, where $A \times_R B$ is the abbreviation of $A \times [R, \infty) \times B$. We also denote by $\mathbb K_I^R$ the product $\compactSubset_{i_1} \times_R \compactSubset_{i_2} \times_R \dots \times_R \compactSubset_{i_{\tt n}}$. Let $\obstructionBundle_{\ell} \to \M_{\ell}$ be the obstruction bundle, i.e., the fiber is $\ker D_u^*$. We write $\obstructionBundle_{\ell} \to \closeToBreakings{\delta}{I}$ for the bundle $\obstructionBundle_{\ell}|_{\closeToBreakings{\delta}{I}} \to \closeToBreakings{\delta}{I}$. We denote by $\productBundle{I}$ the obstruction bundle over the product $\productModuliSpaces{I}{R}$ defined by $\productBundle{I} = \oplus_{j = 1}^{i} \op{pr}_k^* \obstructionBundle_{i_k}$, where $\op{pr}_k: \productModuliSpaces{I}{R} \to \openSubset_{i_k}$ is the projection map.

\begin{theorem}[Simultaneous gluing]\label{thm: simultaneous gluing}
    Let $\iT{I} = (i_1,\dots, i_{\tt n})$ be an index tuple of length $\tt n$ such that $\contraction{\iT{I}}{\iT{I}} = \ell$.
    Suppose that $\M_{i_1},\dots, \M_{i_{\tt n}}$, and $\M_{\ell}$ are cleanly cut out.
    For any compact subsets $\compactSubset_{i_k} \subset \M_{i_k}$ and their sufficiently small open neighborhoods $\compactSubset_{i_k} \subset \openSubset_{i_k} \subset \M_{i_k}$ for $k \in \{1, 2, \dots, \tt n\}$,
    there exist a gluing parameter bound $R > 0$, a close-to-breaking parameter bound $\delta > 0$, and a $C^1$-section $\obstructionSection{I}$, called the obstruction section, of the bundle $\productBundle{I} \to \productModuliSpaces{I}{R}$,
    such that the following holds:
    \begin{enumerate}[label = (G\arabic*)]
        \item The obstruction section $\obstructionSectionS$ is a $C^1$-map that intersects the zero section cleanly. We denote $\gluableSet = \obstructionSection{I}^{-1}(0)$. Let $i: \gluableSet \to \productModuliSpaces{I}{R}$ be the inclusion map.
        \item There exists a $C^1$-gluing map $G_{\iT{I}}: \gluableSet \to \M_\ell$ such that:
            \begin{enumerate}[label=(\roman*)]
                \item The map $G_{\iT{I}}$ is a diffoemorphism onto its image.
                \item For any $R' \geq R$, there exists $\delta' > 0$ such that $\closeToBreakings{\delta'}{I} \subset G_{\iT{I}} \circ i^{-1}(\productModuliSpaces{I}{R'}).$
                \item For any $0 < \delta' < \delta$, there exists $R'$ such that $G_{\iT{I}} \circ i^{-1}(\mathbb K_I^{R'} ) \subset \closeToBreakings{\delta'}{I}.$
            \end{enumerate}
        \item There exists a bundle map $j^\sharp: G_{\iT{I}}^* \obstructionBundle_\ell \to i^* \productBundle{I}$ such that:
            \begin{enumerate}[label=(\roman*)]
                \item $j^\sharp$ is injective.
                \item $i^\sharp \circ j^\sharp$ is transverse to $\productBundle{I}$, where $i^\sharp$ is the induced bundle map $i^* \productBundle{I} \to \productBundle{I}$. See the following diagram:
            \end{enumerate}
            \begin{equation}\label{fig: simul gluing diagram}
            \begin{tikzcd}
                \productBundle{I} \arrow[d] & i^* \productBundle{I} \arrow{dr} \arrow[l, "i^\sharp"] & & G^* \obstructionBundle_\ell \arrow[ll, "j^\sharp"] \arrow{dl} \arrow[r, "G^\sharp"] & \obstructionBundle_\ell \arrow[d] \\
                \productModuliSpaces{I}{R} \arrow[u, bend left, "\obstructionSection{I}"] & & \gluableSet \arrow[ll, "i"] \arrow[rr, "G"] & & \M_\ell \\
            \end{tikzcd}.
            \end{equation}
        \item For any $m\in\{1,\dots, n(\iT{I})-1\}$, let $T_{m} \in [R,\infty)$ be the $m$-th factor of $[R,\infty)$ in $\productModuliSpaces{I}{R}$. Then $\obstructionSection{I}\to (\obstructionSection{I_1}, \obstructionSection{I_2})$ in $C^1$-norm as $T_m \to \infty$, where $\iT{I} = \indexTuple{I}$, $\iT{I_1} = (i_1,\dots, i_m)$ and $\iT{I_2} = (i_{m+1},\dots, i_{n(\iT{I})}).$ Here we set the notation $\obstructionSection{I'} = 0$, if $n(\iT{I'}) = 1.$    
    \end{enumerate}
\end{theorem}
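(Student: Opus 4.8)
The plan is to run the argument of the two-piece Gluing Theorem~\ref{thm: gluing} with the two-step pregluing of Section~\ref{section: pregluing} replaced by an $\tt n$-step pregluing, and then to extract the new statement (G4) by analyzing how this pregluing degenerates when a single gluing parameter is sent to infinity. First I would set up the $\tt n$-fold pregluing: given representatives $u_{i_k}$ of elements of $\compactSubset_{i_k}$ normalized as in Section~\ref{subsubsection: choice of representative} and gluing parameters $T_1,\dots,T_{{\tt n}-1}\ge R$, define
\[
w=\lavg(u_{i_1},\dots,u_{i_{\tt n}},T_1,\dots,T_{{\tt n}-1})
\]
by interpolating near each neck $q_k$ between appropriately translated pieces with cutoff functions as in Equation~\eqref{eqn: pregluing}; the resulting map $\lavg$ is injective by uniqueness of solutions of ODEs. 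Over a small neighborhood $\lW$ of the close-to-breaking locus $\closeToBreakings{\delta}{I}$ I would build the finite-rank subbundle $\lOc\subset\mathcal E$ whose fiber over $w$ is the span of the cutoff translates of elements of the $\obstructionBundle_{i_k}|_{u_{i_k}}=\ker D_{u_{i_k}}^*$, transported to nearby curves by parallel transport, exactly as in Equation~\eqref{eqn: direct sum bundle}, and set $\V=\morsedbar^{-1}(\lOc)$.

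The linear input is obtained by iterating the Linear Gluing Lemma~\ref{lemma: linear gluing} once per neck: gluing the pieces one neck at a time and taking, at each stage, the finite-dimensional subspaces to be the orthogonal complements of the images $\op{im}D_{u_{i_k}}$, so that the intermediate $W$-projected operators stay surjective (this is Claim~\ref{claim: lambda surjective} and Remark~\ref{remark: linear gluing} applied repeatedly, and is where one must check the cutoff functions are chosen compatibly with the recursion), one obtains an exact sequence relating $\ker D_w$, $\coker D_w$ and $\bigoplus_k\ker D_{u_{i_k}}$ whose cokernel map $h$ sends $(\xi_{i_1},\dots,\xi_{i_{\tt n}})\in\bigoplus_k\ker D_{u_{i_k}}^*$ to the class of a cutoff sum $\sum_k\beta_k\xi_{i_k}$. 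Surjectivity of $h$ gives $D_w\pitchfork_{\mathcal E}\lOc$, hence $D_u\pitchfork_{\mathcal E}\lOc$ after shrinking $\lW$, so $\V$ is a smooth manifold of the expected dimension; then, exactly as in Claim~\ref{claim: kuranishi gluing}, a contraction-mapping argument modeled on Proposition~\ref{prop: contraction mapping}, with the projected equations~\eqref{eqn: kuranishi gluing projected} now indexed by $k=1,\dots,{\tt n}$, produces $G_{\iT{I}}$ and the fibrewise isomorphism $G^\sharp$. Setting $\obstructionSection{I}=(G^\sharp)^{-1}\circ\morsedbar\circ G_{\iT{I}}$ and $j^\sharp=(G^\sharp)^{-1}\circ\op{pr}$ with $\op{pr}$ the orthogonal projection $\obstructionBundle_\ell\to\lOc$ (injective by Remark~\ref{remark: linear gluing}), items (G1)--(G3) follow verbatim from the two-piece case: (G1) uses Lemma~\ref{lemma:clean-intersection-criterion} together with the hypothesis that $\M_\ell$ is cleanly cut out, and the transversality in (G3) comes from $\morsedbar\pitchfork\op{pr}(\lO)$ via Claim~\ref{claim: lambda surjective} and Remark~\ref{remark: surjectivity implies transverse}, the $T\to\infty$ decay being deferred to (G4).

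It remains to prove (G4), the genuinely new point. Fix $m\in\{1,\dots,{\tt n}-1\}$ and write $\iT{I_1}=(i_1,\dots,i_m)$, $\iT{I_2}=(i_{m+1},\dots,i_{\tt n})$. As $T_m\to\infty$ with the remaining parameters fixed, the $m$-th neck of $w$ becomes an arbitrarily long almost-constant cylinder near $q_m$, so the $\tt n$-fold pregluing factors, up to an error exponentially small in $T_m$ in $C^1$, as the two-piece pregluing at parameter $T_m$ of $w_1:=\lavg(u_{i_1},\dots,u_{i_m},T_1,\dots,T_{m-1})$ with $w_2:=\lavg(u_{i_{m+1}},\dots,u_{i_{\tt n}},T_{m+1},\dots,T_{{\tt n}-1})$; correspondingly $\lOc$ converges to $\lOc_{\iT{I_1}}\oplus\lOc_{\iT{I_2}}$, and the gluing equation across the $m$-th neck decouples from the pieces and necks on either side. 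Feeding this into the contraction mapping, whose estimates (as in Proposition 4.6 of \cite{hutchings2009gluing} and Lemma 8.4.4 of \cite{bao2018definition}) are uniform over the compact sets and over all gluing parameters bounded below by $R$, the unique solution $(\psi_{i_1},\dots,\psi_{i_{\tt n}})$ converges to the concatenation of the solutions defining $G_{\iT{I_1}}$ and $G_{\iT{I_2}}$, so $\morsedbar\circ G_{\iT{I}}$ and $G^\sharp$ converge to the corresponding data for $\iT{I_1}$ and $\iT{I_2}$. Hence $\obstructionSection{I}=(G^\sharp)^{-1}\circ\morsedbar\circ G_{\iT{I}}\to(\obstructionSection{I_1},\obstructionSection{I_2})$ in $C^1$ as $T_m\to\infty$, with the convention $\obstructionSection{I'}=0$ when $n(\iT{I'})=1$; taking ${\tt n}=2$ here recovers the decay $\obstructionSectionS\to0$ as $T\to\infty$ recorded in (G1) of Theorem~\ref{thm: gluing}.

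I expect the main obstacle to be the bookkeeping behind the uniformity in (G4): one must ensure that the exponential decay in $T_m$ of the splitting error of the pregluing, of the difference $\lOc-(\lOc_{\iT{I_1}}\oplus\lOc_{\iT{I_2}})$, and of the correction to the contraction-mapping fixed point is uniform not only over $\compactSubset_{i_1}\times\cdots\times\compactSubset_{i_{\tt n}}$ but also over the remaining gluing parameters $T_k\in[R,\infty)$ for $k\neq m$ — with ${\tt n}-1$ necks present, the error contributed at each neck must be controlled independently of the lengths of the others. These are exactly the multi-neck estimates already present in \cite{hutchings2009gluing}, so no new analytic ingredient is needed; the real work is to arrange the $\tt n$-fold pregluing, $\lOc$, $G_{\iT{I}}$ and $h$ so that they are compatible, up to exponentially small error in $T_m$, with the corresponding data for $\iT{I_1}$ and $\iT{I_2}$ obtained by splitting at the $m$-th neck.
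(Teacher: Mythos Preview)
Your proposal is correct and follows the same approach as the paper: the paper's own proof is only a brief sketch that says the argument is a straightforward generalization of the two-piece Theorem~\ref{thm: gluing} (via Kuranishi gluing, referring to Theorem~6.5.1 of \cite{bao2015semi}), and defines $\obstructionSection{I} = (G_I^\sharp)^{-1}\circ\morsedbar\circ G_I$ exactly as you do. You have in fact supplied more detail than the paper, particularly for (G4), but the strategy and key constructions are identical.
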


Similar as Theorem~\ref{thm: gluing}, there are two ways to prove Theorem~\ref{thm: simultaneous gluing}: a straightforward obstruction bundle gluing, or a proof using Kuranishi gluing. Both methods are straightforward generalizations of the proofs of Theorem~\ref{thm: gluing}.

\begin{proof}[Sketch of proof]
The proof is similar to that of Theorem~\ref{thm: gluing}. For details on the Kuranishi case in the context of pseudo-holomorphic curves, see the proof of Theorem 6.5.1 in \cite{bao2015semi}. Following this, define $\obstructionSection{I} = (G_I^\sharp)^{-1} \circ \morsedbar \circ G_I$ as in Step 2 of the proof of Theorem~\ref{thm: gluing}.
\end{proof}

Let $\breakingBase{I} = i \circ G^{-1}: \closeToBreakings{\delta}{I} \to \productModuliSpaces{I}{R}$, and let $B^\sharp = i^\sharp \circ j^\sharp \circ (G^\sharp)^{-1}: \productBundle{I}|_{\closeToBreakings{\delta}{I}} \to \productBundle{I}$.
We rephrase the theorem as follows.

\begin{corollary} 
Under the same assumptions as Theorem~\ref{thm: simultaneous gluing},
there exist a gluing parameter bound $R > 0$, a constant $\delta > 0$,
and a $C^1$-bundle map $(\breakingBundle{\iT{I}}, \breakingBase{\iT{I}})$: 
\begin{equation}\label{fig: simplified gluing diagram}
\begin{tikzcd}
\productBundle{I} \arrow[d] & \obstructionBundle_{\ell} \arrow[d] \arrow[l, "\breakingBase{\iT{I}}^\sharp"] \\
\productModuliSpaces{I}{R} \arrow[u, bend left, "\obstructionSection{I}"] & \closeToBreakings{\delta}{I} \arrow[l, "\breakingBase{\iT{I}}"]\\
\end{tikzcd}
\end{equation}
such that 
\begin{enumerate}
    \item $\breakingBase{\iT{I}}$ (the inverse of the gluing map) is diffeomorphic onto its image,  
    \item $\obstructionSection{I} \circ \breakingBase{\iT{I}} = 0$,
    \item for any $R' > R$, there exists $\delta'> 0$ such that $\breakingBase{I}(\closeToBreakings{\delta'}{I}) \subset \productModuliSpaces{I}{R'}$,
    \item for any $0 < \delta' < \delta$, there exists $R'$ such that $\mathbb K_I^{R'} \subset \breakingBase{I}(\closeToBreakings{\delta'}{I})$,
    % \item for any $T > R$, there exists $\delta' < \delta$ such that $\mathbb K_I^{R'} \subset \breakingBase{I}(\closeToBreakings{\delta'}{I})$,
    % \item there exists $R' > R$ such that $$\obstructionSection{I}^{-1}(0) \cap \productModuliSpaces{I}{R'}$$ is in the image of $\breakingBase{\iT{I}}$, and 
    \item $\breakingBundle{\iT{I}}$ is injective,
    \item $\obstructionSection{I}$ is transverse to $\breakingBundle{I}$,
    \item for any $m\in\{1,\dots, n(\iT{I})-1\}$, let $T_{m} \in [R,\infty)$ be the $m$-th factor of $[R,\infty)$ in $\productModuliSpaces{I}{R}$. Then $\obstructionSection{I}\to (\obstructionSection{I_1}, \obstructionSection{I_2})$ in $C^1$-norm as $T_m \to \infty$, where $\iT{I} = \indexTuple{I}$, $\iT{I_1} = (i_1,\dots, i_m)$ and $\iT{I_2} = (i_{m+1},\dots, i_{n(\iT{I})}).$ 
    % for any $v\in \chart{I}{\tau}$, let $w = \breakingBase{\iT{I}}(v)$. Then
    % \begin{equation}\label{matching condition}
    %     \op{im}(D_{w} \obstructionSection{I}) \oplus \breakingBundle{\iT{I}}(\obstructionBundle_{\ell}|_v) = \productBundle{I}|_{w}.
    % \end{equation} 
\end{enumerate}
\end{corollary}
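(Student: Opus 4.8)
The plan is to obtain this corollary as a pure repackaging of Theorem~\ref{thm: simultaneous gluing}, with no new analysis. I would keep the gluing parameter bound $R$, the close-to-breaking bound $\delta$, the obstruction section $\obstructionSection{I}$, the gluable locus $\gluableSet = \obstructionSection{I}^{-1}(0)$ with its inclusion $i$, the gluing map $G_{\iT{I}}$, and the bundle maps $j^\sharp$ and $G^\sharp$ exactly as produced there. First I would shrink $\delta$ if necessary so that the breaking map $\breakingBase{\iT{I}} := i \circ G_{\iT{I}}^{-1}$ is defined on all of $\closeToBreakings{\delta}{I}$: applying property (G2)(ii) with $R' = R$ gives some $\delta_0 > 0$ with $\closeToBreakings{\delta_0}{I} \subseteq G_{\iT{I}} \circ i^{-1}(\productModuliSpaces{I}{R}) = \op{im}(G_{\iT{I}})$, and replacing $\delta$ by $\min(\delta, \delta_0)$ puts $\closeToBreakings{\delta}{I}$ inside the image of the diffeomorphism $G_{\iT{I}}$. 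Then $\breakingBase{\iT{I}}$ and $\breakingBundle{\iT{I}} := i^\sharp \circ j^\sharp \circ (G^\sharp)^{-1}$ are well defined, $C^1$, and fit into the displayed square by construction.

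Next I would read off the seven items by chasing that square. Item (1): $i$ is an embedding and $G_{\iT{I}}$ is a diffeomorphism onto its image by (G2)(i), so $\breakingBase{\iT{I}}$ is as well. Item (2): the image of $\breakingBase{\iT{I}}$ lies in $i(\gluableSet) = \obstructionSection{I}^{-1}(0)$, so $\obstructionSection{I} \circ \breakingBase{\iT{I}} \equiv 0$. Items (3) and (4) are the images under $\breakingBase{\iT{I}} = i \circ G_{\iT{I}}^{-1}$ of the two inclusions in (G2)(ii) and (G2)(iii): from $\closeToBreakings{\delta'}{I} \subseteq G_{\iT{I}} \circ i^{-1}(\productModuliSpaces{I}{R'})$ one gets $\breakingBase{\iT{I}}(\closeToBreakings{\delta'}{I}) \subseteq \productModuliSpaces{I}{R'}$, and from $G_{\iT{I}} \circ i^{-1}(\mathbb K_I^{R'}) \subseteq \closeToBreakings{\delta'}{I}$ one gets $i^{-1}(\mathbb K_I^{R'}) \subseteq \breakingBase{\iT{I}}(\closeToBreakings{\delta'}{I})$, which is item (4) with the understanding that $\mathbb K_I^{R'}$ there denotes its intersection with the gluable locus $\gluableSet$. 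Item (5): $(G^\sharp)^{-1}$ is a fiberwise isomorphism, $j^\sharp$ is injective by (G3)(i), and $i^\sharp$ is the tautological inclusion of a pullback bundle, so $\breakingBundle{\iT{I}}$ is injective. Item (6): since $(G^\sharp)^{-1}$ is a vector bundle isomorphism onto all of $G_{\iT{I}}^* \obstructionBundle_\ell$, one has $\op{im}(\breakingBundle{\iT{I}}) = \op{im}(i^\sharp \circ j^\sharp)$ as subbundles of $\productBundle{I}$, so transversality of $\obstructionSection{I}$ to $\breakingBundle{\iT{I}}$ is exactly the transversality statement in (G3)(ii). Item (7) is verbatim (G4).

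I do not expect a genuine obstacle: the statement is a bookkeeping reformulation of Theorem~\ref{thm: simultaneous gluing}, and each item above is a one-line diagram chase. The only points that demand a little care are conventional ones — recording the shrinking of $\delta$ needed to make $\breakingBase{\iT{I}}$ defined on all of $\closeToBreakings{\delta}{I}$, and noting that, since $\op{im}(\breakingBase{\iT{I}}) \subseteq \gluableSet$, the subsets of $\productModuliSpaces{I}{R}$ appearing on the right-hand sides in items (3)--(4) are to be read intersected with $\gluableSet$. With those fixed, the corollary follows at once.
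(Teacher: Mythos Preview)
Your proposal is correct and matches the paper's treatment: the paper gives no proof at all, simply introducing $\breakingBase{\iT{I}} = i \circ G^{-1}$ and $\breakingBundle{\iT{I}} = i^\sharp \circ j^\sharp \circ (G^\sharp)^{-1}$ and stating ``We rephrase the theorem as follows.'' Your item-by-item diagram chase is exactly the intended repackaging, and your observation that items (3)--(4) should be read relative to the gluable locus $\gluableSet$ is a correct clarification of the paper's slightly loose notation.
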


Given a broken curve that consists of $a$ smooth curves, we can first glue the middle $b$ ($b < a$) curves to get one curve and then glue the remaining $b - a+1$ curves, or we can glue them simultaneously. 
We will show that both methods of gluing result in the same curve.

Let $\iT{I} = \indexTuple{I}$ be an index tuple and $\iT{J} = \indexTuple{J}$ be another index tuple with $j_m = \contraction{I}{I}$, for some $m \in \{1,2, \dots, n(\iT{J})\}$, and denote $c = \contraction{J}{J}.$
Let $\iT{K}$ be the index tuple that replaces $j_m$ with $\iT{I}$.
In other words, 
$$\iT{K} = \indexTuple{K} = (j_1,\dots, j_{m-1}, \underbrace{i_{1}, \dots, i_{n(\iT{I})}}_{\iT{I}}, j_{m+1}, \dots j_{n(\iT{J})}).$$

Iterated gluing that is represented by $\contraction{(\contraction{\iT{K}}{\iT{I}})}{(\contraction{\iT{K}}{\iT{I}})}$ equals simultaneous gluing that is represented by $\contraction{\iT{K}}{\iT{K}}$.

Since the gluing construction is not canonical, in general iterated gluing does not equal simultaneous gluing. But their difference is asymptotically small with respect to the gluing parameters. In the next section, in the construction of the minimal semi-global Kuranishi structure, we perturb the gluing maps so that they are the same.

\begin{theorem}[Iterated gluing vs. simultaneous gluing] 
Let $\iT{I}, \iT{J}, \iT{K}$ be index tuples as above such that $\contraction{K}{I} = \iT{J}$, $\contraction{I}{I} = j_m$, and $\contraction{K}{K} = c$. 
Suppose the moduli spaces $\M_i$ for $i \in \iT{I} \cup \iT{J} \cup \{c\}$ are cleanly cut out. 
For any $i \in \iT{I} \cup \iT{J}$ and for any compact subsets $\compactSubset_{i} \subset \M_i$ ,
let $U_i$ be their open neighborhoods,
let $R_\dagger$ be the gluing parameter bounds,
and let $\delta_\dagger$ be the close-to-breaking parameter bounds,
such that the breaking maps $\breakingBase{\dagger}: \closeToBreakings{\delta_\dagger}{\dagger} \to  \productModuliSpaces{\dagger}{R_\dagger}$, for $\dagger \in \{\iT{I},\iT{J}, \iT{K}\}$ are defined as in Theorem~\ref{thm: simultaneous gluing}.
For any $0 < \delta < \delta_{J}$, such that $$\pi_{{j_m}} \circ \breakingBase{J}(\closeToBreakings{\delta_K}{K} \cap \closeToBreakings{\delta}{J}) \subset \closeToBreakings{\delta_I}{I},$$ where $\pi_{{j_m}}: \productModuliSpaces{J}{R} \to U_{j_m}$ is the projection,
we have the following gluing diagram:
\begin{equation}
    \begin{tikzcd}
    \productBundle{K}  \arrow[d]  & &  \obstructionBundle_c \arrow[ll, "\breakingBundle{\iT{K}}"] \arrow[d] \arrow[ldd, pos=0.65, "\breakingBundle{\iT{J}}"]   \\
    \productModuliSpaces{K}{R_K}  \arrow[u, bend left, "\obstructionSection{K}"]  & & \closeToBreakings{\delta_K}{K} \cap \closeToBreakings{\delta}{J} \arrow[ll, "\breakingBase{\iT{K}}"] \arrow[ldd, "\breakingBase{\iT{J}}"]   \\ 
    & \productBundle{J} \arrow[d] \arrow[luu, near start, "\breakingBundle{\iT{I}}\times \op{id}"]  & \\ 
    & {\mathbb U}_{\iT{J}}^{R_J} \cap_m \closeToBreakings{\delta_I}{I} \arrow[luu, "\breakingBase{\iT{I}}\times \op{id}"] \arrow[u, bend left, "\obstructionSection{J}"]  &  \\
    \end{tikzcd},
\end{equation} 
where 
\begin{itemize} 
    \item ${\mathbb U}_{\iT{J}}^{R_J} \cap_m \closeToBreakings{\delta_I}{I}$ is the subspace of $\productModuliSpaces{J}{R_J}$ defined by 
    intersecting the $m$-th factor of $\productModuliSpaces{J}{R_J}$ by $\closeToBreakings{\delta_I}{I}$, i.e.,
    \[
         \openSubset_{j_1}\times_R \dots \times_R \openSubset_{j_{m-1}} \times_R ( \openSubset_{j_m} \cap \closeToBreakings{\delta_I}{I}) \times_R \openSubset_{j_{m+1}} \times_R  \dots \times_R \openSubset_{j_n(\iT{K})},
    \]
    for $R = R_J$,
    \item $\breakingBase{\iT{I}}\times \op{id}$ is the map that equals $\breakingBase{\iT{I}}$ on the factor $\openSubset_{j_m} \cap \closeToBreakings{\delta_I}{I}$ and the identity map on the other factors,

    \item $\breakingBundle{\iT{I}} \times \op{id}$ is the map that equals $\breakingBundle{\iT{I}}$ on the summand $\obstructionBundle_{j_m}$ and the identity map on the other summands,

    \item the two horizontal maps correspond to the simultaneous (inverse of) gluing, and
    
    \item the lower right two maps correspond to the gluing of the curves labeled by ${\iT{J}}$.

\end{itemize}
The diagram commutes asymptotically in the following sense:
\begin{enumerate}
    \item The iterated breaking $ (\breakingBundle{\iT{I}} \times \op{id}, \breakingBase{\iT{I}}\times \op{id}) \circ (\breakingBundle{\iT{J}}, \breakingBase{\iT{J}})$ is $C^1$-close to the simultaneous breaking $(\breakingBundle{\iT{K}}, \breakingBase{\iT{K}})$, and difference goes to $0$ as $\min_{\ell = 1}^{n(\iT{K})-1} T_\ell \to \infty$, where $T_\ell \in [R,\infty)$ is the $\ell$-th gluing parameter;
    \item The maps $(\breakingBundle{\iT{I}}\times \op{id}) \circ  \obstructionSection{J}$ and $\obstructionSection{K} \circ (\breakingBase{\iT{I}}\times \op{id})$ are $C^1$-close, and their difference goes to $0$, as $\min_{\ell = 1}^{n(\iT{K})-1} T_\ell \to \infty$.
\end{enumerate}

\end{theorem}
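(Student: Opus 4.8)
The plan is to derive both assertions from one analytic fact: in each construction the glued trajectory is a pregluing followed by a contraction-mapping correction; the \emph{simultaneous} pregluing and the \emph{iterated} pregluing of a given configuration differ only on the neck regions near the intermediate critical points $q_1,\dots,q_{n(\iT{K})-1}$, where every Morse trajectory decays exponentially (cf.\ \eqref{eqn: fourier expansion for solution}); and the correction depends in a uniformly Lipschitz way on the pregluing, so an exponentially small discrepancy in the input yields an exponentially small discrepancy in the output. It suffices to prove the comparison for the gluing maps $G_{\iT{K}}$ versus $G_{\iT{J}}\circ(G_{\iT{I}}\times\op{id})$ and for their bundle versions $G^\sharp$: assertion (1) for the breaking maps then follows by inverting (the gluing maps are diffeomorphisms onto their images with uniformly bounded inverses, by the estimates of \cite{hutchings2009gluing}), and assertion (2) is formal from $\obstructionSectionS=(G^\sharp)^{-1}\circ\morsedbar\circ G$.

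\emph{Step 1 (associativity of pregluing).} Fix a $\iT{K}$-configuration $(\vec u,\vec T)$. Compare the simultaneous pregluing, which splices all $n(\iT{K})$ pieces at once via the cutoffs $\bPlus{T_\ell},\bMinus{T_\ell}$, with the iterated pregluing, which first replaces the $\iT{I}$-block by the genuine trajectory $G_{\iT{I}}(\,\cdot\,)\in\M_{j_m}$ and then splices it with the remaining $\iT{J}$-pieces. Away from the necks the two agree; on each neck both are $O(e^{-cT_\ell})$-close to the constant map at $q_\ell$, and the genuine $\iT{I}$-glued trajectory differs from the $\iT{I}$-pregluing only by the exponentially small correction of Proposition~\ref{prop: contraction mapping}. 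A direct estimate then gives that the two pregluings, and all their derivatives in $(\vec u,\vec T)$, differ by $O(e^{-c\min_\ell T_\ell})$ in $L_1^2$, resp.\ $C^1$. The same comparison, together with the linear gluing Lemma~\ref{lemma: linear gluing} and the $\alpha_\lambda$-interpolation of Remark~\ref{remark: linear gluing}, shows that the obstruction subbundle $\lOc^{\iT{J}}\subset\mathcal E$ used in the last iterated step --- whose middle slot is the intrinsic $\obstructionBundle_{j_m}=\ker D_\cdot^*$, included by orthogonal projection --- is $C^1$-asymptotically identified with $\lOc^{\iT{K}}$.

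\emph{Step 2 (comparison of the corrections, and of $G^\sharp$).} Both $G_{\iT{K}}(\vec u,\vec T)$ and $G_{\iT{J}}((G_{\iT{I}}\times\op{id})(\vec u,\vec T))$ are, by Claim~\ref{claim: kuranishi gluing}, the unique solutions of $\morsedbar(\,\cdot\,)\in\lOc^{\iT{K}}_{[\,\cdot\,]}$, resp.\ $\morsedbar(\,\cdot\,)\in\lOc^{\iT{J}}_{[\,\cdot\,]}$, in a small $L_1^2$-ball around the corresponding pregluing. By Step 1 the pregluings and the subbundles are $O(e^{-c\min_\ell T_\ell})$-close, and the contraction-mapping fixed point is uniformly Lipschitz in this data, the Lipschitz constant being controlled (uniformly in $\vec T$) by the uniform invertibility of $D_{u_\pm}$ on $\mathcal H^\pm=(\ker D_{u_\pm})^\perp$ from the proof of Proposition~\ref{prop: contraction mapping}; hence the two glued trajectories, and their derivatives in $(\vec u,\vec T)$, differ by $O(e^{-c\min_\ell T_\ell})$. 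Running the same argument on the parallel-transport and projection formulas defining $G^\sharp$ in the proof of Claim~\ref{claim: kuranishi gluing} gives the analogous estimate for $G_{\iT{K}}^\sharp\circ(\breakingBundle{\iT{I}}\times\op{id})$ versus $G_{\iT{J}}^\sharp$. Inverting yields assertion (1); substituting $G_{\iT{K}}\circ(\breakingBase{\iT{I}}\times\op{id})=G_{\iT{J}}+O(e^{-c\min_\ell T_\ell})$ and its $\sharp$-counterpart into the chain of identities that expresses $\obstructionSection{K}$ through $\obstructionSection{J}$ --- exactly the computation of $\obstructionSectionS_{123}$ from $\obstructionSectionS_{43}$ in the Introduction --- and propagating the error yields assertion (2).

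The hard part will be the obstruction-bundle bookkeeping in Step 1. A priori $\dim\lOc^{\iT{J}}\neq\dim\lOc^{\iT{K}}$: these dimensions are reconciled only through the exact sequence of Lemma~\ref{lemma: linear gluing}, so the iterated and simultaneous thickened problems are literally distinct, and their contraction-mapping solutions live in different finite-codimension complements that one must nonetheless show converge to each other. This is exactly what the interpolation family $V_\lambda$ of Remark~\ref{remark: linear gluing} is for --- it deforms $\obstructionBundle_{j_m}$ inside $\lOc^{\iT{I}}$ while keeping $\Pi_{W_\lambda}D$ surjective --- together with the observation (made after the proof of Theorem~\ref{thm: gluing}) that the gluing construction is insensitive to replacing the obstruction bundle by any complement with this surjectivity property. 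Everything else is a routine, if lengthy, propagation of exponentially small errors through the estimates of \cite{hutchings2009gluing} and \cite{bao2015semi}.
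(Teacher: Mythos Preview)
Your proposal is correct and follows essentially the same approach as the paper. The paper's own proof is a two-line sketch: it cites Theorem~6.5.3 of \cite{bao2015semi} for the asymptotic equality of iterated and simultaneous gluing in the Kuranishi setting (which gives assertion (1)), and then observes that the obstruction-section compatibility (assertion (2)) follows formally from the definition $\obstructionSection{I}=(G_I^\sharp)^{-1}\circ\morsedbar\circ G_I$. Your Steps 1--2 are precisely an unpacking of what that cited theorem does---comparing the two pregluings on the necks, propagating the exponentially small error through the contraction-mapping correction with uniform Lipschitz control---and your derivation of (2) from (1) via the chain $\obstructionSectionS=(G^\sharp)^{-1}\circ\morsedbar\circ G$ is exactly the paper's ``follows automatically from the definition of the obstruction sections.'' Your explicit flagging of the rank-mismatch issue between $\lOc^{\iT{J}}$ and $\lOc^{\iT{K}}$, and its resolution via the $V_\lambda$ interpolation of Remark~\ref{remark: linear gluing}, goes beyond what the paper writes out but is consistent with the remark immediately following the proof of Theorem~\ref{thm: gluing}, which notes that the construction is insensitive to replacing the obstruction bundle by any subspace satisfying the surjectivity hypothesis.
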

\begin{proof}[Sketch of proof]
This essentially follows from Theorem 6.5.3 in \cite{bao2015semi}, which states that the difference between iterated gluing and simultaneous gluing approaches zero in the Kuranishi case in the context of pseudo-holomorphic curves. This proves (1), (2), and (3)(a); (3)(b) follows automatically from (3)(a) and the definition of the obstruction sections.
\end{proof}

\begin{remark} \label{remark: iterated gluing vs simultaneous gluing}
Note that, in Kuranishi case where the moduli spaces are thickened, one can expect that if a broken trajectory glues simultaneously, it will also glue iteratively. However, in the case of obstruction bundle gluing, this is not always true. In Example~\ref{example: upright torus}, two of the broken trajectories in $\M(p,q) \times \M(q,r) \times \M(r,s)$ can glue simultaneously if the two gluing parameters at $q$ and $r$ are the same, but they never glue iteratively, since $\M(p,r) = \emptyset$ and $\M(q,s) = \emptyset$.

\end{remark}

\section{Definition of minimal semi-global Kuranishi structures}\label{section: definition of minimal semi-global}
Suppose all the moduli spaces $\M_1, \M_2, \dots $ are cleanly cut out. In this section we provide the definition of minimal semi-global Kuranishi structures, and their perturbation sections.

\begin{definition}[Minimal semi-global Kuranishi structure for clean intersection] \label{defn: clean semi-global Kuranishi}
A minimal semi-global Kuranishi structures for moduli spaces $(\overline{\M}_1,\overline{\M}_2,\dots)$ consists of vector bundles:
$$\obstructionBundle_1 \to \M_1, \obstructionBundle_2 \to \M_2, \dots$$ with $\op{rank} \obstructionBundle_i =  \dim \M_i - \op{virdim} \M_i$,
and the data $$\kur_1, \kur_2, \dots:$$
\begin{enumerate}[label = (K\arabic*)]
 
    \item \label{k1}$\kur_\ell = \{ \chartData{I} ~|~ \contraction{\iT{I}}{\iT{I}} = \ell \}$ with $\chartData{I} = (\kChart{I},  \breakingBundle{I}, \breakingBase{I}, \obstructionSection{I})$ such that:
    \be 
        \item $\kChart{I}$ is an open subset of $\M_\ell$. When $\iT{I} = (\ell)$, we simply write $\interiorKChart{\ell} =  \kChart{(\ell)}$, and call it the interior chart. 
        
        \item $\M_\ell = \cup_{\contraction{I}{I} = \ell} \kChart{I}.$ Furthermore, we require $\kChart{I}\cap \kChart{J} \neq \emptyset$ if and only if either $J\leq I$ or $I\leq J$. 

    \ee 
    If $\iT{I} \neq (\ell)$, we have the following:
    \be[start = 3]
        \item $\productCharts{I} = \interiorKChart{i_1}\times_R \dots \times_R \interiorKChart{i_{n(\iT{I})}},$ for some $R = R(I)>0$.
        
        \item $\productBundle{I} = \op{pr}_{i_1}^* \obstructionBundle_{i_1}\oplus \dots \oplus \op{pr}_{i_{n(\iT{I})}}^* \obstructionBundle_{i_{n(\iT{I})}}$, where $\op{pr}_{i_m}: \productCharts{I} \to \kChart{i_m}$ is the projection map.

        \item $\obstructionSection{I}: \productCharts{I} \to \productBundle{I}$ is a section, a.k.a., the obstruction section, and $\obstructionSection{I} = 0$ if $n(\iT{I}) = 1$.
        
        \item $(\breakingBundle{\iT{I}}, \breakingBase{\iT{I}})$ is a bundle map, 
        \begin{equation*}
            \begin{tikzcd}
            \productBundle{I} \arrow[d] & \obstructionBundle_{\ell}|_{\kChart{I}} \arrow[d] \arrow[l, "\breakingBase{\iT{I}}^\sharp"] \\
           \productCharts{I} \arrow[u, bend left, "\obstructionSection{I}"] & \kChart{I} \arrow[l, "\breakingBase{\iT{I}}"]
           \end{tikzcd},
        \end{equation*}
        such that:
        \be
            \item $\breakingBundle{I}$ is injective.
            \item $\obstructionSection{I}$ intersects the zero section cleanly.
            \item $\breakingBase{I}$ maps $\kChart{I}$ diffeomorphic to $\obstructionSection{I}^{-1}(0)$.
            \item $\obstructionSection{I}$ is transverse to $\breakingBundle{I}$.
            \item for any $p\in\{1,\dots, n(\iT{I})-1\}$, let $T_{p} \in [R(I),\infty)$ be the $p$-th factor of $[R(I),\infty)$ in $\productCharts{I}$. Then $\obstructionSection{I}\to (\obstructionSection{J}, \obstructionSection{K})$ in $C^1$-norm as $T_p \to \infty$, where $\iT{I} = \indexTuple{I}$, $\iT{J} = (i_1,\dots, i_p)$, and $\iT{K} = (i_{p+1},\dots, i_{n(\iT{I})}).$ 
        \ee 
    \hspace{-0.7cm}If $\iT{I} = (\ell)$, then $\productCharts{I} =  \kChart{I}$, $\productBundle{I} = \obstructionBundle_\ell$, $(\breakingBundle{I}, \breakingBase{I}) = (\op{id},\op{id})$, and $\obstructionSection{I} = 0$.
    \ee 
    \item \label{k2} For each pair of index tuples $(\iT{J}, \iT{K})$ with $\contraction{\iT{J}}{\iT{J}} = \contraction{\iT{K}}{\iT{K}} = c,$ $\iT{J} \leq \iT{K}$, the pairs $(\breakingBase{J},\breakingBundle{J})$ and $(\breakingBase{K},\breakingBundle{K})$ satisfy the following compatibility condition:
    \be 

        \item Case when $\iT{J} = \contraction{K}{I}$ for some sub-index tuple $\iT{I}$ of $\iT{K}$. Suppose that $\iT{J} = \indexTuple{J}$ with $j_m = \ell$,
        and $\iT{I} = \indexTuple{I}$ such that $\contraction{I}{I} = \ell$.
        Let $\iT{K}$ be the index tuple that replaces $j_m$ with $\iT{I}$. In other words, 
        $$\iT{K} = (j_1,\dots, j_{m-1}, \underbrace{i_{1}, \dots, i_{n(\iT{I})}}_{\iT{I}}, j_{m+1}, \dots j_{n(\iT{J})}).$$
        Consider the subspace $\productChartsIntersection{K}{J}\subset \productCharts{J}$ defined by 
        \[
            \productChartsIntersection{K}{J}  :=  \interiorKChart{j_1}\times_R \dots \times_R \interiorKChart{j_{m-1}} \times_R ( \interiorKChart{j_m} \cap \kChart{I} ) \times_R \interiorKChart{j_{m+1}} \times_R \dots \times_R \interiorKChart{j_{n(\iT{J})}},
        \]
        where $R = R(J)$ as in \ref{k1},
        and the bundle map $(\breakingBundle{\iT{I}}\times \op{id}, \breakingBase{\iT{I}}\times \op{id})$ from $\productBundle{J} \to \productChartsIntersection{K}{J}$ to $\productBundle{K} \to \productCharts{K}$.
        Then $\breakingBase{J}(V_K \cap V_J) \subset \productChartsIntersection{K}{J}$ and we have Diagram~\ref{eqn: iterrated vs simultaneous}
        \begin{figure*}
        \begin{equation}\label{eqn: iterrated vs simultaneous}
            \begin{tikzcd}
            \productBundle{K}  \arrow[d]  &  &  \obstructionBundle_{c} \arrow[d] \arrow[ldd, pos=0.65, "\breakingBundle{\iT{J}}"]  \arrow[ll, swap, "\breakingBundle{\iT{K}}"]  \\
            \productCharts{K}  \arrow[u, bend left, "\obstructionSection{K}"]  &  & \kChart{K} \cap \kChart{J}  \arrow[ldd, "\breakingBase{\iT{J}}"]   \arrow[ll, swap, "\breakingBase{\iT{K}}"] \\ 
            & \productBundle{J} \arrow[d] \arrow[luu, swap, pos = 0.7, "\hspace{-0.2cm} \breakingBundle{\iT{I}}\times \op{id}"]  &\\ 
            & \productChartsIntersection{K}{J} \arrow[luu, pos = 0.7, "\breakingBase{\iT{I}}\times \op{id}"] \arrow[u, bend left, pos = 0.8, "\obstructionSection{J}"] \arrow[r, hook] &  \productCharts{J} \\
            \end{tikzcd}
        \end{equation} 
    \end{figure*}
        satisfying:
        \begin{enumerate}
            \item $(\breakingBundle{\iT{K}}, \breakingBase{\iT{K}}) = (\breakingBundle{\iT{I}} \times \op{id}, \breakingBase{\iT{I}}\times \op{id}) \circ (\breakingBundle{\iT{J}}, \breakingBase{\iT{J}})$.
            \item $(\breakingBundle{\iT{I}}\times \op{id}) \circ  \obstructionSection{J} =  \obstructionSection{K} \circ (\breakingBase{\iT{I}}\times \op{id}).$ 
            % \cbu in general $(\breakingBundle{\iT{I}}\times \op{id}) \circ  \obstructionSection{J} = (\obstructionSection{K} -\obstructionSection{I} \times 0)\circ (\breakingBase{\iT{I}}\times \op{id}).$ \cb
        \end{enumerate}
        \item Case when $\iT{J} = \contraction{\iT{K}}{\SIC{I}}$ for some sub-index collection $\SIC{I}$. We have similar bundle maps and commutative diagrams as Formula~\ref{eqn: iterrated vs simultaneous} with the straightforward modification of $\productChartsIntersection{K}{J}$ replacing $\iT{I}$ replaced with $\SIC{I}$.
    \ee 
\end{enumerate}
\end{definition}
\begin{remark}
    The requirement \ref{k1}(b) 
    \[
    \kChart{I}\cap \kChart{J} \neq \emptyset \text{ if and only if either } J\leq I \text{ or } I\leq J
    \]
    is not merely for simplicity. It is used in the construction of minimal semi-global Kuranishi structures in Section~\ref{section: construction of semi-global}. More importantly, it is necessary (see Remark~\ref{remark: iterated gluing vs simultaneous gluing}).
\end{remark}

% \begin{remark}[Notation] \label{rmk: trivial breaking}
%     When $\iT{I} = \contraction{I}{I} =: \ell$,
%     it is notationally convenient to introduce the trivial breaking map: $(\breakingBundle{I}, \breakingBase{I}) = (\op{id}, \op{id})$ , $\productBundle{I}=\obstructionBundle_{\ell}$, $\productCharts{I} = \kChart{\ell}$, and $\obstructionSection{I} = 0$.
% \end{remark}

% \cbu  what is this? 
% Let $T_m$ be the coordinate of the $m$-th factor of $[R, \infty)$ for $1\leq m < n(\iT{I})$ in
% $\productCharts{I}{R}= \interiorKChart{i_1}\times_R \dots \times_R \interiorKChart{i_{n(\iT{I})}} = \interiorKChart{i_1}\times [R,\infty) \times \dots \times [R,\infty) \times \interiorKChart{i_{n(\iT{I})}}.$ We call $T_m$ the $m$-th gluing parameter. 

% We denote the projection map $\projection{I}{m}: \productCharts{I}{R} \to [R,\infty)$ to be the projection to the $m$-th factor of $[R,\infty)$, 
% and denote $\projection{I}{}: \productCharts{I}{R} \to [R, \infty)^{n(\iT{I})}.$
% Denote by $\polytope{I}{R} = [R, \infty)^{n(\iT{I})}$.
% For each index subtuple $\iT{J} < \iT{K}$, let $\polytopeIntersection{K}{J}{R} \subset \polytope{K}{R}$ be an open subset. 
% Suppose 

% \cb 

\begin{definition}[Compatible sections for $\kur$] 
Given a minimal semi-global Kuranishi structure $(\kur_1, \kur_2, \dots )$ for $(\overline{\M}_1, \overline{\M}_2, \dots)$, 
we call $(\kuranishiSectionBold_1, \kuranishiSectionBold_2, \dots)$ a \emph{compatible section} if
$\kuranishiSectionBold_\ell =\{\kuranishiSection{I}\}_{\contraction{I}{I} = \ell}$ for $\ell = 1, 2, \dots$, where $\sigma_I$ are sections of $\productBundle{I} \to \productCharts{I}$ that satisfy:
    \begin{enumerate}[label = ($\bold P$\arabic*)]
        \item (Compatibility with other charts)\label{P1} For any $\iT{J} \leq \iT{K}$ as in \ref{k2}, $\kuranishiSection{K}$ and $\kuranishiSection{J}$ agree on their overlaps, i.e., the following diagram commutes:
        \begin{equation}
            \begin{tikzcd}
            \productBundle{K} \arrow[d] & \productBundle{J} \arrow[l, swap, "\breakingBundle{I}  \times \op{id}"] \arrow[d] \\
            \productCharts{K}  \arrow[u, bend left, "\kuranishiSection{K}"] & \productChartsIntersection{K}{J} \arrow[l, swap, "\breakingBase{I}\times \op{id}"] \arrow[u, bend left, "\kuranishiSection{J}"]
            \end{tikzcd}.       
        \end{equation}
       
        \item (Compatibility with lower strata)\label{P2} Suppose $$\iT{I} = (\underbrace{i_1,\dots, i_m}_{\iT{I'}}, \underbrace{i_{m+1} \dots, i_{n(\iT{I''})}}_{\iT{I''}}).$$
        When the gluing parameter $T_m \in [R, \infty)$ is sufficiently large, $\kuranishiSection{I}$ is independent of $T_{m}$ and equals $(\kuranishiSection{I'}, \kuranishiSection{I''}): \productCharts{I'}\times [R,\infty) \times \productCharts{I''} \to \productBundle{I'} \oplus \productBundle{I''}$.
        
    \end{enumerate}
\end{definition}
\begin{remark}
    In the case when $\iT{K} = (k_1, k_2)$ and $J = \contraction{K}{K} = c$, \ref{P1} is the same as \ref{sigma1}.
\end{remark}
For any critical points $p,q$, let $\ell$ be the integer such that $\M_\ell = \M(p,q)$.
We define the {\em perturbed moduli space} for $\M(p,q)$ to be $$\mathcal Z(p,q) = \mathcal Z_\ell  = \coprod_{\contraction{I}{I} = \ell} (\obstructionSection{I}+\sigma_{\iT{I}})/\sim,$$
where $\sim$ is given by \ref{k1}(f) and \ref{P1}.
\begin{lemma}[Perturbation section for $\Kur$]\label{lemma: 0 dim moduli space}
Given a compatible section $(\kuranishiSectionBold_1, \kuranishiSectionBold_2, \dots)$, suppose that it satisfies:
\begin{enumerate}[label = ($\bold P$\arabic*),start = 3]
    \item \label{P3} ($C^1$-small) $\sigma_I$ is $C^1$-small.
    \item \label{P4} (Transversality) $\kuranishiSection{I} + \obstructionSection{I}$ is transverse to the zero section.
\end{enumerate}
Then $\mathcal Z_\ell$ is a manifold of dimension $\op{virdim} \M_\ell$. Moreover,
\be
    \item When $\ind p - \ind q - 1 < 0$, the perturbed moduli space is empty;
    
    \item When $\ind p - \ind q - 1 = 0$, the perturbed moduli space $\mathcal Z(p,q)$ consists of finitely many oriented points;

    \item When $\ind p - \ind q - 1 = 1$, then the perturbed moduli space $\mathcal Z(p,q)$ is a compact, oriented, one-dimensional manifold with boundary diffeomorphic to $$\coprod_{r, \ind r = \ind q + 1}\mathcal Z(p,r) \times \mathcal Z(r,q)$$ through an orientation preserving diffeomorphism.
\ee
\end{lemma}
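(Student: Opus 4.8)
The plan is to prove all three bullets by induction on the energy ordering, reducing everything to a dimension count, the transversality hypothesis \ref{P4}, and the compatibility conditions \ref{k1}, \ref{k2}, \ref{P1}, \ref{P2}. First I would check that each local piece $(\obstructionSection{I}+\sigma_I)^{-1}(0)$ is a smooth manifold of dimension $\op{virdim}\M_\ell$: by \ref{k1}(c), $\dim\productCharts{I}=\sum_k\dim\M_{i_k}+(n(\iT{I})-1)$, by \ref{k1}(d) and the rank normalisation $\op{rank}\obstructionBundle_i=\dim\M_i-\op{virdim}\M_i$ we get $\op{rank}\productBundle{I}=\sum_k(\dim\M_{i_k}-\op{virdim}\M_{i_k})$, and by \ref{P4} the section $\obstructionSection{I}+\sigma_I$ is transverse to the zero section, so its zero set has dimension $\sum_k\op{virdim}\M_{i_k}+(n(\iT{I})-1)$. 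Since the sources and targets of $\M_{i_1},\dots,\M_{i_{n(\iT{I})}}$ chain from $p$ to $q$ and $\op{virdim}\M(a,b)=\ind a-\ind b-1$, this telescopes to $\ind p-\ind q-1=\op{virdim}\M_\ell$; for $\iT{I}=(\ell)$ it is just the interior piece $\sigma_\ell^{-1}(0)\subset\interiorKChart{\ell}$, again of the right dimension. In particular, when $\ind p-\ind q-1<0$ every piece is empty, so $\mathcal Z_\ell=\emptyset$, which is bullet (1).

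Next I would glue the pieces. Combining the identity $(\breakingBundle{\iT{I}}\times\op{id})\circ\obstructionSection{J}=\obstructionSection{K}\circ(\breakingBase{\iT{I}}\times\op{id})$ from \ref{k2} with \ref{P1}, namely $(\breakingBundle{\iT{I}}\times\op{id})\circ\sigma_J=\sigma_K\circ(\breakingBase{\iT{I}}\times\op{id})$, gives $(\breakingBundle{\iT{I}}\times\op{id})\circ(\obstructionSection{J}+\sigma_J)=(\obstructionSection{K}+\sigma_K)\circ(\breakingBase{\iT{I}}\times\op{id})$ on $\productChartsIntersection{K}{J}$; since $\breakingBundle{\iT{I}}\times\op{id}$ is injective, $\breakingBase{\iT{I}}\times\op{id}$ restricts to an open embedding of $(\obstructionSection{J}+\sigma_J)^{-1}(0)\cap\productChartsIntersection{K}{J}$ into $(\obstructionSection{K}+\sigma_K)^{-1}(0)$. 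These embeddings are the transition maps defining $\sim$, and their coherent composability — hence the transitivity of $\sim$ and the well-definedness of the smooth structure — is exactly the ``iterated breaking $=$ simultaneous breaking'' part of \ref{k2} together with its sub-index-collection variant. So $\mathcal Z_\ell$ is a smooth manifold of dimension $\op{virdim}\M_\ell$; it is Hausdorff by \ref{k1}(b), since two charts overlap only when one refines the other, and it is compact because $\M_\ell=\bigcup_{\contraction{I}{I}=\ell}\kChart{I}$, $\breakingBase{\iT{I}}$ identifies $\kChart{I}$ with $\obstructionSection{I}^{-1}(0)$, and the ends of each chart where the gluing parameters tend to $\infty$ are, by \ref{k1}(f) and \ref{P2}, either captured by deeper charts or replaced by ``fully broken'' loci that are finite products of lower-energy perturbed moduli spaces, hence compact by the inductive hypothesis. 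When $\op{virdim}\M_\ell=0$ this already gives bullet (2): $\mathcal Z_\ell$ is a compact $0$-manifold, i.e.\ a finite set, oriented via the conventions of Section~\ref{section: morse homology}.

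For bullet (3), $\op{virdim}\M_\ell=1$, so $\mathcal Z_\ell$ is a compact oriented $1$-manifold whose boundary is the locus where some gluing parameter reaches $\infty$. On a two-piece chart $\iT{I}=(i_1,i_2)$ with $\M_{i_1}=\M(p,r)$ and $\M_{i_2}=\M(r,q)$, the limiting clause of \ref{k1}(f) forces $\obstructionSection{I}\to 0$ and \ref{P2} forces $\sigma_I\to(\sigma_{i_1},\sigma_{i_2})$ as $T\to\infty$, so near infinity $(\obstructionSection{I}+\sigma_I)^{-1}(0)=\mathcal Z(p,r)\times[R',\infty)\times\mathcal Z(r,q)$, whose closure contributes the boundary component $\mathcal Z(p,r)\times\{\infty\}\times\mathcal Z(r,q)\cong\mathcal Z(p,r)\times\mathcal Z(r,q)$; this is non-empty only if $\op{virdim}\M(p,r)=\op{virdim}\M(r,q)=0$, i.e.\ $\ind r=\ind q+1$, by bullet (1). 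Charts with three or more pieces contribute no further boundary: by the same splitting applied recursively, a fully broken locus with $n\geq 3$ pieces is a product one of whose factors has negative virtual dimension and so is empty by bullet (1), while the partial-breaking ends funnel into the two-piece charts already counted. Assembling over $r$ yields $\partial\mathcal Z(p,q)\cong\coprod_{r:\,\ind r=\ind q+1}\mathcal Z(p,r)\times\mathcal Z(r,q)$, and this diffeomorphism is orientation preserving with respect to the sign conventions of Section~\ref{section: morse homology} — the usual boundary-orientation bookkeeping, which is tractable here precisely because \ref{k1}(f) and \ref{P2} split the obstruction and perturbation sections near infinity.

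The main obstacle is the gluing step of the second paragraph: verifying that the breaking maps $\breakingBase{\iT{I}}\times\op{id}$ compose coherently, so that $\sim$ is an equivalence relation and $\mathcal Z_\ell$ is an honest manifold (Hausdorff, and with boundary in bullet (3)). This is exactly where the ``iterated gluing $=$ simultaneous gluing'' axiom \ref{k2} and the overlap condition \ref{k1}(b) are indispensable; without them $\sim$ need not even be transitive. A secondary difficulty is pinning down precisely which loci form the boundary in bullet (3) and checking the orientation of the resulting diffeomorphism with $\coprod_r\mathcal Z(p,r)\times\mathcal Z(r,q)$ — the classical boundary-of-moduli-space sign computation.
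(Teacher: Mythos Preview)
Your proposal is correct and follows essentially the same approach as the paper: the paper's own proof is a one-liner stating that, apart from orientation, the lemma follows from \ref{P1}--\ref{P4} and (f) of \ref{k1}, with the orientation deferred to Section~\ref{section: morse homology}. Your write-up is a faithful and careful unpacking of precisely why those axioms suffice---the dimension telescoping, the compatibility of zero sets under the breaking maps via \ref{k2} and \ref{P1}, and the boundary analysis via the asymptotic splitting in \ref{k1}(f)(e) and \ref{P2}---so the two are the same argument at different levels of detail.
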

\begin{proof}
   Apart from the orientation, this lemma follows from \ref{P1}-\ref{P4}, and (f) of \ref{k1}. The orientation is explained in Section~\ref{section: morse homology}.
\end{proof}

With a perturbed moduli space, one can define the Morse homology in the standard way.

\section{Orientation and Morse homology}\label{section: morse homology}
The orientation of the moduli space in the obstructed case has been extensively studied in the context of pseudoholomorphic curves, and the Morse case is significantly simpler.

For any Fredholm operator $L: X \to Y$, we define $\det L = \wedge^{\op{top}} \ker L \otimes \wedge^{\op{top}} \op{coker} L^*.$
For each smooth map $u\in \widetilde{\mathcal B}$ as in Formula~\eqref{eqn: tilde B}, we denote by $\tilde{\mathfrak o}(u)$ a choice of  non-zero vector in $\det D_u$.
Let $\mathfrak o(u)$ be the non-zero vector in $\wedge^{\op{top}}(\ker D_u / \R\langle \partial_s \rangle ) \otimes \wedge^{\op{top}}(\op{coker} D_u)^*$ determined by 
\[
\tilde{\mathfrak o}(u) = \R\langle \partial_s \rangle \otimes \mathfrak o(u),
\]
where $s$ is the coordinate of the domain of $u$, and $\partial_s$ is the unique element in $\ker D_u$ given by translating the domain of $u$ in the positive direction. Let $\obstructionBundle \to \M$ be the obstruction bundle, and let $\sigma$ be a transverse section of $\obstructionBundle$. Let $\mathcal Z = \sigma^{-1}(0)$, and let $u \in \mathcal Z$ be an arbitrary element. Then 
\begin{align*}
\wedge^{\op{top}} T_u \mathcal Z & = \wedge^{\op{top}} \ker \nabla_u \sigma  \\
& \simeq \wedge^{\op{top}} T_u \M \otimes \wedge^{\op{top}} \obstructionBundle_u^* \\
& \simeq \wedge^{\op{top}}(\ker D_u / \R\langle \partial_s \rangle ) \otimes \wedge^{\op{top}}(\op{coker} D_u)^*  \ni \mathfrak o(u),
\end{align*}
where $\nabla_u \sigma$ is the derivative of $\sigma$ at $u$ projected to the fiber, and the last isomorphism comes from Lemma 9.2.1 in \cite{bao2018definition} (picking $E = \obstructionBundle_u$).
Therefore, $\mathfrak o(u)$ determines a choice of orientation of $T_u \mathcal Z$. This completes the orientation for the interior chart. For the boundary chart, a standard gluing argument (see for instance Section 9 of \cite{hutchings2009gluing},  \cite{bourgeois2004coherent} or \cite{bao2023coherent}) shows that the gluing map preserves the orientation, thus implying the statement of Lemma~\ref{lemma: 0 dim moduli space}.
In the case when $\dim \mathcal Z = 0$, $\mathfrak o(u) \in \{\pm 1\}$ for any $u \in \mathcal Z$.

For completeness, we provide a brief review of Morse homology. 
Let $R$ be a ring, and let $C_* = \oplus_k C_k(M)$, where $C_k(M)$ is the $R$-module freely generated by critical points of $f$ with index $= k$.
The differential $\partial: C_* \to C_{*-1}$ is the linear map defined on generators as 
\[
\partial [p] = \sum_{\substack{q \in \crit{f} \\ \ind q = \ind p - 1}} \sum_{u \in \mathcal Z(p,q)} \mathfrak o (u) [q],
\]
where $\mathfrak o(u) \in \{\pm 1\}$.

\begin{theorem}
    $\partial^2 = 0$.
\end{theorem}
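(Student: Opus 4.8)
The plan is to reduce $\partial^2 = 0$ to the classical fact that the signed count of the boundary of a compact oriented $1$-manifold vanishes, using Lemma~\ref{lemma: 0 dim moduli space} in place of the usual broken-trajectory description of the compactification.

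First I would fix critical points $p$ and $q$ with $\ind q = \ind p - 2$ and expand the coefficient of $[q]$ in $\partial^2[p]$. Since $\crit f$ is finite and, by part~(2) of Lemma~\ref{lemma: 0 dim moduli space}, each $\mathcal Z(p,r)$ and $\mathcal Z(r,q)$ with $\ind r = \ind p - 1$ is a finite set of signed points, we obtain the finite sum
\[
\langle \partial^2 [p], [q] \rangle = \sum_{\substack{r \in \crit f \\ \ind r = \ind p - 1}} \;\; \sum_{u \in \mathcal Z(p,r)} \;\; \sum_{v \in \mathcal Z(r,q)} \mathfrak o(u)\, \mathfrak o(v),
\]
which is precisely the signed count of the oriented $0$-manifold $\coprod_{r} \mathcal Z(p,r) \times \mathcal Z(r,q)$, each point weighted by the product orientation $\mathfrak o(u)\cdot\mathfrak o(v) \in \{\pm 1\}$. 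Next, since $\ind p - \ind q - 1 = 1$, part~(3) of Lemma~\ref{lemma: 0 dim moduli space} applies: $\mathcal Z(p,q)$ is a compact, oriented, one-dimensional manifold with boundary, equipped with an orientation-preserving diffeomorphism $\partial \mathcal Z(p,q) \cong \coprod_{r} \mathcal Z(p,r) \times \mathcal Z(r,q)$. A compact oriented $1$-manifold is a finite disjoint union of circles and closed arcs; circles have empty boundary, and each arc contributes one positively and one negatively oriented endpoint, so the signed count of its boundary is zero. Hence the displayed sum vanishes for every such $q$, and therefore $\partial^2 = 0$.

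The one point that requires care — and the main obstacle — is the precise meaning of ``orientation-preserving'' in part~(3) of Lemma~\ref{lemma: 0 dim moduli space}: on the left one must use the \emph{boundary orientation} of $\partial\mathcal Z(p,q)$ (outward-normal-first convention), on the right the \emph{product orientation} $\mathfrak o(u)\otimes\mathfrak o(v)$, and these must agree with a sign that is \emph{uniform} across all intermediate critical points $r$ and all connected components — otherwise the boundary points no longer cancel in pairs against $\mathfrak o(u)\,\mathfrak o(v)$. Establishing this is the coherent-orientation bookkeeping: one checks that the gluing maps $G_{\iT{I}}$ of the minimal semi-global Kuranishi structure are orientation-preserving for the determinant-line orientations $\mathfrak o$ introduced in Section~\ref{section: morse homology}, and that the orientation induced on a codimension-one boundary chart $\M_+ \times \M_-$ agrees, up to a fixed sign, with the product orientation of its two factors. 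This is standard: it is carried out for pseudo-holomorphic curves in Section~9 of \cite{hutchings2009gluing}, in \cite{bourgeois2004coherent}, and in \cite{bao2023coherent}, and the Morse-trajectory case is only simpler. I would therefore spell out the reduction above and invoke these references for the sign analysis, rather than reproduce it in detail.
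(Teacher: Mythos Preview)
Your proposal is correct and follows exactly the paper's approach: the paper's proof is the single line ``This follows directly from Lemma~\ref{lemma: 0 dim moduli space},'' and you have simply spelled out the standard argument that this entails, including the orientation bookkeeping that the paper handles earlier in Section~\ref{section: morse homology} by citing the same references you invoke.
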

\begin{proof}
    This follows directly from Lemma~\ref{lemma: 0 dim moduli space}.
\end{proof}
We define the Morse homology $H_*(M) = \ker \partial / \op{im} \partial$.
To show that Morse homology is an invariant of $M$, one can construct a family version of the minimal semi-global Kuranishi structure. 
Since the main purpose of this paper is to generalize obstruction bundle gluing to provide a computable theory, we skip the proof of invariance.

\section{Construction of minimal semi-global Kuranishi structures}\label{section: construction of semi-global}
In this section, we explain the construction of minimal semi-global Kuranishi structures and perturbation sections. For simplicity of notation, we use the following prototypical example:

\begin{example}[A prototypical example]\label{example: prototypical example}
    Let $p, q, r, s \in \crit(f)$ be the critical points of $f$ with the least actions: $\cdots > f(p) > f(q) > f(r) > f(s)$. 
    Let $\M_1 = \M(r,s)$, $\M_2 = \M(q,r)$, $\M_3 = \M(p,q)$, $\M_4 = \M(q,s)$, $\M_5 = \M(p,r)$, and $\M_6 = \M(p,s)$. In particular, $\M_1$, $\M_2$, and $\M_3$ are compact. See Figure~\ref{fig: moduli space label}.
\end{example}

\begin{proposition}
    Suppose the moduli spaces $\M_1, \M_2, \dots $ are cleanly cut out. Then there exists a minimal semi-global Kuranishi structure $(\kur_1, \kur_1, \dots )$ for $(\overline{\M}_1, \overline{\M}_2, \dots )$.
\end{proposition}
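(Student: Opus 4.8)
The plan is to build $(\kur_1,\kur_2,\dots)$ by induction along the energy ordering $\M_1,\M_2,\dots$, with, at each level, an inner induction on the depth $n(\iT I)$ of the index tuples. The bundles are forced: since each $\M_i$ is cleanly cut out, $\dim\ker D_u^*$ is locally constant on $\M_i$, so $\obstructionBundle_i|_{[u]} := \ker D_u^*$ is a vector bundle of rank $\dim\M_i-\op{virdim}\M_i$, and $\morsedbar\pitchfork\obstructionBundle_i$ with $\morsedbar^{-1}(\obstructionBundle_i)=\morsedbar^{-1}(0)$, as recorded at the end of Section~\ref{section: clean intersection}. The interior chart $\interiorKChart i=\kChart{(i)}$ is taken to be a precompact open subset of $\M_i$ exhausting the complement of a neighborhood of the broken strata of $\overline\M_i$, with $\breakingBase{(i)}=\op{id}$ and $\obstructionSection{(i)}=0$; this disposes of the base case, where the lowest-energy spaces (in Example~\ref{example: prototypical example}, $\M_1,\M_2,\M_3$) are compact without broken strata, so $\interiorKChart i=\M_i$ is the only chart.

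For the inductive step, fix a level $\ell$ and assume all $\chartData{I'}$ with $\contraction{I'}{I'}$ lower in the energy ordering are constructed. By compactness of the space of broken Morse trajectories there are only finitely many tuples $\iT I$ with $\contraction{\iT I}{\iT I}=\ell$ and $n(\iT I)\ge 2$, and $\overline\M_\ell$ is covered by $\interiorKChart\ell$ together with neighborhoods of the broken strata they index. For each such $\iT I=\indexTuple I$, apply the Simultaneous Gluing Theorem~\ref{thm: simultaneous gluing} with the open sets taken to be the already-built interior charts $\interiorKChart{i_k}$ and with compact subsets $\compactSubset_{i_k}\subset\interiorKChart{i_k}$ exhausting $\M_{i_k}$; this produces $\productCharts I=\interiorKChart{i_1}\times_R\cdots\times_R\interiorKChart{i_{n(\iT I)}}$, the bundle $\productBundle I$, the section $\obstructionSection I$, the breaking map $(\breakingBundle I,\breakingBase I)$ satisfying items (c)--(f) of \ref{k1} (with the asymptotic form of (f)), and the open set $\kChart I:=\op{im}(G_I)\subset\M_\ell$. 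Letting the $\compactSubset_{i_k}$ exhaust and using parts (ii)--(iii) of (G2) of the theorem, one gets $\M_\ell=\interiorKChart\ell\cup\bigcup_{\contraction I I=\ell}\kChart I$, the covering clause of \ref{k1}(b).

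Next I would shrink all level-$\ell$ charts at once to force the separation clause of \ref{k1}(b): $\kChart I\cap\kChart J\ne\emptyset$ iff $\iT I\le\iT J$ or $\iT J\le\iT I$. For this, replace each $\kChart I$ by a neighborhood of the $\iT I$-stratum that is $\delta$-close to breaking along the critical-point sequence of $\iT I$ but stays away from every strictly deeper stratum (the deeper strata being covered by their own charts); since the only trajectories lying in the closures of two incomparable strata lie in the closure of a strictly deeper common refinement, this makes $\kChart I\cap\kChart J=\emptyset$ for incomparable $\iT I,\iT J$ while keeping the cover, and it makes $\breakingBase J(\kChart J\cap\kChart I)$ equal, up to the breaking maps, to the fiber product $\productChartsIntersection K J$ appearing in \ref{k2}. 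The subsequent modifications are $C^1$-small, so this shrinking is done once.

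The last and hardest step is to replace the gluing maps so that \ref{k2}(a)--(b) hold exactly, not merely asymptotically; do this by induction on depth. Suppose the $G_{\iT I'}$ are fixed for all tuples of depth $<d$. For a depth-$d$ tuple $\iT K$ and each way of writing $\iT K$ as an expansion, in some slots, of a shallower tuple $\iT J$ by lower-level tuples $\iT I$ (with each $\iT I$ coming from an already-completed level), the fixed data supply an iterated pregluing $G_{\iT K}^{\mathrm{it}}=G_{\iT J}\circ(G_{\iT I}\times\op{id})$ on the relevant overlap; by the theorem comparing iterated and simultaneous gluing, $G_{\iT K}^{\mathrm{it}}$ differs from the simultaneous $G_{\iT K}$ by a $C^1$-small amount tending to $0$ as the gluing parameters grow, and over mutual overlaps of the various $\iT J$'s the several iterated pregluings already agree, thanks to the depth-$(d{-}1)$ instances of \ref{k2} and the separation \ref{k1}(b). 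One then redefines $G_{\iT K}$ by interpolating — via a cutoff in the gluing parameters $T_1,\dots,T_{n(\iT K)-1}$, beginning at a parameter large enough to keep it a diffeomorphism onto its image — between $G_{\iT K}^{\mathrm{it}}$ near the deep corners and the original $G_{\iT K}$ in the interior, and sets $\breakingBase{\iT K}=i\circ G_{\iT K}^{-1}$, $\obstructionSection{\iT K}=(G_{\iT K}^\sharp)^{-1}\circ\morsedbar\circ G_{\iT K}$, and $\breakingBundle{\iT K}$ accordingly. With this choice \ref{k1}(f)(ii)--(iii) are automatic from the relevant target moduli space being cleanly cut out via Lemma~\ref{lemma:clean-intersection-criterion} (as in Step~3 of the proof of Theorem~\ref{thm: gluing}); \ref{k1}(f)(iv) survives because the change is $C^1$-small and transversality is open; \ref{k1}(f)(v) still holds since the iterated pregluing has the same asymptotics; and \ref{k2}(a)--(b) hold exactly on the corner regions by the displayed identity $\obstructionSectionS_{123}=(G_{12}^\sharp\times\op{id})^{-1}\obstructionSectionS_{43}$ of the Introduction. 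The main obstacle is the coherence of this last step: making the interpolations demanded by all factorizations of every depth-$d$ tuple fit together into single well-defined maps without disturbing the already-fixed shallower maps — which is exactly what the nesting condition \ref{k1}(b) and the precompact exhaustions are engineered to make possible, following the Kuranishi construction in Sections~6--7 of \cite{bao2015semi}.
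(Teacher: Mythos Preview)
Your proposal is essentially correct and follows the same overall strategy as the paper: induction on the energy ordering, apply the simultaneous gluing theorem to build the boundary charts, enforce the separation clause of \ref{k1}(b), and then modify the deepest gluing maps by a $C^1$-small interpolation so that iterated gluing equals simultaneous gluing exactly, after which the obstruction-section compatibility \ref{k2}(b) is automatic from $\obstructionSection{I}=(G_I^\sharp)^{-1}\circ\morsedbar\circ G_I$.

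A couple of points of comparison. First, the paper carries out the construction not directly on the product spaces $\productCharts I$ but at the Kuranishi level inside the ambient Banach manifold $\mathcal B$: it builds the direct-sum bundles $\widetilde{\obstructionBundle}_I$ (your $\productBundle I$ pushed forward by pregluing), thickens to $U_I=\morsedbar^{-1}(\widetilde{\obstructionBundle}_I)$, and then arranges the \emph{bundle nesting} $\obstructionBundle_\ell\subset\widetilde{\obstructionBundle}_{43}\subset\widetilde{\obstructionBundle}_{123}$ by orthogonal projection and interpolation; this is what lets the center-of-mass modification of $\breakingBase{123}$ live on the genuinely overlapping thickened spaces $U_{123}\cap U_{43}$. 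In your framing this nesting is hidden inside the gluing theorems, which is fine, but your sentence ``the bundles are forced: $\obstructionBundle_i=\ker D_u^*$'' is slightly off---in the paper $\obstructionBundle_i$ is only \emph{isomorphic} to the cokernel bundle, deformed near the boundary so as to sit inside the direct-sum bundle. Second, for the separation clause the paper gives a concrete mechanism (travel-time coordinates $\tau_q,\tau_r$ and the explicit open sets of Figures~\ref{fig: M6 small}--\ref{fig: M10}), and it is precisely $\lW_{15}\cap\lW_{43}=\emptyset$ that makes the two required modifications of $\breakingBase{123}$ non-interfering; your abstract ``shrink to neighborhoods of strata avoiding deeper strata'' is the same idea, and your appeal to the depth-$(d{-}1)$ instances of \ref{k2} for comparable $J$'s correctly handles the remaining overlaps.
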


A key new ingredient in the definition/construction of a clean Kuranishi structure is \ref{k2}: iterated gluing equals simultaneous gluing. We achieve this by modifying the breaking map.

\begin{proof}
We will prove this for the prototypical Example~\ref{example: prototypical example}. The general case is only notationally more complex.  

A minimal semi-global Kuranishi structure for $(\overline{\M}_1, \overline{\M}_2, \overline{\M_3})$ consists of only one chart $V_i = \M_i$ for each $i = 1,2,3$. Thus, we have the obstruction bundles $\obstructionBundle_i \to \M_i$ and $\kur_i = \{\chartData{i} = (V_i, \breakingBase{i}, \breakingBundle{i}, \obstructionSection{i})\}$ with $\breakingBase{i}= \op{id}$, $\breakingBundle{i} = \op{id}$, and $\obstructionSection{i} = 0$ for $i = 1,2,3$.
\begin{figure}
    \centering
    \begin{tikzpicture}
        \draw[arrows = {-Latex[width=3pt, length=10pt]}](2,4) node[anchor = south]{$p$}--(0,3) node[near start, left]{$\M_3$};
        \draw[arrows = {-Latex[width=3pt, length=10pt]}](0,3) node[anchor = south]{$q$}--(0,1) node[midway, left]{$\M_2$};
        \draw[arrows = {-Latex[width=3pt, length=10pt]}](0,1) node[anchor = east]{$r$}--(2,0) node[anchor = north]{$s$}  node[midway, left]{$\M_1$};
        \draw[arrows = {-Latex[width=3pt, length=10pt]}, color = red](2,4)--(0,1) node[near start, left]{$\M_5$};
        \draw[arrows = {-Latex[width=3pt, length=10pt]}, color = blue](0,3)--(2,0) node[midway, right]{$\M_4$};
        \draw[arrows = {-Latex[width=3pt, length=10pt]}, color = orange](2,4)--(2,0) node[midway, right]{$\M_6$};
    \end{tikzpicture}
    \caption{Orders of moduli spaces}
    \label{fig: moduli space label}
\end{figure}
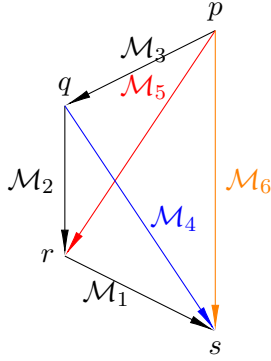

\begin{figure}
\centering
\begin{subfigure}[t]{.45\textwidth}
    \centering
    \begin{tikzpicture}[scale = 1]
        \def \e {0.2};
        % \filldraw[black!5, fill opacity=0.5]{(0,0)--(4,0)--(4,4)--(0,4)}; %big box
        \filldraw[blue!20, fill opacity=0.5]{(0,2)--(2,2)--(2,4)--(0,4)}; % V_123
        \filldraw[red!20, fill opacity=0.5]{(2-2*\e,3)--(4,3)--(4,4)--(2-2*\e,4)}; % V_43
        \filldraw[green!20, fill opacity=0.5]{(0,0)--(1,0)--(1,2+2*\e)--(0,2+2*\e)}; % V_15
        \filldraw[yellow!20, fill opacity=0.5]{(1-\e,0)--(4,0)--(4,3+\e)--(2-\e,3+\e)--(2-\e,2+\e)--(1-\e,2+\e)}; %V_6
        \draw (0.5, 1) node{$\lW_{43}$};
        \draw (3, 3.5) node{$\lW_{15}$};
        \draw (1,3) node{$\lW_{123}$};
        \draw (3,1) node{$\lW_{6}$};
        \draw[arrows = {-Latex[width=3pt, length=6pt]}] (4,4) -- (0,4) node[midway, above]{$\tau_{r}$} node[pos= 1, above]{$\infty$};
        \draw[arrows = {-Latex[width=3pt, length=6pt]}] (0,0) -- (0,4) node[midway, left]{$\tau_{q}$} node[pos= 1, left]{$\infty$};
    \end{tikzpicture}
    \caption{The projection of charts for $\M_6$. $\lW_{43} \cap \lW_{15} = \emptyset$.}
    \label{fig: M6 small}
\end{subfigure}\quad\quad
\begin{subfigure}[t]{.45\textwidth}
    \centering
    \includegraphics[scale = 0.3]{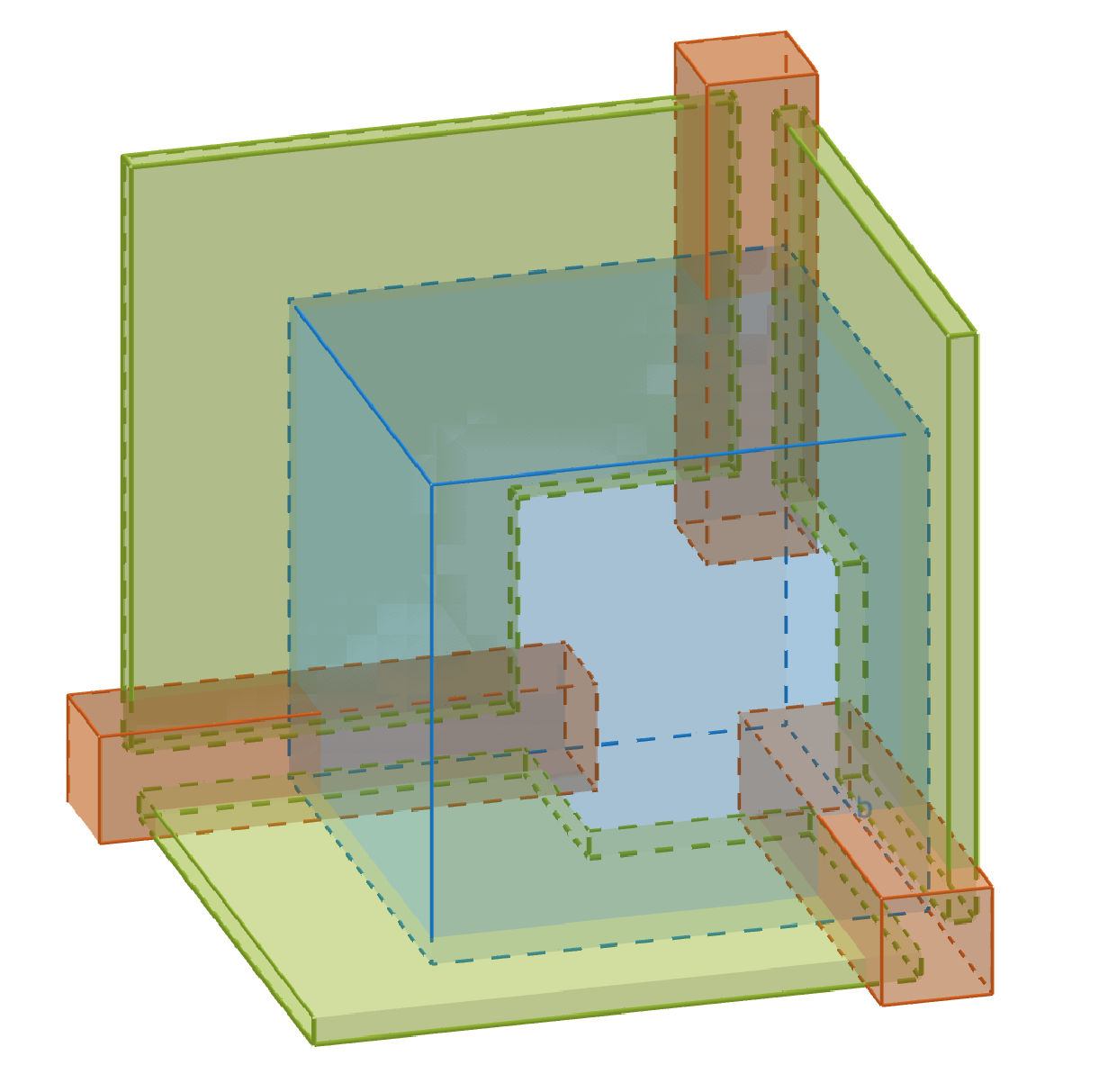}
    \caption{Charts for $\M_\ell$ for some $\ell$, where a smooth trajectory can degenerate into broken trajectories consisting of four pieces. For any $\iT{I}, \iT{J}$ such that $\contraction{I}{I} = \contraction{J}{J} = \ell$, we have $\lW_{I} \cap \lW_{J} = \empty$ if and only if $\iT{I}\leq \iT{J}$ or $\iT{J} \leq \iT{I}$.}
    \label{fig: M10}
\end{subfigure}
\caption{}
\end{figure}
    
We now construct $\kur_4 = \{\chartData{4}, \chartData{12}\}$. Let $\compactSubset_4$ be a large compact subset of $\M_4$, such that any element of $\M_4$ not in $\compactSubset_4$ is $\delta_{12}$-close to breaking into some broken curve in $\M_1 \times \M_2$ for some small fixed $\delta_{12} > 0$. 

Let $\lW_4$ be an open subset in $\mathcal B(q,s)$ that contains $\compactSubset_4$. Let $\lW_{12}$ be an open subset in $\mathcal B(q,s)$ that contains all curves in $\M_4$ that are $\delta_{12}$-close to breaking. Consider the sub-bundle $\obstructionBundle_{12}$ of $\mathcal E_4 = \mathcal E(q,s)$ over $\lW_{12}$ defined in \eqref{eqn: direct sum bundle}. 

We define $\obstructionBundle_4 \to \lW_4$ such that it is isomorphic to the obstruction bundle over $\lW_4$, and $\obstructionBundle_4$ is a sub-bundle of $\obstructionBundle_{12}$ when restricted to the overlap of $\lW_4 \cap \lW_{12}$. This can be achieved by interpolating the obstruction bundle and its orthogonal projection to $\obstructionBundle_{12}$, and shrinking $\lW_{12}$ to avoid the interpolation region while maintaining $\lW_{12} \cup \lW_4 \supset \M_4$. 

Let $\kChart{4} = \lW_4 \cap \M_4$ and $U_{12} = \morsedbar^{-1}(\obstructionBundle_{12})$. Note that $\morsedbar$ is transverse to $\obstructionBundle_{12}$ by Claim~\ref{claim: lambda surjective} and Remark~\ref{remark: surjectivity implies transverse}. By the Kuranishi gluing Claim~\ref{claim: kuranishi gluing}, there exists $R_{12} > 0$ such that we have the following diagram: 
\[
\begin{tikzcd}
    \obstructionBundle_1 \oplus \obstructionBundle_2 \arrow[r,"G_{12}^\sharp"] \arrow[d] & \obstructionBundle_{12} \arrow[d] \arrow[hookleftarrow]{r} & \obstructionBundle_4|_{V_{12}} \arrow[d] \\
    V_1 \times_{R_{12}} V_2 \arrow[r, "G_{12}"] & U_{12} \arrow[u, "\morsedbar", bend left] \arrow[hookleftarrow]{r} & \kChart{12}
\end{tikzcd},
\]
where $G_{12}$ is a diffeomorphism onto its image, 
$G_{12}^\sharp$ is a bundle isomorphism.
By choosing $\delta_{12}>0$ sufficiently small (and hence sufficiently large $\compactSubset_4$), the image of $G_{12}$ contains all curves that are $\delta_{12}$-close to breaking, and hence $\op{im}(G_{12}) \cup \compactSubset_{4} = \M_4$.  

Let $\kChart{12} = \M_4 \cap \op{im}(G_{12})$, $\obstructionSection{12} = (G_{12}^\sharp)^{-1} \circ \morsedbar \circ G_{12}$,
$\breakingBase{12} = G_{12}^{-1}|_{\kChart{12}}$, and $\breakingBundle{12} = (G_{12}^\sharp)^{-1}$. This gives the following diagram:
\[
\begin{tikzcd}
    \productBundle{12} \arrow[d] \arrow[hookleftarrow]{r}{\breakingBundle{12}} & \obstructionBundle_4|_{\kChart{12}} \arrow[d] \\
    \productCharts{12} \arrow[u, bend left, "\obstructionSection{12}"] \arrow[hookleftarrow]{r}{\breakingBase{12}} & \kChart{12}
\end{tikzcd},
\]
where we recall $\productCharts{12} = V_1 \times_{R_{12}} V_2$.

Similarly, $\obstructionBundle_5 \to \M_5$ and $\kur_5 = \{\chartData{5}, \chartData{23}\}$ are constructed. 

Now we construct $\kur_6 = \{\chartData{6}, \chartData{43}, \chartData{15}, \chartData{123}\}$.

First, we will construct open subsets $\lW_6, \lW_{43}, \lW_{15}, \lW_{123} \subset \mathcal B(p,s)$, inside which we will find $\kChart{6}$, $\kChart{43}$, $\kChart{15}$, and $\kChart{123}$, respectively. They need to satisfy the following conditions:
\be
    \item $\cup_{\contraction{I}{I} = 6} \lW_{\iT{I}} \supset \M_6$,
    \item $\lW_{43} \cap \lW_{15} = \emptyset$.
\ee 

To achieve this, fix a small constant $\epsilon > 0$. For any $u \in \M_6$, let $\tau_{r}(u)$ be the amount of time, called travel time,  that $u$ stays inside the ball of radius $\epsilon$ centered at the critical point $r$. 
Similarly, let $\tau_{q}(u)$ be the amount of time that $u$ stays inside the ball of radius $\epsilon$ centered at the critical point $q$.
Let $\lW$ be a small neighborhood of $\M_6$ in $\mathcal B(p,s)$, such that the projection map $\Pi: \lW \to \R^2$ mapping $u$ to $(\tau_q(u), \tau_r(u))$ is defined. Then, we can choose open subsets $\lW_{\iT{I}}$ by taking the pre-image of open subsets in $\R^2$ as indicated in Figure~\ref{fig: M6 small} for $\iT{I} \in  \{6, 43, 15, 123\}$. The requirement for the open subsets is that: for any $\iT{I} \in \{43, 15, 123\}$, $\lW_{\iT{I}}$ is sufficiently small so that the breaking map $\breakingBase{I}$ will be defined over $\lW_{\iT{I}} \cap \M_6$. This is guaranteed by:
$\Pi (\lW_{\iT{I}} \cap \M_6) \subset \op{im}(\Pi \circ f_{\iT{I}})$, where $f_I$ is the pregluing map, defined as in the proof of Theorem~\ref{thm: gluing} over the following regions: $\compactSubset_4 \times_{R_{43}} \compactSubset_3$, $\compactSubset_1 \times_{R_{15}} \compactSubset_5$, and $\compactSubset_1 \times_{R_{123}} \compactSubset_2 \times_{R_{123}} \compactSubset_3$ for $\iT{I} = 43, 15,$ and $123$, respectively. 
Here $R_{\iT{I}}$ are large real numbers so that the pre-gluing maps $f_{\iT{I}}$ are defined, and we choose $R_{43} \gg R_{123}$.
% \begin{align*}
% f_{43}: & \compactSubset_4 \times_{R_{43}} \compactSubset_3 \to \lW \\
% f_{15}: & \compactSubset_1 \times_{R_{15}} \compactSubset_5 \to \lW \\
% f_{123}: & \compactSubset_1 \times_{R_{123}} \compactSubset_2 \times_{R_{123}} \compactSubset_3 \to \lW. 
% \end{align*}
% Let $\compactSubset_6 \subset \M_6 \cap \lW_6$ be a large compact subset so that $\compactSubset_6 \cup \lW_{43} \cup \lW_{15} \cap \lW_{123} \supset \M_6$. 
(For moduli spaces of higher energy that can break into a broken curve consisting of four trajectories, which is the general case outside of the prototypical example, the charts are given by pre-images of the open subsets of $\R^3$ shown in Figure~\ref{fig: M10}.)

% We first define a pregluing map $f: \M_1 \times_{R'} \M_2 \times_{R'} \M_3 \to \lW_{123}$ as in the proof of Theorem~\ref{thm: gluing}, where we choose the compact subsets $\compactSubset_i = \M_i$ for $i = 1, 2, 3$. 
Over the image of the pregluing map $f_{123}$, we define a sub-bundle $\obstructionBundle_{123} \subset \mathcal E|_{\op{im}(f_{123})}$ such that $f_{123}^* \obstructionBundle_{123}$ is isomorphic to $\oplus_{i=1}^3 \op{pr}_i^* \obstructionBundle_i$, where $\op{pr}_i: \compactSubset_1 \times_{R_{123}} \compactSubset_2 \times_{R_{123}} \compactSubset_3 \to \compactSubset_i$ is the projection map. We then extend this bundle over $\lW_{123}$ by parallel transport. Next, we define $U_{123} = \morsedbar^{-1} (\obstructionBundle_{123})$ and $V_{123} = U_{123} \cap \M_6$. By simultaneous gluing, we obtain the upper left parallelogram (the one involving $\breakingBase{123}$ and $\breakingBundle{123}$) in the Diagram~\eqref{eqn: main construction diagram}:

\begin{figure*}[t] 
\begin{equation}\label{eqn: main construction diagram}
\begin{tikzcd}[scale cd = 0.8]
    & & \obstructionBundle_{123} \arrow[d]\arrow[ldd, "\breakingBundle{123}", swap, sloped] \arrow[hookleftarrow]{rdd} & \\
    & & U_{123} \arrow[ldd, "\breakingBase{123}", swap, sloped] \arrow[hookleftarrow]{rdd} \arrow[u, bend left, "\morsedbar"] & \\
    &\productBundle{123} \arrow[d]&   & \obstructionBundle_{43} \arrow[d] \arrow[dr] \arrow[ldd] & \\
    &\productCharts{123} &  &  U_{123} \cap U_{43}  \arrow[hook, dr] \arrow[ldd] & \obstructionBundle_{43} \arrow[d] \arrow[ldd, "\breakingBundle{43}"] \\
    \obstructionBundle_{12}\oplus \obstructionBundle_3 \arrow[d] \arrow[hookleftarrow]{rr} \arrow[ruu, "\breakingBundle{12}\times \op{id}", sloped] &  & \productBundle{43} \arrow[d] \arrow[luu] \arrow[dr] &    &\hspace{1cm} U_{43} \hspace{1cm} \arrow[ldd, "\breakingBase{43}"] \arrow[u, bend right, "\morsedbar", swap]\\
    U_{12}\times_{R_{123}} V_3 \arrow[ruu, "\breakingBase{12}\times \op{id}", sloped, swap ] \arrow[hookleftarrow]{rr} &  & (U_{12}\cap\kChart{4})\times_{R_{43}} \kChart{3} \arrow[luu] \arrow[dr, hook]  & \productBundle{43} \arrow[d] &     \\
    &  &   & \productCharts{43} &   
\end{tikzcd}        
\end{equation}
\end{figure*}
We define the bundle $\obstructionBundle_{43} \to U_{43}$ with the additional requirement that $\obstructionBundle_{43}$ is a sub-bundle of $\obstructionBundle_{123}$ when restricted to $U_{43} \cap U_{123}$.
First, we construct the bundle $\obstructionBundle_{43}$ over $\lW_{43}$ as in the proof of Theorem~\ref{thm: gluing} with $\compactSubset_1 = \M_1$ and $\compactSubset_4$ as defined in the previous steps. Over $\lW_{123} \cap \lW_{43}$, we have two bundles: $\obstructionBundle_{43}$ and $P(\obstructionBundle_{43})$, where $P: \obstructionBundle_{43} \to \obstructionBundle_{123}$ is the orthogonal projection. Note that $\obstructionBundle_{43}$ and $P(\obstructionBundle_{43})$ are arbitrarily close as the travel time $\tau_{q}$ goes to $+\infty$. 
We then shrink $\lW_{123}$ slightly and replace $\obstructionBundle_{43}$ with $P(\obstructionBundle_{43})$ over $\lW_{123} \cap \lW_{43}$, extending it smoothly to the rest of $\lW_{43}$. This completes the construction of $\obstructionBundle_{43}$, ensuring it is a sub-bundle of $\obstructionBundle_{123}$ over $\lW_{123} \cap \lW_{43}$.
Next, we define $U_{43}= \morsedbar^{-1}(\obstructionBundle_{43})$. 
This yields the second parallelogram at the top of Diagram~\ref{eqn: main construction diagram} (the one that involves $\obstructionBundle_{43}\to \obstructionBundle_{123}$).
The inverse of the gluing process provides the parallelogram involving $(\breakingBase{43}, \breakingBundle{43})$ on the lower right of the diagram. 

We now define $\obstructionBundle_6$. 
Let $\compactSubset_6 \subset \M_6 \cap \lW_{6}$ be a large compact subset such that $\compactSubset_6 \cup V_{123} \cup V_{43} \cup V_{15} =\M_6$, and $\compactSubset_6 \subset V_6 \subset \M_6$ be an open subset.
First, we define $\obstructionBundle_6$ over $\kChart{6}$ ensuring it is a sub-bundle of $\obstructionBundle_{123}$ over $\kChart{6} \cap \kChart{123}$. 
Then, we project $\obstructionBundle_6$ to $\obstructionBundle_{43}$ over $\kChart{6} \cap \kChart{43}$, which does not affect the sub-bundle property $\obstructionBundle_6 \subset \obstructionBundle_{123}$ over $\kChart{6} \cap \kChart{123}$ because $\obstructionBundle_{43} \subset \obstructionBundle_{123}$ over $\kChart{43} \cap \kChart{123}$. 

Starting from $U_{123} \cap U_{43}$, there are two ways to reach $\productCharts{123}$: via $\breakingBase{123}$ or via $(\breakingBase{12} \times \op{id}) \circ \breakingBase{43}$, ignoring the inclusion maps. We now modify $\breakingBase{123}$ so that 
\begin{equation} \label{eqn: base commute}
    \breakingBase{123} = (\breakingBase{12} \times \op{id}) \circ \breakingBase{43},
\end{equation}
and further, we modify $\breakingBundle{123}$ so that the associated bundle maps satisfy:
\begin{equation} \label{eqn: bundle commute}
    \breakingBundle{123} = (\breakingBundle{12} \times \op{id}) \circ \breakingBase{43}.
\end{equation}
This can be achieved as follows: we modify the map $\breakingBase{123}$ in a tubular neighborhood of $U_{123} \cap U_{43}$ so that Equation~\eqref{eqn: base commute} holds. The explicit construction uses smooth functions $0 \leq \rho_1, \rho_2 \leq 1$ defined over $U_{123}$ such that $\rho_1 + \rho_2 = 1$ and $\rho_1 = 1$ on $U_{123} \cap U_{43}$. Then, the modified $\breakingBase{123}$ can be defined by taking the center of mass of $\breakingBase{123}$ and $(\breakingBase{12} \times \op{id}) \circ \breakingBase{43}$ using the weights $\rho_1$ and $\rho_2$ (see, for example, \cite{peters1984cheeger} for references on the notion of center of mass in Riemannian geometry).
Similarly, we modify $\breakingBundle{123}$ by taking a linear interpolation, and get Equation~\eqref{eqn: bundle commute}.

We define the obstruction section of $\productBundle{123} \to \productCharts{123}$ by $\obstructionSectionS_{123} = \breakingBundle{123} \circ \morsedbar \circ \breakingBase{123}^{-1}$, and the obstruction section of $\productBundle{43} \to \productCharts{43}$ by $\obstructionSectionS_{43} = \breakingBundle{43} \circ \morsedbar \circ \breakingBase{43}^{-1}$.

By Equations~\eqref{eqn: base commute} and ~\eqref{eqn: bundle commute}, we have 
\begin{equation}
    (\breakingBundle{12} \times \op{id}) \circ \obstructionSection{43} = \obstructionSection{123} \circ (\breakingBase{12} \times \op{id}).
\end{equation}

The obstruction bundle $\obstructionBundle_{15} \to \kChart{15}$ and obstruction section $\obstructionSectionS_{15}$ can be defined similarly. The modification of $(\breakingBase{123}, \breakingBundle{123})$ does not interfere with the modification for $\obstructionBundle_{43} \to \kChart{43}$ because we choose $\lW_{15} \cap \lW_{43} = \emptyset$.

\end{proof}

\begin{proposition}
    Suppose the moduli spaces $\M_1, \M_2, \dots $ are cleanly cut out. Given a minimal semi-global Kuranishi structure $(\Kur_1, \Kur_2, \dots)$, there exists a perturbation section.
\end{proposition}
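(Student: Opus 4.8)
The plan is to construct the compatible section $(\kuranishiSectionBold_1,\kuranishiSectionBold_2,\dots)$ by a double induction: the outer one over the energy ordering $\M_1,\M_2,\dots$, and, for a fixed $\M_\ell$, a secondary one over the partial order $\leq$ on its charts $\{\kChart{I}\mid\contraction{I}{I}=\ell\}$, starting from the interior chart $\interiorKChart{\ell}$ and moving to deeper boundary charts (increasing $n(\iT{I})$). As with the construction of the Kuranishi structure itself, I carry out the argument for the prototypical Example~\ref{example: prototypical example}; the general case is only notationally heavier. For the compact moduli spaces $\M_1,\M_2,\M_3$, which have a single chart with $\obstructionSection{i}=0$, I take a section of $\obstructionBundle_i\to\M_i$ transverse to the zero section and rescale it by a small positive constant; rescaling changes neither the zero set nor its transversality, so \ref{P3} and \ref{P4} hold and \ref{P1}--\ref{P2} are vacuous.

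For the inductive step, assume $\kuranishiSection{J}$ has been built compatibly whenever $\contraction{J}{J}$ has energy $<\ell$. On $\interiorKChart{\ell}$ I pick $\kuranishiSection{(\ell)}$ to be a $C^1$-small transverse section of $\obstructionBundle_\ell$; since $n((\ell))=1$ there is nothing to match and \ref{P2} is vacuous. I then treat the boundary charts $\kChart{I}$, $n(\iT{I})>1$, in order of increasing depth. On each such chart $\kuranishiSection{I}$ is prescribed on two closed pieces of $\productCharts{I}$: (i) on the breaking-map images coming from the already-built $\kuranishiSection{J}$ with $\iT{J}<\iT{I}$, via \ref{P1} (push $\kuranishiSection{J}$ forward by the relevant composition of breaking maps); and (ii) on neighborhoods of the strata where some gluing parameter $T_m\to\infty$, where \ref{P2} forces $\kuranishiSection{I}=(\kuranishiSection{I'},\kuranishiSection{I''})$ with $\kuranishiSection{I'},\kuranishiSection{I''}$ already defined by the outer induction. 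These prescriptions are mutually consistent: contributions from incomparable $\iT{J},\iT{J'}$ have disjoint support because then $\kChart{J}\cap\kChart{J'}=\emptyset$ by \ref{k1}(b); contributions from comparable tuples agree by the secondary induction hypothesis; and a type-(i) prescription agrees with a type-(ii) prescription where they overlap precisely because of \ref{k2}, i.e. equations \eqref{eqn: base commute} and \eqref{eqn: bundle commute} (iterated gluing equals simultaneous gluing). Over the complement of the prescribed set I extend $\kuranishiSection{I}$ by a $C^1$-small section chosen so that $\obstructionSection{I}+\kuranishiSection{I}$ is transverse to the zero section: near the breaking images this is done exactly as in Section~\ref{section: construction of a perturbation section}, cutting $\kuranishiSection{(\ell)}$ (or the relevant $\kuranishiSection{J}$) off with a bump function into the subbundle $\breakingBundle{I}(\obstructionBundle_\ell)$ and taking it to be zero in the complementary directions, so that transversality holds there because $\obstructionSection{I}$ is transverse to $\breakingBundle{I}$ (\ref{k1}(f)(iv)); a final generic small perturbation on the remaining ``gap'' regions gives \ref{P4} everywhere while keeping everything $C^1$-small, i.e. \ref{P3}. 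Feeding $(\kuranishiSectionBold_1,\kuranishiSectionBold_2,\dots)$ into Lemma~\ref{lemma: 0 dim moduli space} then yields the conclusion.

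The main obstacle is the consistency of the prescribed values on a boundary chart, precisely at the locus where a breaking-map image meets a gluing-parameter-infinity stratum: a priori, pushing a lower-level section forward through a breaking map and then splitting off a factor need not equal splitting first and then pushing forward, and the iterated-vs.-simultaneous gluing theorem only gives agreement asymptotically as the gluing parameters diverge. It is exactly condition \ref{k2}---iterated gluing equals simultaneous gluing, built into the structure through \eqref{eqn: base commute} and \eqref{eqn: bundle commute}---that promotes this to genuine equality and makes a compatible section exist; without it one could only build a section compatible up to errors vanishing at infinity. By comparison, reconciling transversality \ref{P4} with $C^1$-smallness \ref{P3} is routine: since each $\obstructionSection{I}$ meets the zero section cleanly (\ref{k1}(f)(ii)) and is transverse to $\breakingBundle{I}$ (\ref{k1}(f)(iv)), an arbitrarily $C^1$-small perturbation supported in the obstruction directions resolves the clean intersection, just as in the upright-torus computation of Example~\ref{example: upright torus} and in Section~\ref{section: construction of a perturbation section}.
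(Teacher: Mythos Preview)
Your plan is correct and follows essentially the same approach as the paper: both work the prototypical example by induction on energy and on chart depth, prescribe $\kuranishiSection{I}$ near breaking images via the bump-function construction of Section~\ref{section: construction of a perturbation section} and near $T_m\to\infty$ by splitting into lower-energy pieces, and use \ref{k1}(b) and \ref{k2} to reconcile the prescriptions before filling in the remaining region generically. One small sharpening: the agreement of type-(i) prescriptions coming from \emph{comparable} $\iT{J}<\iT{J'}<\iT{I}$ requires not just the secondary induction hypothesis but also \ref{k2} (to identify the two composite breaking maps into $\productCharts{I}$), exactly as in the paper's check at the region $X$; and the paper also notes that achieving \ref{P3} on a deep chart may force one to go back and rescale earlier $\sigma_i$, which your ``rescale by a small positive constant'' remark handles but is worth making explicit.
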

\begin{proof}
    We prove this for the prototypical Example~\ref{example: prototypical example}. The general case is a straightforward generalization with more complex notation.
    The minimal semi-global Kuranishi structure for $\overline{\M}_i$ consists of only one chart $V_i = \M_i$ for $i = 1,2,3$. 
    We choose sections $\sigma_i: \M_i \to \obstructionBundle_i$ that are transverse to the zero sections.

    The minimal semi-global Kuranishi chart for $\overline{\M}_4$ consists of two charts $V_4$ and $V_{12}$; the minimal semi-global Kuranishi chart for $\overline{\M}_5$ consists of two charts $V_5$ and $V_{23}$.
    The minimal semi-global Kuranishi chart for $\overline{\M}_6$ consists of four charts $V_6, V_{15}, V_{43}, V_{123}$.

    Now we construct perturbation sections for $\M_4$, $\M_5$, and $\M_6$:
    
    \noindent{} ($\bs{\overline{\M}_5}$). First, we construct a perturbation section for $\M_5$. This part is essentially the same as the construction in Section~\ref{section: perturbation section}.
    We choose a section $\sigma_5: V_5 \to \obstructionBundle_5$ that is transverse to the zero section.
    Next, we choose a section $\sigma_{23}: \productCharts{23} \to \productBundle{23}$ as follows (recall $\V_{23} = V_2 \times_{R_{23}} V_3$ and $\productBundle{23} = \obstructionBundle_2 \oplus \obstructionBundle_3$): 
    
    Choose a small constant $\epsilon_1 > 0$.
    \begin{enumerate}[label=($\sigma_{23}$-\alph*)]
        \item Over the region $\{R_{23} \leq T \leq R_{23} + \epsilon_1 \}$, we choose a tubular neighborhood of $\op{im}(\breakingBase{23})$ with coordinates $(x_5,y_{23}^5)$, 
        where $x_5$ is the coordinate of $\op{im}(\breakingBase{23})$ and $y_{23}^5$ is the coordinate for the normal direction,
        and define a section $\sigma_{23}(x_5,y_{23}^5) = (\beta_{23}^5(y_{23}^5)\sigma_5(x_5), 0) \in \obstructionBundle_5 \oplus \obstructionBundle_5^\perp = \productBundle{23}.$

        \item Choose $R_{23}'\gg R_{23}$. Over the region $\{T \geq R_{23}'\}$, we define $\sigma_{23} = (\sigma_2, \sigma_3)$.

        \item Over the region $\{ R_{23} + \epsilon_1 \leq T \leq R_{23}' \}$, we define $\sigma_{23}$ to be an arbitrary section such that $\sigma_{23} + \obstructionSectionS_{23}$ is transverse to the zero section.

    \end{enumerate}
    
    \noindent{}($\bs{\overline{\M}_4}$). The sections $\sigma_4$ and $\sigma_{12}$ are constructed in the same way. 
    
    \noindent{}($\bs{\overline{\M}_6}$). We choose a section $\sigma_6: V_6 \to \obstructionBundle_6$ that is transverse to the zero section.
    Now we define sections $\sigma_{43}$, $\sigma_{15}$, and $\sigma_{123}$:
    
    For $\sigma_{15}: \productCharts{15} \to \productBundle{15}$, choose $\epsilon_2$ such that $0 < \epsilon_2 < \epsilon_1$ and increase $R'_{12}$, if necessary, such that $R'_{12} \gg R_{15} \gg R_{12}$.
    \begin{enumerate}[label=($\sigma_{15}$-\alph*)]
        \item \label{sigma15a} Over the region $\{ R_{15} \leq T_r \leq R_{15} + \epsilon_2\}$ (see Figure~\ref{fig: M6}), we choose a tubular neighborhood of $\op{im}(\breakingBase{15})$ with coordinates $(x_6, y_{15}^6)$, and define the section $\sigma_{15} (x_6, y_{15}^6) = (\beta_{15}^6(y_{15})\sigma_6(x_6), 0)$.
        
        \item Over the region $\{ T_r \geq R_{12}'\}$, we define $\sigma_{15} = (\sigma_1, \sigma_5)$. 

        \item Over the rest of $\productCharts{15}$, we define $\sigma_{15}$ to be an arbitrary section such that $\sigma_{15} + \obstructionSectionS_{15}$ is transverse to the zero section.
    \end{enumerate} 

    The section $\sigma_{43}$ can be defined similarly.

    For $\sigma_{123}$: 
    \begin{enumerate}[label=($\sigma_{123}$-\alph*)]
        \item \label{sigma123a} Over the region $\{R_{23} \leq T_q \leq R_{43} + \epsilon_2, R_{12} \leq T_r \leq R_{12} + \epsilon_2\} \cup \{R_{12} \leq T_r \leq R_{15} + \epsilon_2, R_{23} \leq T_q \leq R_{23} + \epsilon_2\}$, shown as the ``intersection" of $\V_{123}$ and $\V_6$ in Figure~\ref{fig: M6}, we choose a tubular neighborhood of $\op{im}(B_{123})$ with coordinates $(x_6, y_{123}^6)$, 
        and define $$\sigma_{123}(x_6, y_{123}^6) = (\beta_{123}^6(y_{123})\sigma_6(x_6), 0)\in \obstructionBundle_6 \oplus \obstructionBundle_6^\perp = \productBundle{123}.$$

        \item \label{sigma123b} Over the region $\{R_{23} \leq T_q \leq R_{23} + \epsilon_1, T_r \geq R_{15}\}$, the ``intersection" between $\V_{15}$ and $\V_{123}$, we choose a tubular neighborhood of $\op{im}(\op{id} \times B_{23})$ with coordinate $(x_{15}, y_{123}^{15})$ (recall $\op{id}\times B_{23}$ maps a subset of $V_{15} = V_1 \times_{R_{15}} V_5$ into $V_1 \times_{R_{123}} V_2 \times_{R_{123}} V_3 = \productCharts{123}$) and define the section 
        $$\sigma_{123}(x_{15}, y_{123}^{15}) = (\beta_{123}^{15}(y_{123}^{15})\sigma_{15}(x_{15}),0) \in \productBundle{15} \oplus \productBundle{15}^\perp  = \productBundle{123}.$$

        \item \label{sigma123c} Over the region $\{R_{12} \leq T_r \leq R_{12} + \epsilon_1, T_q \geq R_{43}\}$, the ``intersection" between $\productCharts{43}$ and $\V_{123}$, we choose a tubular neighborhood of $\op{im}(\breakingBase{12}\times \op{id})$ with coordinates $(x_{43}, y_{123}^{43})$ and define 
        $$\sigma_{123}^{43}(x_{43}, y_{123}^{43}) = (\beta_{123}^{43}(y_{123}^{43})\sigma_{43}(x_{43}), 0)\in \productBundle{43} \oplus \productBundle{43}^\perp = \productBundle{123}.$$

        \item \label{sigma123d} Over the region $\{T_r \geq R_{12}'\}$, we choose $\sigma_{123} = (\sigma_1, \sigma_{23})$.

        \item \label{sigma123e} Over the region $\{T_q \geq R'_{23}\}$, we choose $\sigma_{123} = (\sigma_{12}, \sigma_3)$.

        \item \label{sigma123f} Over the rest of $\productCharts{123}$, we define $\sigma_{123}$ arbitrarily so that $\sigma_{123} + \obstructionSectionS_{123}$ is transverse to the zero section.

    \end{enumerate}

    	Compatibility between the above \ref{sigma123a}, \ref{sigma123b}, and \ref{sigma15a}: in their overlap, 
	$\{R_{15} \leq T_r \leq R_{15} + \epsilon_2, R_{23} \leq T_q \leq R_{23} + \epsilon_2\}$ (marked as $X$ in Figure~\ref{fig: M6}),
	from Definitions \ref{sigma123b} and \ref{sigma15a} and $x_{15} = (x_6, y_{15}^6)$, we get 
	$$\sigma_{123}(x_{15}, y_{123}^{15}) = (\beta_{123}^{15}(y_{123}^{15})\sigma_{15}(x_{15}),0) = (\beta_{123}^{15}(y_{123}^{15})\beta_{15}^6(y_{15}^6)\sigma_6(x_6), 0).$$
	Now we compare with Definition~\ref{sigma123a}.
	Since iterated gluing = simultaneous gluing, we have $(\breakingBase{23}\times \op{id}) \circ \breakingBase{15} = \breakingBase{123}$.
	Hence, we can choose coordinates for $\productCharts{123}$ in that region, so that $y_{123}^6 = (y_{15}^6, y_{123}^{15})$ and thus require that the choice of $\beta_{123}^6$ satisfies $\beta_{123}^6(y_{15}^6, y_{123}^{15}) = \beta_{123}^{15}(y_{123}^{15})\beta_{15}^6(y_{15}^6)$.

	Compatibility between \ref{sigma123d} and \ref{sigma123e}: over their overlapping region, $\sigma_{23} = (\sigma_2, \sigma_3)$ and $\sigma_{12} = (\sigma_1, \sigma_2).$

	Now over each chart $\V_{\iT{I}}$, we have an obstruction section $\obstructionSectionS_{\iT{I}}$. 
	By construction, the obstruction sections satisfy the following compatibility conditions and automatically patch together:
	\begin{enumerate}
	    \item $\obstructionSectionS_{i} = 0,$ for $i = 1, 2, \dots, 6$.
	    \item $\obstructionSection{12} \circ \breakingBase{12}=0$.
	    \item $\obstructionSection{23} \circ \breakingBase{23}=0$.
	    \item $\obstructionSectionS_{15} \circ \breakingBase{15} = 0$.
	    \item $\obstructionSectionS_{43} \circ \breakingBase{43} = 0$.
	    \item $\obstructionSectionS_{123} \circ (\op{id} \times \breakingBase{23}) = (\op{id}\times \breakingBundle{23}) \circ \obstructionSectionS_{15}$.
	    \item $\obstructionSectionS_{123} \circ (\breakingBase{12} \times \op{id}) = (\breakingBundle{23} \times \op{id}) \circ \obstructionSectionS_{43}$.
	\end{enumerate}

	Now we check \ref{P3} and \ref{P4} for $\breakingBase{15}$, $\breakingBase{123}$, and $\op{id}\times \breakingBase{23}$.  
	For $\breakingBase{15}$ and $\breakingBase{123}$, as in the construction in Section~\ref{section: construction of a perturbation section}, \ref{P3} and \ref{P4} can be achieved by making $\sigma_6$ small in $C^1$-norm.
	For the map $\op{id} \times \breakingBase{23}$, 
	we extend the section $(\op{id} \times \breakingBase{23}) \circ (\sigma_{15} + \obstructionSectionS_{15})$ defined in $\V_{15}$ to the section $\sigma_{123} + \obstructionSectionS_{123}$ defined in $\V_{123}$.
	Consider the region $\mathcal D = \{R_{23} \leq T_q \leq R_{23} +\epsilon_2, T_r \geq R_{15}\}$. 
	Note that
	\begin{enumerate}
	    \item $\obstructionSectionS_{123}$ converges to $(0, \obstructionSectionS_{23})$ in $C^1$ as $T_r \to \infty.$ 
	    \item $\obstructionSectionS_{23}$ is transverse to $\breakingBundle{23}(\obstructionBundle_5)$ inside $\productBundle{23}$.
	\end{enumerate}
	Therefore, $\obstructionSectionS_{123}$ is transverse to $(\op{id}\times \breakingBundle{23})(\obstructionBundle_1 \oplus \obstructionBundle_5)$ inside $\productBundle{123}$ over $\mathcal D$.
	Finally, by choosing $\sigma_{15}$ small, for which we may have to re-choose $\sigma_1$ and $\sigma_5$, we can choose $\sigma_{123}$ small in $C^1$-norm. This implies \ref{P3} and \ref{P4}.    

	Therefore, $\kuranishiSection = \{\kuranishiSection{I}\}_I$ forms a perturbation section.

    % For each index tuple $$\iT{I} \in \{1,2,3,4,5,6, (1,2), (2,3), (1,5), (3,4),(1,2,3) \},$$ the perturbed moduli space is a subspace of $\productCharts{I}{R}$ given by $(\sigma_I + \obstructionSectionS_I)^{-1}(0)$.
    % For each $i\in \{1,2,\dots, 6\}$, the perturbed moduli space is given by  $$\coprod_{\contraction{I}{I} = i}(\sigma_I + \obstructionSectionS_I)^{-1}(0)/\sim,$$
    % where $\sim$ is the given by \ref{k2}.
\end{proof}

\begin{figure}
    \centering
    \begin{tikzpicture}
        \draw[->, thick, red, opacity=0.5](5,0) -- (0,0);
        \draw[blue, thick, opacity=0.5] (10,0) -- (4,0);
        \draw (7,-0.5) node{\tiny $\V_5$};
        \draw (2, -0.5) node{\tiny $\V_{23}$};
        \filldraw[black] (5,0) circle (1pt) node[anchor=south]{\tiny $R_{23}$};
        \filldraw[black] (4, 0) circle (1pt) node[anchor=south]{\tiny $R_{23} + \epsilon_1$};
        \filldraw[black] (2, 0) circle (1pt) node[anchor=south]{\tiny $R'_{23}$};
        % \draw (8,0) node[anchor = south]{$T_{23}$};
    \end{tikzpicture}
    \caption{Chart for $\M_5$}
    \label{fig:chart for M5}
\end{figure}

\begin{figure}
    \centering
    \begin{tikzpicture}[scale = 0.65]
        \filldraw[black!5, fill opacity=0.5]{(0,0)--(10,0)--(10,10)--(0,10)}; %big box
        \filldraw[blue!20, fill opacity=0.5]{(0,5)--(5,5)--(5,10)--(0,10)}; % V_123
        \filldraw[green!20, fill opacity=0.5]{(0,0)--(1,0)--(1,6.5)--(0,6.5)}; % V_43
        \filldraw[red!20, fill opacity=0.5]{(3.5,9)--(10,9)--(10,10)--(3.5,10)}; % V_15
        \def \e {0.4};
        \filldraw[yellow!20, fill opacity=0.5]{(1-\e,0)--(10,0)--(10,9+\e)--(5-\e,9+\e)--(5-\e, 5+\e)--(1-\e, 5+\e)};
        \draw (0,10) node[anchor=east]{\tiny $\infty$};
        \draw (0,10) node[anchor=west, rotate = 90]{\tiny $\infty$};
        \draw (8, 2) node{$\V_6$};
        \draw (3, 7) node{$\V_{123}$};
        \draw (0.5, 4) node{$\V_{43}$};
        \draw (6, 9.5) node{$\V_{15}$};
        \draw (4.8, 9.2) node{\tiny $X$};
        \draw[arrows = {-Latex[width=3pt, length=6pt]}] (10,10) -- (0,10);
        \draw (7, 10) node[anchor = west, rotate = 90]{$T_q$};
        \draw[arrows = {-Latex[width=3pt, length=6pt]}] (0,0) -- (0,10);
        \filldraw[black] (0,5) circle (1pt) node[anchor=east]{\tiny $R_{12}$};
        \filldraw[black] (0,6.5) circle (1pt) node[anchor=east]{\tiny $R_{12} + \epsilon_1$};
        \filldraw[black] (0,5+\e) circle (1pt) node[anchor=east]{\tiny $R_{12}+\epsilon_2$};
        \filldraw[black] (5, 10) circle (1pt) node[anchor=west, rotate = 90]{\tiny $R_{23}$};
        \filldraw[black] (3.5, 10) circle (1pt) node[anchor=west,rotate=90]{\tiny $R_{23}+\epsilon_1$};
        \filldraw[black] (5-\e, 10) circle (1pt) node[anchor=west, rotate = 90]{\tiny $R_{23}+\epsilon_2$};
        \draw (0,3) node[anchor = east]{$T_r$};
        \filldraw[black] (1,10) circle (1pt) node[anchor=west, rotate = 90]{\tiny $R_{43}$};
        \filldraw[black] (0,9) circle (1pt) node[anchor=east]{\tiny $R_{15}$};
        % \filldraw[black] (0.5,10) circle (1pt) node[anchor=south]{$R+L$};
        % \filldraw[black] (0,9.5) circle (1pt) node[anchor=east]{$R+L$};
    \end{tikzpicture}
    \caption{A corner chart for $\M_6$}
    \label{fig: M6}
\end{figure}
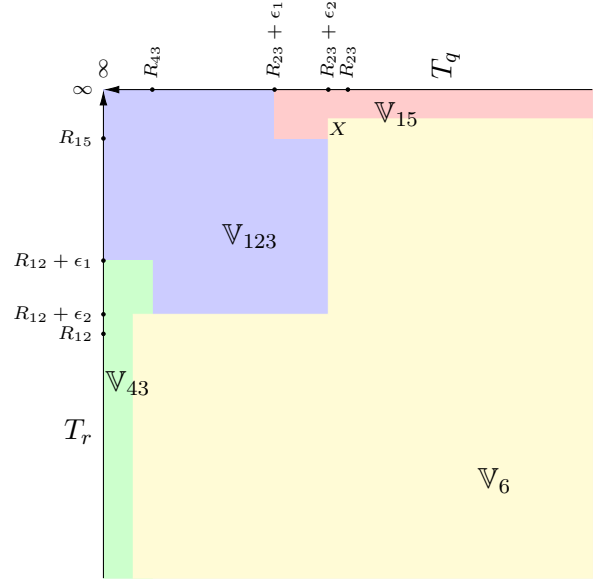

% \todo{add an example of $\partial^2 = 0$ in the upright torus case, emphasis on the fact that $V_5 = \emptyset$ but $V_{23}$ is not, and hence $\sigma_{23}$ exists}

\printbibliography
\end{document}